\newtheorem{theorem}{Theorem}[section]
\newtheorem{proposition}[theorem]{Proposition}
\newtheorem{lemma}[theorem]{Lemma}
\newtheorem{corollary}[theorem]{Corollary}
\newtheorem{remark}[theorem]{Remark}
\newtheorem{definition}[theorem]{Definition}
\newtheorem{example}[theorem]{Example}
\newtheorem{assumption}{\textbf{H}\hspace{-2pt}}
\Crefname{assumption}{\textbf{H}\hspace{-2pt}}{\textbf{H}\hspace{-2pt}}
\crefname{assumption}{\textbf{H}}{\textbf{H}}
\newcommand{\coint}[1]{\left[#1\right)}
\newcommand{\ocint}[1]{\left(#1\right]}
\newcommand{\ooint}[1]{\left(#1\right)}
\newcommand{\ccint}[1]{\left[#1\right]}
\def\E{\mathbb{E}}
\def\P{\mathbb{P}}
\def\1{\mathds{1}}
\def\eqdef{:=}
\def\Uset{\mathsf{U}}
\def\Usigma{\mathcal{U}}
\def\Rset{\mathsf{S}}
\def\Rsigma{\mathcal{S}}
\def\rme{\mathrm{e}}
\def\rmd{\mathrm{d}}
\def\nset{\mathbb{N}}
\def\rset{\mathbb{R}}
\def\cmoment{c_{0}}
\def\cmomentone{c_{1}}
\newcommand{\indi}[1]{\mathbbm{1}_{{#1}}}
\newcommand{\ps}[2]{\left\langle #1, #2 \right\rangle}
\newcommandx{\as}[1][1=P]{\ensuremath{#1\, -\mathrm{a.s.}}}
\newcommand{\fraca}[2]{#1/#2}
\def\param{\theta}
\newcommandx{\set}[2]{\{ {#1} \, ; \, {#2}\}}
\newcommand{\boule}[1]{\operatorname{B}_{#1}}
\newcommandx{\CPE}[3][1=]{{\mathbb E}^{#1}\left[\left. #2 \, \right| #3 \right]} 
\def\bx{\mathbf{x}}
\def\Param{\Theta}
\newcommandx{\tgenerator}[4][1=\lambda,2=\beta,3=p,4=\theta]{\bar{\operatorname{L}}^{#1}_{#2} V_{#3}(#4)}
\newcommandx{\generator}[5][1=\lambda,2=\beta,3=x,4=p,5=\theta]{\operatorname{L}^{#1}_{#2,#3} V_{#4}(#5)}
\newcommand{\tzeta}[5]{\tilde{Y}^{#1}_{#3,#2}(#4,#5)}
\newcommand{\tZ}[4]{L^{#1}_{#3,#2}(#4)}
\newcommand{\bZ}[3]{\overline{L}^{#1}_{#2,#3}}
\newcommandx{\tU}[3][1=n,2=t,3=\lambda]{U_{#1,#2}^{#3}}
\newcommandx{\tD}[3][1=n,2=t,3=\bx]{\Delta_{#1,#2}(#3)} 
\def\bW{\mathbf{W}}
\begin{document}


\title{On stochastic gradient Langevin dynamics with dependent data streams:
the fully non-convex case
\thanks{All the authors were supported by The Alan Turing Institute, London under the EPSRC grant EP/N510129/1. N. H. C. and M. R. also enjoyed the support of the NKFIH (National Research, Development and Innovation Office, Hungary) grant KH 126505 and the ``Lend\"ulet'' grant LP 2015-6 of the Hungarian Academy of Sciences. Y. Z. was supported by The Maxwell Institute Graduate School in Analysis and its Applications, a Centre for Doctoral Training funded by the UK Engineering and Physical Sciences Research Council (grant EP/L016508/01), the Scottish Funding Council, Heriot-Watt University and the University of Edinburgh. We thank the Alan Turing Institute, London, UK; the R\'enyi Institute, Budapest, Hungary and the \'Ecole Polytechnique, Palaiseau, France for hosting research meetings of the authors.}}

\author{N. H. Chau\thanks{Osaka University, Japan.} \and
\'E. Moulines\thanks{Centre de Math\'ematiques Appliqu\'ees, UMR 7641, Ecole Polytechnique, France}
\and M. R\'asonyi\thanks{Alfr\'ed R\'enyi Institute of Mathematics, 1053 Budapest, Re\'altanoda utca 13--15,
Hungary\newline E-mail: rasonyi.miklos@renyi.hu} \and S. Sabanis\thanks{School of Mathematics, The University of Edinburgh and
The Alan Turing Institute, UK.} \and Y. Zhang\thanks{School of Mathematics, The University of Edinburgh, UK.}}

\date{\today}

\maketitle

\begin{abstract}
We consider the problem of sampling from a target distribution, which is \emph
{not necessarily logconcave}, in the context of empirical risk minimization and stochastic optimization as
presented in \cite{raginsky2017non}. Non-asymptotic analysis results are established in the
$L^1$-Wasserstein distance for the behaviour of Stochastic Gradient Langevin Dynamics (SGLD)
algorithms. We allow the estimation of gradients to be performed even in the presence of \emph{dependent} data streams.
Our convergence estimates are sharper and \emph{uniform} in the number of iterations, in contrast to those
in previous studies.
\end{abstract}

\smallskip

\noindent\textbf{Keywords:} stochastic gradient, Langevin dynamics, convergence guarantees, non-convex optimization, contraction estimates
for diffusions

\smallskip

\noindent\textbf{MSC2020 classification:} 65C05, 62L10, 93E35

\section{Introduction}
In this paper, the problem of approximate sampling from a target distribution
\begin{equation}
\label{eq:definition-pi-beta}
\pi_{\beta}(\theta) \wasypropto \exp(-\beta U(\theta)) \rmd \theta
\end{equation}
is investigated, where $\theta \in \rset^d$, $\beta>0$, and the function $U:\rset^d \to \rset_{+}$ is differentiable, $\nabla U$ is Lipshitz-continuous, and $U$ satisfies a certain dissipativity condition.
If $U$ has a unique minimizer $\theta^{*}$ then sampling from \eqref{eq:definition-pi-beta} with a large $\beta${}
amounts to finding $\theta^{*}$.

It is well-known that \eqref{eq:definition-pi-beta} is the stationary law of the Langevin
stochastic differential equation
\begin{equation}\label{sde}
\rmd L_t = - \nabla U(L_t) \rmd t + \sqrt{2\beta^{-1}} \rmd B_t,
\end{equation}
where $B$ is a the standard Brownian motion in $\rset^d$ and $\beta>0$ is the so-called inverse temperature parameter.
Euler discretizations of \eqref{sde} lead to the extensively studied unadjusted Langevin algorithm. When only estimates for
the gradient $\nabla U$ are available, we arrive at the
Stochastic Gradient Langevin Dynamics (SGLD) algorithm (\eqref{nab} below), introduced in \cite{welling:teh:2011}, which is the focus of
our interests in the present article.

Imagine that we wish to tune the parameter $\theta$ of some software optimally so as to minimize $U(\theta)${}
which is the expectation of a given cost function depending on $\theta$ and on an observed random data
sequence whose law is unknown (and might slowly change over time). In such a situation our optimization must be data-driven
and one may use e.g.\ fixed gain stochastic gradient algorithms, see \cite{chau:kumar:rasonyi:sabanis:2019}.
In a nonconvex setting, however, there can be several local minima.
By injecting extra noise, SGLD is a powerful tool for solving such problems, see Section \ref{sec:application} below
for a more thorough discussion.


For an i.i.d.\ data sequence, the remarkable study \cite{raginsky2017non} provided theoretical guarantees in the form
of non-asymptotic convergence estimates for SGLD in the quadratic Wasserstein distance.
The purpose of the present paper is to significantly sharpen these estimates by providing optimal
rates in terms of the stepsize, using another metric, for the first time in the literature. We refer to
\cite{raginsky2017non} for further details about this method of optimization
in the big data context. We stress, however, the applicability of SGLD also in the context of online parameter optimization
where dependent data is commonly encountered.

Non-asymptotic convergence rates of Langevin dynamics based algorithms for approximate sampling of
log-concave distributions have been intensively studied in recent years,
starting with \cite{dalalyan:tsybakov:2012,dalalyan:2017}. This was followed by
\cite{dalalyan2019user,durmus:moulines:highdimULA,durmus:moulines:2017,cheng2018convergence, Brosse2017Tamed}  amongst others.

Relaxing log-concavity is a more challenging problem. In \cite{majka2018non}, the log-concavity assumption is
replaced by a ``monotonicity at infinity'' condition, convergence rates are obtained
in $L^1$- and $L^2$-Wasserstein distances.
In a similar setting, \cite{cheng2018sharp} analyzes sampling errors in the $L^1$-Wasserstein distance for
both overdamped and underdamped Langevin MCMC.

Our starting point is \cite{raginsky2017non}, where a dissipativity condition is assumed and convergence rates are obtained in
the $L^2$-Wasserstein distance. Moreover, a clear and strong link between sampling via SGLD algorithms and non-convex
optimization is highlighted. One can further consult \cite{xu2018global,dalalyan2017further} and references therein.

In the present paper, we impose the same dissipativity condition as in \cite{raginsky2017non}. Using the $L^1$-Wasserstein
metric, we obtain sharper estimates and allow for possibly dependent data sequences. The key new idea
is comparing the SGLD algorithm to a suitable auxiliary continuous time processes inspired by (\ref{sde}) and
then relying on contraction results developed in \cite{eberle:guillin:zimmer:Trans:2019} for \eqref{sde}.

\paragraph{Notations and conventions.} Let $(\Omega,\mathcal{F},\P)$ be a probability space.
We denote by $\E[X]$  the expectation of a random variable $X$.
For $1\leq p<\infty$, $L^p$ is used to denote the usual space of $p$-integrable real-valued random variables.
Fix an integer $d\geq 1$. For an $\rset^d$-valued random variable $X$, its law on $\mathcal{B}(\rset^d)$ (the Borel sigma-algebra of $\rset^d$) is denoted by $\mathcal{L}(X)$. Scalar product is denoted
by $\ps{\cdot}{\cdot}$, with $|\cdot|$ standing for the
corresponding norm (where the dimension of the space may vary depending on the context).
For  $r \in \rset_+$, denote by $\boule{r}$ the closed ball centered at $0$  with radius $r$.

For any integer $q \geq 1$, let $\mathcal{P}(\rset^q)$ denote the set of
probability measures on $\mathcal{B}(\rset^q)$.
For $\mu\in\mathcal{P}(\rset^d)$ and for a non-negative measurable
$f:\rset^d\to\rset$, we denote $\mu(f):=\int_{\rset^d} f(\theta)\mu(\rmd \theta)$.

For $\mu,\nu\in\mathcal{P}(\rset^d)$,
let $\mathcal{C}(\mu,\nu)$ denote the set of probability measures $\zeta$
on $\mathcal{B}(\rset^{2d})$ such that its respective marginals are $\mu,\nu$. Define, for $p\geq 1$,
\begin{equation}
\label{eq:definition-W-1}
W_p(\mu,\nu):=\left(
\inf_{\zeta\in\mathcal{C}(\mu,\nu)}\int_{\rset^d}\int_{\rset^d}|\theta-\theta'|^p\zeta(\rmd \theta \rmd \theta')\right)^{1/p},
\end{equation}
which is the $L^p$-Wasserstein distance associated to the Euclidean distance. We consider below only the cases $p=1,2$.

\section{Main results}


Fix an $\rset^d$-valued random variable $\theta_0$, representing the initial value of
the procedure we consider.
Let $(\mathcal{G}_n)_{n \in \nset}$ be a given filtration representing the flow of past information.
The notation $\mathcal{G}_{\infty}$ is self-explanatory.
Let $(X_n)_{n \in \nset}$ be a $(\mathcal{G}_n)$-adapted process.
Let furthermore $(\mathcal{G}^{+}_n)_{n \in \nset}$  be a decreasing sequence of $\sigma$-fields which represent
the future information at the respective time instants. We assume in the sequel that for each $n \in \nset$,
the $\sigma$-fields $\mathcal{G}_n$ and $\mathcal{G}_n^+$ are independent.

Fix $\beta>0$. For each $\lambda>0$, define the $\rset^d$-valued
random process $(\theta^{\lambda}_n)_{n \in \nset}$ by recursion:
\begin{equation}\label{nab}
\theta^{\lambda}_0:=\theta_0,\quad \theta^{\lambda}_{n+1}:=\theta^{\lambda}_n-\lambda H(\theta^{\lambda}_n,X_{n})+\{ 2 \lambda \beta^{-1} \}^{1/2} \, \xi_{n+1},\ n\in\nset,
\end{equation}
where $H:\rset^d\times\rset^m\to\rset^d$ is a measurable
function and $(\xi_n)_{n\in\nset}$ is an independent sequence of standard $d$-dimensional Gaussian random variables.

We interpret $(X_n)_{n\in\nset}$ as a stream of data and $(\xi_n)_{n\in\nset}$ as an artificially
generated noise sequence.
We assume throughout the paper that $\theta_0$, $\mathcal{G}_{\infty}$ and $(\xi_{n})_{n\in\nset}$
are independent.

Let $U:\rset^d\to\rset_+$ be continuously differentiable
with gradient $h:=\nabla U$.
Let us define the probability
\[
\pi_{\beta}(A):=\frac{\int_A \rme^{-\beta U(\theta)}\, \rmd \theta}{\int_{\rset^d} \rme^{-\beta U(\theta)}\, \rmd \theta},\
A\in\mathcal{B}(\rset^d).
\]
It is implicitly assumed that $\int_{\rset^d} \rme^{-\beta U(\theta)}\, \rmd \theta<\infty$ and this is
indeed the case under \Cref{assum:dissipativity} below, as easily seen.
Our objective is to (approximately) sample from the distribution $\pi_{\beta}$ using the
scheme \eqref{nab}.

We now present our assumptions. First, the moments of the initial condition need to
be controlled.

\begin{assumption}\label{imit}
$ |\theta_0|\in\bigcap_{p\geq 1} L^p$.
\end{assumption}

Next, we require joint Lipschitz-continuity of every coordinate function $H^{i}$, $i=1,\ldots,d$.

\begin{assumption}\label{assum:lip} There exist positive constants $K^{i}_1, K^{i}_2$, $i=1,\ldots,d$ such that for all
$\theta,\theta'\in\rset^d$ and  $x,x'\in\rset^m$,
\[
|H^{i}(\theta,x)-H^{i}(\theta',x')|\leq K^{i}_1|\theta-\theta'|+ K^{i}_2|x-x'|.\
\]
\end{assumption}

We set
\begin{equation}
\label{eq:definition-H-*}
H^*:=|H(0,0)|,\quad K_{1}:=\sum_{i=1}^{d}K^{i}_{1},\quad K_{2}:=\sum_{i=1}^{d}K^{i}_{2}
\end{equation}
and notice that, clearly,
\begin{equation}\label{liipa}
|H(\theta,x)-H(\theta',x')|\leq K_1|\theta-\theta'|+ K_2|x-x'|.
\end{equation}

\begin{remark}{\rm The reader may wonder why we did not assume just \eqref{liipa} directly for some $K_{1},K_{2}$. The reason is that our
estimates in the proof of Lemma \ref{lem:h_minus_cond_h} below lead to constants depending on $K_{1},K_{2}$
as defined by the sums of the respective
Lipschitz-constants for the coordinate mappings.}
\end{remark}

The data sequence $(X_n)_{n\in\nset}$ need not be i.i.d., we require
only a mixing property, defined in Section~\ref{lm} below.

\begin{assumption}\label{assum:lmiu}
Let $\mathcal{G}_n$, $n\in\mathbb{N}$ be a given filtration with $\mathcal{G}_{0}=\{\emptyset,\Omega\}$.
Let $\mathcal{G}_{n}^{+}$, $n\in\mathbb{N}$ be a decreasing family of sigma-algebras such that
$\mathcal{G}_{n}$ is independent of $\mathcal{G}_{n}^{+}$ for all $n\in\mathbb{N}$.
The process $(X_n)_{n\in\nset}$ is conditionally $L$-mixing with respect to
$(\mathcal{G}_n,\mathcal{G}^+_n)_{n\in\nset}$. It satisfies for each $\theta \in \rset^d$ and $n\geq 1$,
\begin{equation}\label{lopp}
\E[H(\theta,X_n)]=h(\theta) \, .
\end{equation}
\end{assumption}

If the process $(X_n)_{n\geq 1}$ happens to be strictly stationary then \eqref{lopp} clearly holds.
Finally, we present a dissipativity condition on $H$.
\begin{assumption}\label{assum:dissipativity}
There exist $a$, $b>0$ such that, for all $\theta \in \rset^d$ and $x\in\rset^m$,
\begin{equation}\label{principal}
\ps{H(\theta,x)}{\theta} \geq a |\theta|^2-b.
\end{equation}
\end{assumption}
When $X_n=c$ for all $n\in\nset$ for some $c\in\rset^m$
(i.e. when $H(\theta,X_{n+1})$ is replaced by $h(\theta)$
in \eqref{nab}) then we arrive at the well-known unadjusted Langevin algorithm
whose convergence properties have been amply analyzed, see e.g.
~\cite{dalalyan:2017,durmus:moulines:2017,durmus:moulines:highdimULA,cheng2018sharp,majka2018non} and the references therein. The case of i.i.d.
$(X_n)_{n\in\nset}$ has also been
investigated in great detail, see e.g.~\cite{raginsky2017non,xu2018global,majka2018non}.

In the present article, better estimates are obtained for
the distance between $\mathcal{L}(\theta^{\lambda}_n)$ and $\pi_{\beta}$
than those of \cite{raginsky2017non} and
\cite{xu2018global}. Such rates have already been obtained in \cite{barkhagen2018stochastic}
for strongly convex $U$ and in \cite{majka2018non} for $U$ that satisfies a monotonicity condition outside
a compact set. Here we make no convexity assumptions at all.
This comes at the price of using the metric $W_1$ defined in \eqref{eq:definition-W-1} below while
\cite{raginsky2017non,xu2018global,majka2018non,barkhagen2018stochastic} use Wasserstein distances with respect to the
standard Euclidean metric, see \eqref{eq:definition-W-1} below.

Another novelty of our paper is that, just like in \cite{barkhagen2018stochastic},
we allow the data sample $(X_n)_{n\in\nset}$ to be dependent. As observed data have no reason
to be i.i.d., we believe that such a result is
fundamental to assure the robustness of online optimization procedures based on the SGLD \eqref{nab}.

\begin{remark}
{\rm In this work, the constants appearing are often denoted by $C_j$ for some natural number
$j\in\nset$.
Without further mention, these constants depend on $K_1$, $K_2$, $a$, $b$,
$H^*$, $\beta$, $d$, and from the process $(X_n)_{n\in\nset}$ (such as its moments). Unless otherwise stated,
they do not depend on anything else. In case of further dependencies (e.g.\ in Lemma~\ref{lyapp} dependence on the order of the moment
$p$ appears),
we indicate these in parentheses, e.g.\ $C_6(p)$.}
\end{remark}

Our main contribution is summarized in the following result. Set
\begin{equation}
\label{eq:definition-lambda-max}
\lambda_{\max}= \min\{a/2K_1^2,1/a\} \,,
\end{equation}
where $K_1$ and $a$ are defined in \Cref{assum:lip} and \Cref{assum:dissipativity}, respectively.
\begin{theorem}\label{main} Assume \Cref{imit}, \Cref{assum:lip}, \Cref{assum:lmiu} and \Cref{assum:dissipativity}.
Then there are positive constants
$C_0$, $C_1$, $C_2$ such that, for  every $0<\lambda\leq \lambda_{\max}$, $\beta >0$ and $n \in \nset$,
\begin{equation}\label{manyi}
W_1(\mathcal{L}(\theta^{\lambda}_n),\pi_{\beta})\leq C_1 \rme^{-C_0\lambda n}\E[|\theta_{0}|^{4}+1]+C_2\sqrt{\lambda} \,,
\end{equation}
where $W_1$ is defined in \eqref{eq:definition-W-1}.
\end{theorem}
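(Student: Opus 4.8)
The plan is to compare the SGLD recursion \eqref{nab} with the continuous-time Langevin diffusion \eqref{sde} and to exploit the exponential contraction of the latter in a distorted Wasserstein metric, as announced in the introduction. Write $(P_t)_{t\ge 0}$ for the Markov semigroup of \eqref{sde}, so that $\pi_{\beta}P_t=\pi_{\beta}$, and let $(L_t)_{t\ge 0}$ solve \eqref{sde} with $L_0=\theta_0$, coupled to the chain through the driving Brownian motion by setting $\xi_{n+1}=\lambda^{-1/2}(B_{(n+1)\lambda}-B_{n\lambda})$. The triangle inequality then gives, for $t_n=n\lambda$,
\[
W_1(\mathcal{L}(\theta^{\lambda}_n),\pi_{\beta})\le W_1(\mathcal{L}(\theta^{\lambda}_n),\mathcal{L}(L_{n\lambda}))+W_1(\mathcal{L}(L_{n\lambda}),\pi_{\beta}),
\]
and the two summands will produce the $C_2\sqrt\lambda$ term and the $C_1\rme^{-C_0\lambda n}\E[|\theta_0|^4+1]$ term of \eqref{manyi} respectively.

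\textbf{Moment bounds and contraction of the diffusion.} First I would establish, from \Cref{assum:dissipativity}, \eqref{liipa} and \Cref{imit}, uniform estimates $\sup_{n\in\nset}\E[|\theta^{\lambda}_n|^{2p}]\le C(p)\,\E[|\theta_0|^{2p}+1]$ for every $p\ge 1$, uniformly over $0<\lambda\le\lambda_{\max}$ (this is Lemma \ref{lyapp}), together with the classical bound $\sup_{t\ge 0}\E[|L_t|^{2p}]<\infty$; these are what ultimately produce the weight $\E[|\theta_0|^4+1]$. Next, from \cite{eberle:guillin:zimmer:Trans:2019}, under the present Lipschitz-and-dissipativity hypotheses on $h=\nabla U$ there are a rate $\hat c=\hat c(\beta)>0$, a concave increasing $f$ with $f(0)=0$ comparable to the identity on compacts, and a Lyapunov function $\mathcal{V}\asymp 1+|\cdot|^2$ for \eqref{sde}, such that the semimetric $\rho_{\beta}(\theta,\theta'):=f(|\theta-\theta'|)\bigl(1+\epsilon\mathcal{V}(\theta)+\epsilon\mathcal{V}(\theta')\bigr)$ satisfies $W_{\rho_{\beta}}(\mu P_t,\nu P_t)\le\rme^{-\hat c t}W_{\rho_{\beta}}(\mu,\nu)$. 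Applying this with $\mu=\mathcal{L}(\theta_0)$, $\nu=\pi_{\beta}$, $t=n\lambda$, bounding $W_{\rho_{\beta}}(\mathcal{L}(\theta_0),\pi_{\beta})$ via the moments of Step~1, and converting the resulting $W_{\rho_{\beta}}$-estimate back to $W_1$ (again using the moment bounds, since $f/ \mathrm{id}$ is not bounded below globally) yields $W_1(\mathcal{L}(L_{n\lambda}),\pi_{\beta})\le C_1\rme^{-C_0\lambda n}\E[|\theta_0|^4+1]$.

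\textbf{Discretization and dependence.} It remains to bound $W_1(\mathcal{L}(\theta^{\lambda}_n),\mathcal{L}(L_{n\lambda}))$ by $C_2\sqrt\lambda$, uniformly in $n$. Set $e_k(\theta):=H(\theta,X_k)-h(\theta)$, which is centred for each fixed $\theta$ by \eqref{lopp}. Running the coupling of the previous step between $L$ and the piecewise-frozen-drift interpolation of the chain, on $[k\lambda,(k+1)\lambda)$ the two drifts differ by $[h(L_s)-h(L_{k\lambda})]+[h(L_{k\lambda})-h(\theta^{\lambda}_k)]+e_k(\theta^{\lambda}_k)$; a Duhamel/Grönwall estimate along the coupling, using the $\rho_{\beta}$-contraction, controls $W_{\rho_{\beta}}(\mathcal{L}(\theta^{\lambda}_n),\mathcal{L}(L_{n\lambda}))$ by a geometric sum $\sum_{k<n}\rme^{-\hat c\lambda(n-k)}$ of the one-step errors. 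The smooth-interpolation error is $O(\lambda^{3/2})$ per step, hence $O(\sqrt\lambda)$ after summation and using the moment bounds.

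\textbf{Main obstacle.} The delicate point is the data term $\lambda\sum_{k<n}\rme^{-\hat c\lambda(n-k)}e_k(\theta^{\lambda}_k)$: since $(X_n)$ is not i.i.d.\ we do not have $\E[H(\theta^{\lambda}_k,X_k)\mid\mathcal{G}_k]=h(\theta^{\lambda}_k)$, and each summand is only $O(\lambda)$, so a term-by-term bound gives a useless $O(1)$. One must instead exploit the conditional $L$-mixing of \Cref{assum:lmiu} through Lemma \ref{lem:h_minus_cond_h}, which supplies square-root cancellation in such windowed sums, of the form $\|\sum_k a_k e_k(\theta^{\lambda}_k)\|\lesssim(\sum_k a_k^2)^{1/2}$ up to a correction controlled by the Lipschitz dependence in $\theta$ (this is where both $K_1$ and $K_2$ from \eqref{liipa} enter, cf.\ the remark following \eqref{liipa}); with $a_k=\lambda\rme^{-\hat c\lambda(n-k)}$ one has $(\sum_k a_k^2)^{1/2}=O(\sqrt\lambda)$. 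Propagating this bound back from $W_{\rho_{\beta}}$ to $W_1$ via the moment estimates gives $W_1(\mathcal{L}(\theta^{\lambda}_n),\mathcal{L}(L_{n\lambda}))\le C_2\sqrt\lambda$ uniformly in $n$, which together with the previous paragraph yields \eqref{manyi}. The hard part is thus twofold: making the reflection-type contraction of \cite{eberle:guillin:zimmer:Trans:2019}, which holds for the exact diffusion, survive the Grönwall argument along the interpolated chain with the distorted (Lyapunov-weighted) metric, and showing that the $L$-mixing cancellation is strong enough to beat the $O(1/\lambda)$ length of the summation window.
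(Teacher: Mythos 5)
Your two-term decomposition (chain vs.\ diffusion, diffusion vs.\ $\pi_\beta$) and your identification of the three ingredients --- Lyapunov moment bounds, the Eberle--Guillin--Zimmer contraction in a weighted semimetric, and mixing-based square-root cancellation --- match the skeleton of the paper's argument, and your second term is handled exactly as in the paper (Proposition \ref{prop:contra}). However, there is a genuine gap in your treatment of the data-noise term, precisely at the point you flag as ``the hard part.'' The maximal inequality that delivers the square-root cancellation (Theorem \ref{estim}) requires the integrand to be \emph{conditionally centred given the $\sigma$-field at the start of the averaging window}; your summands $e_k(\theta^{\lambda}_k)=H(\theta^{\lambda}_k,X_k)-h(\theta^{\lambda}_k)$ do not satisfy this, because $\theta^{\lambda}_k$ is adapted to the data filtration and hence correlated with $X_k$. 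Lemma \ref{lem:h_minus_cond_h}, which you invoke, only controls the \emph{bias} $\sup_\theta|h_{nT,s}(\theta)-h(\theta)|$; it does not by itself produce the cancellation $\|\sum_k a_k e_k(\theta^{\lambda}_k)\|\lesssim(\sum_k a_k^2)^{1/2}$ for an adapted argument. The paper's resolution --- absent from your sketch --- is to interpose \emph{two} auxiliary processes: the data-driven continuous SDE $\tilde Y^{\lambda}_t(\mathbf{X})$ of \eqref{mah}, and the averaged diffusion $\bZ{\lambda}{nT}{t}$ of \eqref{eq:definition-overline-Z} restarted at grid points $nT$ with $T=\lfloor1/\lambda\rfloor$. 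The latter is $\mathcal{H}_{nT}$-measurable (it sees the whole Brownian path but only the data up to $nT$), so $H(\bZ{\lambda}{nT}{s},X_{\lfloor s\rfloor})-h_{nT,s}(\bZ{\lambda}{nT}{s})$ \emph{is} centred given $\mathcal{H}_{nT}$ and Theorem \ref{estim} applies on each block; the bias is then absorbed by Lemma \ref{lem:h_minus_cond_h}, and the blocks are glued by telescoping against the contraction (Lemmas \ref{vizier} and \ref{intermediate}). Without this freezing-and-restarting construction your cancellation step does not go through as written.

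A second, smaller divergence: the paper does not use a global Gr\"onwall/Duhamel estimate for the discretization error. A single Gr\"onwall over $[0,n\lambda]$ produces a factor $\rme^{K_1\lambda n}$ that is not uniform in $n$; the paper instead works blockwise (where the Gr\"onwall factor is the harmless $\rme^{K_1\lambda T}\le \rme^{K_1}$) and bounds the per-block discrepancy between the interpolated Euler scheme $Y^{\lambda}$ and $\tilde Y^{\lambda}$ by a Girsanov/Kullback--Leibler estimate (Lemma \ref{kl}, Lemma \ref{lsKL}), together with a measurable-selection argument (Lemmas \ref{far} and \ref{lagel}) to pass from frozen data $\mathbf{x}$ to the random stream $\mathbf{X}$ --- a step your proposal omits entirely, and which is needed because the KL computation is only valid conditionally on the data. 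Finally, a minor point: the uniform moment bounds are Lemma \ref{lem:moment_SGLD_2p} and its corollaries, not Lemma \ref{lyapp} (which is the generator drift inequality feeding into them).
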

\begin{remark}{\rm Our assumptions can be somewhat weakened, as seen from a careful reading of the
proofs. Indeed, the above theorem remains valid if we assume (instead of conditional $L$-mixing) only
that $X_{n}$, $n\in\mathbb{N}$ are $L^{4}$-bounded and, for some $\epsilon>0$, the sequences
$M^n_{2+\epsilon}(X), \Gamma^n_{2+\epsilon}(X), n \in \nset$ are bounded in $L^{2}$
for some $\epsilon>0$.
Furthermore, Assumption \ref{imit} can be weakened to $|\theta_0|\in L^{6}$. }
\end{remark}

\cite[Example~3.4]{barkhagen2018stochastic} suggests that the best rate we can hope to get in \eqref{manyi} is $\sqrt{\lambda}$, even in the convex case.
The above theorem achieves this rate.
We remark that, although
the statement of Theorem \ref{main} concerns the discrete-time recursive
scheme \eqref{nab}, its proof is carried out entirely in a continuous-time setting, in Section \ref{po}. It relies
on techniques from \cite{barkhagen2018stochastic} and \cite{eberle:guillin:zimmer:Trans:2019}. The principal new ideas are
the introduction of the auxiliary process $\tilde{Y}^{\lambda}_t(\mathbf{x})$, $t\in\rset_+$
(see \eqref{mah} below) and reliance on the contractivity of the continuous system dynamics in a suitable semimetric (see Proposition
\ref{prop:contra} below).

\subsection{Related work and our contributions}
In \cite{raginsky2017non}, a non-convex empirical risk minimization problem is considered. The excess risk is decomposed into a sampling error resulting from the application of Stochastic Gradient Langevin Dynamic (SGLD), a generalization error and a suboptimality error. Our aim is to improve the sampling error in the non-convex setting and provide sharper convergence estimates under more relaxed conditions. To this end, we focus on the comparison of our results with \cite[Proposition~3.3]{raginsky2017non}.

\cite[Assumption~(A.5)]{raginsky2017non} is (much) stronger than \Cref{imit}
above. \Cref{assum:dissipativity} is identical to \cite[Assumption~(A3)]{raginsky2017non}.
\cite[Assumption~(A.2)]{raginsky2017non} corresponds to Lipschitz-continuity of $H$
in its first variable with a Lipschitz-constant 
independent from its second variable
and $(A.1)$ there means that $H(0,\cdot)$, $u(0,\cdot)$ are bounded
where $U(\theta)=\E[u(\theta,X_0)]$ and $H(\cdot,\cdot)=\partial_{\theta}u(\cdot,\cdot)$.
Hence \Cref{assum:lip} here is neither stronger nor weaker than $(A.2)$ of \cite{raginsky2017non},
they are incomparable conditions.
In any case, \Cref{assum:lip} does not seem to be restrictive.
Condition $(A.4)$ in \cite{raginsky2017non} is implied by  \Cref{assum:lip} and \Cref{assum:lmiu}.



We obtain stronger rates (which we believe to be optimal) than those of \cite{raginsky2017non}. More precisely, we obtain a rate $\lambda^{1/2}$ in \eqref{manyi} for the $W_1$
distance while  \cite{raginsky2017non} only obtains $\lambda^{5/4}n$ (which depends on $n$)
but in the ${W}_2$ distance.
Furthermore, \cite{raginsky2017non} is applicable only if $(X_n)_{n \in \nset}$ is i.i.d. while \Cref{assum:lmiu} suffices for the derivation of our results.


Now let us turn to \cite{majka2018non}. That paper assumes a strengthening of our dissipativity assumption:
they require \Cref{assum:lip} and that there exist $b,a>0$ such that, for each $\theta, \theta' \in \rset^d$ satisfying
$|\theta-\theta'|>b$,
\begin{equation}\label{condition}
\ps{h(\theta)-h(\theta')}{\theta-\theta'} \geq a |\theta-\theta'|^2,\ x\in\rset^m.
\end{equation}
Note, however, that this is stipulated only for $h$ in \cite{majka2018non}
while we need our dissipativity assumption for $H(\cdot,x)$, for all $x$, as we allow dependent data streams.
Furthermore, Assumption 1.3 in \cite{majka2018non} requires that the variance of
$H(\theta,X_0)$ is  controlled by a power of the step size $\lambda$ while we do not need
such an assumption. The second conclusion of their Theorem 1.4
(with $\alpha=1$, using their notation $\alpha$) is the same as that of our Theorem \ref{main}.




\section{Proofs}\label{po}

\subsection{Conditional $L$-mixing} \label{lm}

A key mixing assumption is required about $X_{n}$, $n\in\mathbb{N}$. In this
subsection we present some related concepts and results. The material presented here is from \cite{barkhagen2018stochastic}.

$L$-mixing processes and random fields were introduced in \cite{gerencser:1989}. In
\cite{chau:kumar:rasonyi:sabanis:2019}, the closely related concept of \emph{conditional} $L$-mixing
was created.

We assume that the probability space $(\Omega, \mathcal{F}, \P)$ is equipped
with a discrete-time filtration $(\mathcal{R}_n)_{n\in\nset}$ as well as with a decreasing sequence of sigma-fields $(\mathcal{R}_n^+)_{n\in\nset}$ such that the $\sigma$-fields $\mathcal{R}_n$ and $\mathcal{R}_n^+$ are independent for all $n \in \nset$.
{}
A random process $(U_n)_{n\in\nset}$ is called $L^r$-\emph{bounded} for some $r\geq 1$
if
\[
\sup_{n\in\nset}\E^{1/r}[|U_n|^r]<\infty.
\]
Define, for each $n\in\nset$, $i=1,\ldots,d$,
\begin{align}\nonumber
	\tilde{M}^{n}_r(U,i) &:= \sup_{m \in\nset}
	\CPE[1/r]{|U_{n+m}^{i}|^r}{\mathcal{F}_n},\\
	\nonumber\tilde{\gamma}^{n}_r(\tau,U,i)&:= \sup_{m\geq\tau}
	\CPE[1/r]{|U^{i}_{n+m}-\CPE{U^{i}_{n+m}}{\mathcal{F}_{n+m-\tau}^+\vee \mathcal{F}_n}|^r}{
	\mathcal{F}_n},\ \tau\geq 0,
\end{align}
where $U_{n+m}^{i}$ refers to the $i$th coordinate of $U_{n+m}$ in the above expressions. Finally, set
\begin{equation}
\label{eq:definition-Gamma}
\text{$\tilde{\Gamma}^{n}_r(U,i) := \sum_{\tau= 0}^{\infty}\tilde{\gamma}^{n}_r(\tau,U,i)$,
${M}^{n}_{r}(U) :=\sum_{i=1}^{k}\tilde{M}^{n}_{r}(U,i)$, and $\Gamma^{n}_{r}(U) :=\sum_{i=1}^{k}\tilde{\Gamma}^{n}_{r}(U,i)$.}
\end{equation}
\begin{definition}[Conditional $L$-mixing] We say that the random process
$(U_n)_{n\in\nset}$
is \emph{{conditionally} $L$-mixing}
with respect to $(\mathcal{R}_n,\mathcal{R}_n^+)_{n\in \nset}$ if
$(U_n)_{n\in\nset}$ is adapted to
$(\mathcal{R}_n)_{n\in\nset}$
for all $\theta\in \Param$;
for all $r\geq 1$,
it is $L^r$-bounded;
and the sequences  $(M^n_r(U))_{n\in \nset}$, $(\Gamma^n_r(U))_{n\in\nset}$
are also $L^r$-bounded for all $r\geq 1$.
\end{definition}

Conditionally $L$-mixing encompasses a broad class of stochastic models (i.i.d.\ with finite moments
of all orders, linear processes,
functionals of Markov processes, etc.), see  in \cite[Example 2.1]{barkhagen2018stochastic}.


{}
It is convenient to extend the $L$-mixing property to the continuous-time setting.
We consider a continuous-time filtration $(\mathcal{R}_t)_{t\in\mathbb{R}_+}$ as well as a decreasing family of sigma-fields
$(\mathcal{R}_t^+)_{t\in\mathbb{R}_+}$. We assume that $\mathcal{R}_t$ is
independent of $\mathcal{R}_t^+$, for all $t\in\mathbb{R}_+$.
Consider an $\mathbb{R}^{d}$-valued continuous-time stochastic process $(W_{t})_{t\in\mathbb{R}_+}$
which is progressively measurable (i.e.\ $W:[0,t]\times\Omega\to\mathbb{R}^d$ is $\mathcal{B}([0,t])\otimes\mathcal{R}_t$-measurable
for all $t\in\mathbb{R}_+$).
From now on we assume that $W_{t}\in L^{1}$, $t\in\mathbb{R}_{+}$.
We define the quantities\footnote{For a family $(Z_i)_{i\in I}$ of real-valued random variables
(where the index set $I$ may have arbitrary cardinality), there exists one and (up to
a.s.\ equality) only one random variable $g = \mathrm{ess}\sup_{i\in I} Z_i$ such that it dominates almost
surely all the $Z_i$ and it is a.s.\ dominated by any other random variable with this property.
For an existence proof, see e.g.\  \cite[Proposition~VI.1.1]{neveu}.}
\begin{align*}
	\tilde{M}_r^i(\mathbf{W}) &:= \mathrm{ess.}\sup_{t \in\mathbb{R}_+} \CPE[1/r]{|W_{t}^i|^{r}}{\mathcal{R}_{0}},\\
	\tilde{\gamma}^i_r(\tau,\mathbf{W}) &:=  \mathrm{ess.}\sup_{t\geq\tau}
	\CPE[1/r]{|W_{t}^i- \CPE{W_{t}^i}{{\mathcal{R}_{t-\tau}^+\vee \mathcal{R}_0}}|^r}{\mathcal{R}_{0}},\ \tau\in\mathbb{R}_+,
\end{align*}
and set
\begin{equation*}
\text{
$M_r(\bW) := \sum_{i=1}^d \tilde{M}_r^i(\bW)$, $\tilde{\Gamma}_r^i(\mathbf{W}) := \sum_{\tau=0}^{\infty} \tilde{\gamma}^i_r(\tau,\bW)$, and $\Gamma_r(\mathbf{W}) := \sum_{i=1}^d \tilde{\Gamma}_r^i(\bW)$}
\end{equation*}
where $W_t^i$ refers to the $i$th coordinate of $W_t$.
We recall \cite[Theorem~B.5]{barkhagen2018stochastic} which is key to further developments.
\begin{theorem}\label{estim} Let $(W_{t})_{t \in \rset_+}$ be $L^{r}$-bounded for some $r> 2$
and let $M_{r}(\bW)+\Gamma_{r}(\bW)<\infty$ a.s.
Assume $\CPE{W_{t}}{\mathcal{R}_{0}}=0$ a.s.\ for $t\in\mathbb{R}_+$.
Let $f:[0,T]\to\mathbb{R}$ be $\mathcal{B}([0,T])$-measurable with $\int_{0}^{T}f_{t}^{2}\, \rmd t<\infty$.
Then there is a constant $C'(r)$ such that
\begin{equation}\label{erd}
\textstyle{\CPE[1/r]{\sup_{s\in [0,T]}\left|\int_{0}^{s} f_t W_t\, \rmd t \right|^r}{\mathcal{R}_{0}}
\leq C'(r)\left( \int_{0}^{T} f_t^{2}\, \rmd t \right)^{1/2} [M_r(\bW) +\Gamma_r(\bW)], \text{a.s.} }
\end{equation}
We can actually take
\[
\textstyle{C'(r)=\frac{\sqrt{r-1}}{2^{1/2}-2^{1/r}}.}
\]
\end{theorem}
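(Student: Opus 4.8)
The plan is to reduce \eqref{erd} to a maximal inequality for a scalar-valued, time-discretised sum, and then to run the conditional $L$-mixing argument (in the spirit of \cite{gerencser:1989,barkhagen2018stochastic}) on that sum, with two geometric scalings — one in the discretisation mesh, one in the mixing lag — tuned against each other. First reduce to one coordinate: since $|v|\le\sum_{i=1}^{d}|v^{i}|$ for $v\in\rset^{d}$, Minkowski's inequality for the conditional $L^{r}(\cdot\mid\mathcal{R}_{0})$ norm gives
\[
\CPE[1/r]{\sup_{s\in[0,T]}\Big|\int_{0}^{s}f_{t}W_{t}\,\rmd t\Big|^{r}}{\mathcal{R}_{0}}\le\sum_{i=1}^{d}\CPE[1/r]{\sup_{s\in[0,T]}\Big|\int_{0}^{s}f_{t}W_{t}^{i}\,\rmd t\Big|^{r}}{\mathcal{R}_{0}},
\]
so it suffices to establish the one-dimensional bound with $\tilde M_{r}^{i}(\bW)+\tilde\Gamma_{r}^{i}(\bW)$ on the right and then sum over $i$. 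Fix $i$ and write $V_{t}:=W_{t}^{i}$; then $\CPE{V_{t}}{\mathcal{R}_{0}}=0$, $V$ is progressively measurable and $L^{r}$-bounded, and (by Cauchy--Schwarz and $\E\int_{0}^{T}V_{t}^{2}\,\rmd t<\infty$) the map $s\mapsto\int_{0}^{s}f_{t}V_{t}\,\rmd t$ is a.s.\ well defined and continuous.

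By continuity, $\sup_{s\in[0,T]}|\int_{0}^{s}f_{t}V_{t}\,\rmd t|=\lim_{N}\max_{0\le j\le 2^{N}}|\int_{0}^{jT2^{-N}}f_{t}V_{t}\,\rmd t|$, and by Fatou it is enough to bound the $N$-th maximum uniformly in $N$. For the discretised maximum one decomposes each block integral $\int_{a}^{b}f_{t}V_{t}\,\rmd t$ by telescoping the conditional expectations with respect to the future $\sigma$-fields $\mathcal{R}_{\cdot}^{+}\vee\mathcal{R}_{0}$ taken at geometrically increasing lags: a ``coarse'' term $\CPE{\int_{a}^{b}f_{t}V_{t}\,\rmd t}{\mathcal{R}_{0}^{+}\vee\mathcal{R}_{0}}$ plus telescoping increments indexed by a scale $k\ge 0$ coming from conditioning on $\mathcal{R}_{(\,\cdot\,-2^{k})^{+}}^{+}\vee\mathcal{R}_{0}$. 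Via Minkowski and the definition of $\tilde\gamma_{r}^{i}$, the scale-$k$ increment has conditional $L^{r}$ norm controlled by $\tilde\gamma_{r}^{i}(c2^{k},\bW)$ times the $L^{2}$-norm of $f$ on the block, while the coarse term is controlled by $\tilde M_{r}^{i}(\bW)$ times the same quantity. The structural point is that, once the mesh and the lag scale are matched, the scale-$k$ increments over all blocks of the corresponding length form, conditionally on $\mathcal{R}_{0}$, a martingale difference sequence with respect to a suitable interleaved filtration built from the $\mathcal{R}_{\cdot}^{+}\vee\mathcal{R}_{0}$ — this is exactly where the independence $\mathcal{R}_{s}\perp\mathcal{R}_{s}^{+}$ and progressive measurability of $W$ enter. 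Applying the conditional Burkholder inequality (constant $\sqrt{r-1}$) to each scale, bounding the quadratic variation by $\int_{0}^{T}f_{t}^{2}\,\rmd t$ via additivity of $\int f^{2}$ over the mesh, using $\sum_{j}(\int_{I_{j}}f_{t}^{2}\,\rmd t)^{r/2}\le(\int_{0}^{T}f_{t}^{2}\,\rmd t)^{r/2}$ (superadditivity of $x^{r/2}$, $r\ge 2$) to absorb the union bound over blocks, and finally summing over scales — a convergent geometric series with ratio $2^{1/r-1/2}<1$ since $r>2$, which is responsible for the denominator $2^{1/2}-2^{1/r}$ — one obtains \eqref{erd}, the coarse terms contributing $M_{r}(\bW)$ and the scale sum contributing $\Gamma_{r}(\bW)=\sum_{\tau}\tilde\gamma_{r}(\tau,\bW)$, hence the constant $C'(r)=\sqrt{r-1}/(2^{1/2}-2^{1/r})$.

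The main obstacle is precisely this double bookkeeping: choosing the interleaved reference filtration so that the fixed-scale increments are genuinely a conditional martingale difference sequence — which forces a specific pairing of the discretisation mesh with the mixing lag and a careful use of $\mathcal{R}_{s}\perp\mathcal{R}_{s}^{+}$ — and then verifying that the scale-by-scale estimates telescope to give exactly $M_{r}(\bW)+\Gamma_{r}(\bW)$ with the advertised constant. A secondary but genuine technical point is the passage from the discretised maximum back to the continuous supremum, which is where the strict inequality $r>2$ (both for convergence of the geometric series and for the uniform integrability of the approximations) and the progressive measurability of $W$ are actually used.
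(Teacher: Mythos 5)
First, a point of reference: the paper does not prove this statement at all — it is quoted verbatim as \cite[Theorem~B.5]{barkhagen2018stochastic}, whose proof in turn adapts Gerencs\'er's moment and maximal inequalities for $L$-mixing processes \cite{gerencser:1989} to the conditional, continuous-time setting. So your sketch must be measured against that argument, and while it assembles the right raw ingredients (telescoping over the future $\sigma$-fields $\mathcal{R}^{+}_{\cdot}\vee\mathcal{R}_{0}$, a reverse martingale-difference structure, the conditional Burkholder-type inequality with constant $\sqrt{r-1}$, and a dyadic mechanism behind the factor $(2^{1/2}-2^{1/r})^{-1}$), it wires them together in a way that does not close.

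The established proof contains \emph{two separate} summations which your sketch conflates into one. (i) One telescopes $W_t=\sum_{\tau\ge 0}\bigl(\CPE{W_t}{\mathcal{R}^{+}_{t-\tau-1}\vee\mathcal{R}_0}-\CPE{W_t}{\mathcal{R}^{+}_{t-\tau}\vee\mathcal{R}_0}\bigr)$ over \emph{all unit lags} $\tau$ (the $\tau=0$ term vanishes by the centering and the independence of $\mathcal{R}_0$ and $\mathcal{R}_0^{+}$); for each fixed $\tau$ the increments, grouped into unit time blocks and read in decreasing time, form a martingale difference sequence with respect to the decreasing family $\mathcal{R}^{+}_{\cdot}\vee\mathcal{R}_0$ — no matching of mesh to lag is needed beyond this — and the martingale $L^r$ inequality gives a bound proportional to $\sqrt{r-1}\,(\int_0^Tf^2)^{1/2}\bigl(\tilde\gamma_r(\tau,\bW)+\tilde\gamma_r(\tau+1,\bW)\bigr)$ per lag; the sum over $\tau$ converges because $\Gamma_r(\bW)=\sum_\tau\tilde\gamma_r(\tau,\bW)<\infty$ by hypothesis, \emph{not} because of any geometric decay. (ii) A separate dyadic chaining in the \emph{time} variable handles the supremum: partitioning $[0,T]$ into $2^N$ blocks of equal $f^2$-mass, level $N$ contributes a factor $2^{-N(1/2-1/r)}$ via $\bigl(\sum_j(\int_{I_{j,N}}f^2)^{r/2}\bigr)^{1/r}=2^{-N(1/2-1/r)}(\int_0^Tf^2)^{1/2}$, and summing this geometric series over $N$ is what produces the denominator $2^{1/2}-2^{1/r}$. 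Your plan instead telescopes at geometrically spaced lags $2^k$ and asserts that the resulting sum over $k$ is simultaneously a convergent geometric series with ratio $2^{1/r-1/2}$ \emph{and} evaluates to $\Gamma_r(\bW)$; these cannot both hold, since $\Gamma_r$ is a plain sum of mixing coefficients over unit lags with no built-in geometric structure. Relatedly, your treatment of the supremum is incomplete: at a single discretization level, the superadditivity bound $\sum_j(\int_{I_j}f^2)^{r/2}\le(\int_0^Tf^2)^{r/2}$ controls the maximum of the \emph{block increments}, not the running maximum of the partial integrals $\int_0^{s_j}f_tW_t\,\rmd t$; since the latter is not a martingale, Doob is unavailable and the multi-level chaining (ii) is indispensable. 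Separating the two sums as above is the repair.
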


Estimates for $M_{r}(\mathbf{W}),\Gamma_{r}(\mathbf{W})$ imply similar estimates for
functionals of $\mathbf{W}$.
\begin{lemma}\label{lem:below}
Assume \Cref{assum:lip}. Then, for each $i \in \nset$ and $\theta\in \boule{i}$, $(H(\theta,W_t))_{t\in\mathbb{R}_{+}}$ satisfies
\begin{equation}\label{mamma}
M_r(H(\theta,\mathbf{W}))\leq K_1 i + K_2 M_r(\mathbf{W}) + H^*,   
\end{equation}
where $H^*$ is defined in \eqref{eq:definition-H-*} and
\begin{equation}\label{gomma}
\Gamma_r(H(\theta,\mathbf{W}))\leq 2K_2\Gamma_r(\mathbf{W}).
\end{equation}
\end{lemma}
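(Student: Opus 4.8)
The plan is to prove both inequalities by reducing the coordinatewise mixing quantities for the process $(H(\theta,W_t))_{t\in\mathbb{R}_+}$ to those of $(W_t)_{t\in\mathbb{R}_+}$, exploiting \Cref{assum:lip} together with the triangle inequality for conditional $L^r$-norms. Throughout I fix $i\in\nset$ and $\theta\in\boule{i}$, so that $|\theta|\leq i$.

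For \eqref{mamma}: I would estimate each coordinate $H^j(\theta,W_t)$ pointwise. Writing $H^j(\theta,W_t) = H^j(\theta,W_t) - H^j(0,0) + H^j(0,0)$ and applying \Cref{assum:lip}, we get $|H^j(\theta,W_t)| \leq K_1^j|\theta| + K_2^j|W_t| + |H^j(0,0)| \leq K_1^j i + K_2^j|W_t| + |H^j(0,0)|$ (here $|W_t|$ is the full Euclidean norm, dominating any single coordinate). Taking $\CPE[1/r]{|\cdot|^r}{\mathcal{R}_0}$ and using the conditional Minkowski inequality, then the essential supremum over $t$, yields $\tilde M_r^j(H(\theta,\mathbf{W})) \leq K_1^j i + K_2^j M_r(\mathbf{W}) + |H^j(0,0)|$, where I have bounded $\mathrm{ess.}\sup_t \CPE[1/r]{|W_t|^r}{\mathcal{R}_0}$ by $M_r(\mathbf{W}) = \sum_k \tilde M_r^k(\mathbf{W})$ after a further Minkowski step splitting $|W_t|$ into coordinates. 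Summing over $j=1,\dots,d$ and recalling the definitions $K_1=\sum_j K_1^j$, $K_2=\sum_j K_2^j$, $H^*\geq\sum_j|H^j(0,0)|$ (in fact $H^*=|H(0,0)|$, and $\sum_j |H^j(0,0)|$ may exceed this, so I would instead bound $\sum_j |H^j(0,0)|$ directly or simply use that $|H(0,0)| \geq \max_j |H^j(0,0)|$ and absorb the factor — care is needed here, and the cleanest route is to note $\sum_j |H^j(0,0)| = \|H(0,0)\|_1 \le \sqrt d\,|H(0,0)|$; if the paper wants exactly $H^*$ without the $\sqrt d$, one re-examines, but the stated bound is what we target) gives \eqref{mamma}.

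For \eqref{gomma}: the key observation is that $H^j(\theta,\cdot)$ is a deterministic Lipschitz function, so for any sub-$\sigma$-field $\mathcal{H}$ we can compare $H^j(\theta,W_t) - \CPE{H^j(\theta,W_t)}{\mathcal{H}}$ with the increment built from $W_t$. The standard trick is that for an $L^r$ random variable $Z$, $\CPE[1/r]{|Z - \CPE{Z}{\mathcal{H}}|^r}{\mathcal{R}_0} \leq 2\,\CPE[1/r]{|Z - Y|^r}{\mathcal{R}_0}$ for \emph{any} $\mathcal{H}$-measurable $Y$, since conditional expectation onto $\mathcal{H}$ is a contraction in conditional $L^r(\mathcal{R}_0)$ when $\mathcal{R}_0 \subseteq \mathcal{H}$ (here $\mathcal{H} = \mathcal{R}_{t-\tau}^+\vee\mathcal{R}_0 \supseteq \mathcal{R}_0$). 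Taking $Y = H^j(\theta, \CPE{W_t}{\mathcal{R}_{t-\tau}^+\vee\mathcal{R}_0})$ — wait, that is not obviously the right choice because Lipschitzness controls $|H^j(\theta,W_t) - H^j(\theta,\E[W_t|\mathcal H])| \le K_2^j |W_t - \E[W_t|\mathcal H]|$, and $\E[W_t|\mathcal H]$ need not be the conditional expectation appearing in $\tilde\gamma$ since that one is coordinatewise; but in fact the coordinatewise conditional expectations assemble into the vector conditional expectation, so $\CPE{W_t}{\mathcal H}$ has $k$-th coordinate $\CPE{W_t^k}{\mathcal H}$. Thus $|W_t - \CPE{W_t}{\mathcal H}| \le \sum_k |W_t^k - \CPE{W_t^k}{\mathcal H}|$, and after the Minkowski step and the essential supremum we obtain $\tilde\gamma_r^j(\tau, H(\theta,\mathbf W)) \le 2 K_2^j \sum_k \tilde\gamma_r^k(\tau,\mathbf W)$. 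Summing over $\tau$ gives $\tilde\Gamma_r^j(H(\theta,\mathbf W)) \le 2 K_2^j \Gamma_r(\mathbf W)$, and summing over $j$ gives \eqref{gomma}.

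The main obstacle is the bookkeeping around coordinatewise versus Euclidean norms and the contraction property of conditional expectation in the \emph{conditional} $L^r(\mathcal{R}_0)$ sense: one must verify carefully that since $\mathcal{R}_0 \subseteq \mathcal{R}_{t-\tau}^+\vee\mathcal{R}_0$, the tower property gives $\CPE{\,|\CPE{Z}{\mathcal{R}_{t-\tau}^+\vee\mathcal R_0}|^r}{\mathcal R_0} \le \CPE{|Z|^r}{\mathcal R_0}$ by conditional Jensen, which is what legitimizes the factor $2$. None of the steps is deep; the content is entirely in organizing these inequalities so the constants come out exactly as claimed (in particular the absence of any $K_1$-dependence and the $i$-independence in \eqref{gomma}, which is where the centering — subtracting the conditional expectation — does its job, killing the $\theta$-dependent and constant terms).
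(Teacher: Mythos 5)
Your argument is correct and is exactly the route the paper takes: the paper's ``proof'' is only a pointer to \cite[Lemma~6.4 and Example~2.4]{barkhagen2018stochastic}, whose content is precisely your coordinatewise Lipschitz bound, the conditional Minkowski inequality, and the factor-$2$ trick (replace $\CPE{Z}{\mathcal{H}}$ by any $\mathcal{H}$-measurable $Y$ and use conditional Jensen with $\mathcal{R}_0\subseteq\mathcal{H}$) for the $\Gamma_r$ estimate. The one wrinkle you rightly flag is real but harmless: summing the coordinate bounds produces $\sum_j|H^j(0,0)|=\|H(0,0)\|_1$ rather than $H^*=|H(0,0)|$, so the stated \eqref{mamma} is off by at most a factor $\sqrt{d}$ on that term; this looseness propagates only into unnamed constants and does not affect any downstream result.
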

\begin{proof} Identical to the proofs in \cite[Lemma~6.4 and Example~2.4]{barkhagen2018stochastic}, using
Lipschitz-continuity of the coordinate functions $H^{i}$ with the respective constants $K_{1}^{i}$, $K_{2}^{i}$.
\end{proof}

One of the main advantages of the mixing concepts we use is that one can plug in $\mathcal{R}_{0}$-measurable random
variables into $\theta$ and still preserve the mixing properties.
\begin{lemma}\label{lemma_63}
Assume \Cref{assum:lip} and set $i \in \nset$. let $(Z_s)_{s \geq 0}$ be a family of $\boule{i}$-valued random variables satisfying $Z:\mathbb{R}_{+}\times\Omega\to\mathbb{R}^{d}$ is $\mathcal{B}(\mathbb{R}_{+})\otimes\mathcal{R}_{0}$-measurable.
Define the process $Y_t = H(Z_{t},W_{t})$ for  $t\in\mathbb{R}_{+}$. Then
\[
M_p(\mathbf{Y}) \le K_1 i + K_2 M_r(\mathbf{W}) + H^*,   
\]
and
\[{}
\Gamma_r(\mathbf{Y})\leq 2K_2\Gamma_r(\mathbf{W}).
\]
\end{lemma}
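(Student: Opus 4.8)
The plan is to reduce Lemma~\ref{lemma_63} to Lemma~\ref{lem:below} by exploiting the fact that $Z$ is $\mathcal{R}_0$-measurable, so that conditioning on $\mathcal{R}_0$ effectively ``freezes'' $Z_t$ and lets us treat it as a constant. Concretely, I would first observe that since $Z:\mathbb{R}_+\times\Omega\to\boule{i}$ is $\mathcal{B}(\mathbb{R}_+)\otimes\mathcal{R}_0$-measurable and $\mathbf{W}$ is progressively measurable, the process $Y_t=H(Z_t,W_t)$ is itself progressively measurable and $L^r$-bounded (using \Cref{assum:lip}, the pointwise growth bound $|H(\theta,x)|\le K_1|\theta|+K_2|x|+H^*$ from \eqref{liipa}, and $|Z_t|\le i$). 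Hence all the quantities $\tilde M_r^j(\mathbf Y)$, $\tilde\gamma_r^j(\tau,\mathbf Y)$ are well-defined.

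For the bound on $M_r(\mathbf Y)$, I would estimate coordinatewise: $|Y_t^j|=|H^j(Z_t,W_t)|\le |H^j(Z_t,W_t)-H^j(0,0)|+|H^j(0,0)|\le K_1^j|Z_t|+K_2^j|W_t|+H^*\le K_1^j i+K_2^j|W_t|+H^*$ by \Cref{assum:lip}. Taking $\CPE[1/r]{\,\cdot\,}{\mathcal{R}_0}$ and using the triangle inequality in $L^r$ (Minkowski), then the essential supremum over $t$, and finally summing over $j=1,\dots,d$, gives $M_r(\mathbf Y)\le K_1 i+K_2 M_r(\mathbf W)+H^*$; here one uses $|W_t^j|\le|W_t|$ and $\sum_j K_2^j\CPE[1/r]{|W_t|^r}{\mathcal R_0}$ against the definition of $M_r(\mathbf W)$, exactly as in \cite[Lemma~6.4, Example~2.4]{barkhagen2018stochastic}.

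The slightly more delicate part is $\Gamma_r(\mathbf Y)$, i.e.\ controlling $\tilde\gamma_r^j(\tau,\mathbf Y)$. The key point is that $Z_t$ is $\mathcal R_0$-measurable, hence also $\mathcal R_{t-\tau}^+\vee\mathcal R_0$-measurable, so it behaves as a constant relative to the conditioning sigma-field appearing in $\tilde\gamma$. I would write, for fixed outcomes of $\mathcal R_0$,
\[
\bigl|Y_t^j-\CPE{Y_t^j}{\mathcal R_{t-\tau}^+\vee\mathcal R_0}\bigr|
\le \bigl|H^j(Z_t,W_t)-\CPE{H^j(Z_t,W_t)}{\mathcal R_{t-\tau}^+\vee\mathcal R_0}\bigr|,
\]
and then, using that $H^j(Z_t,\cdot)$ is $K_2^j$-Lipschitz uniformly in the first argument and that $Z_t$ is measurable w.r.t.\ the conditioning field, compare $H^j(Z_t,W_t)$ with $H^j(Z_t,\CPE{W_t}{\mathcal R_{t-\tau}^+\vee\mathcal R_0})$: the difference is at most $K_2^j|W_t^j-\CPE{W_t^j}{\mathcal R_{t-\tau}^+\vee\mathcal R_0}|$ (this is where coordinatewise Lipschitz constants matter), and since $\CPE{W_t}{\mathcal R_{t-\tau}^+\vee\mathcal R_0}$ is measurable w.r.t.\ that field, applying the conditional expectation to the pair yields the factor $2$, giving $\tilde\gamma_r^j(\tau,\mathbf Y)\le 2K_2^j\,\tilde\gamma_r^j(\tau,\mathbf W)$. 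Summing over $\tau$ and over $j$ produces $\Gamma_r(\mathbf Y)\le 2K_2\Gamma_r(\mathbf W)$. I expect the main obstacle to be purely bookkeeping: keeping the coordinatewise Lipschitz constants $K_1^j,K_2^j$ straight through the conditional-$L$-mixing definitions so that the sums collapse to $K_1$ and $K_2$, and making sure the $\mathcal R_0$-measurability of $Z$ is invoked correctly at the one place it is needed (to pull $Z_t$ out of the conditional expectation). Since this is claimed to follow the arguments of \cite[Lemma~6.3]{barkhagen2018stochastic}, I would simply present the two estimates above with the Lipschitz-in-coordinates refinement and refer to that source for the remaining routine details.
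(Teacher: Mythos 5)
Your argument is correct and is essentially the proof the paper has in mind: the paper simply defers to \cite[Lemma~A.3]{chau:kumar:rasonyi:sabanis:2019}, whose content is exactly your reduction — use the $\mathcal{R}_0$-measurability of $Z_t$ to treat it as frozen inside the conditional expectations, then run the fixed-$\theta$ estimates of Lemma~\ref{lem:below} with the coordinatewise Lipschitz constants. One small bookkeeping correction: the Lipschitz step gives $|H^j(Z_t,W_t)-H^j(Z_t,\CPE{W_t}{\mathcal{A}})|\le K_2^j|W_t-\CPE{W_t}{\mathcal{A}}|$ with the full vector norm rather than only the $j$th coordinate, so the per-coordinate bound is $\tilde\gamma_r^j(\tau,\mathbf{Y})\le 2K_2^j\sum_l\tilde\gamma_r^l(\tau,\mathbf{W})$; summing over $j$ and $\tau$ still yields $\Gamma_r(\mathbf{Y})\le 2K_2\Gamma_r(\mathbf{W})$ as claimed.
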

\begin{proof}
The proof is identical to that of \cite[Lemma~A.3]{chau:kumar:rasonyi:sabanis:2019}, noting the Lipschitz continuity.
\end{proof}

\subsection{Further notations and introduction of auxiliary processses}

Note that \Cref{assum:lip} implies
\begin{equation}\label{mulyan}
|h(\theta)-h(\theta')|\leq K_1|\theta-\theta'|,\ \theta,\theta'\in\rset^d,
\end{equation}
and Assumption \Cref{assum:dissipativity} implies
\begin{equation}\label{eq:laban}
\ps{h(\theta)}{\theta} \geq a |\theta|^2-b,\ \theta\in\rset^d.
\end{equation}
Also, \Cref{assum:lip} implies
\begin{equation}\label{jojo}
|H(\theta,x)|\leq K_1|\theta|+K_2|x|+H^*,
\end{equation}
with the constant $H^*$ defined in \eqref{eq:definition-H-*}. Define, for each $p\geq 2$,
\begin{equation}
\label{eq:definition-Vp}
V_p(\theta)= \operatorname{v}_p(|\theta|) \, , \quad \text{where for $u \in \rset_+$, $\operatorname{v}_p (u):= (1+u^2)^{p/2}$}.
\end{equation}
We use $V_{p}$  as a Lyapunov function which allows to obtain uniform bounds for the moments of various processes.
Notice that each $V_{p}$ is twice continuously differentiable and
\begin{equation}\label{solymos}
\lim_{|\theta|\to\infty}\frac{\nabla V_p(\theta)}{V_p(\theta)}=0.
\end{equation}

{}
Let $\mathcal{P}_{\, V_p}(\rset^d)$ denote the subset of $\mu\in\mathcal{P}(\rset^d)$ satisfying
$\int_{\rset^d}V_p(\theta)\,\mu(\rmd \theta)<\infty$.
The following functional is pivotal in our arguments as it is used to measure the distance between probability measures.
We define, for any $p\geq 1$ and
$\mu,\nu \in \mathcal{P}_{\, V_p}(\rset^d)$,
\begin{equation}
\label{eq:definition-w-1-p}
w_{1,p}(\mu,\nu):=\inf_{\zeta\in\mathcal{C}(\mu,\nu)}\int_{\rset^d}\int_{\rset^d} \{1\wedge |\theta-\theta'|\} \{1+V_p(\theta)+V_p(\theta') \}\zeta(\rmd \theta \rmd \theta'),
\end{equation}
Though $w_{1,p}$ is not a metric, it satisfies
\begin{equation} \label{eq:lucia}
W_1(\mu,\nu)\leq w_{1,p}(\mu,\nu),
\end{equation}
as easily seen, where $W_1$ is defined in \eqref{eq:definition-W-1}.
In the sequel we solely consider the case $p=2$, that is, $w_{1,2}$.

Our estimations are carried out below in a \emph{continuous-time} setting, so
we define and discuss a number of auxiliary {continuous-time} processes below.
First, consider $(L_t)_{t\in\rset_+}$ defined by
the stochastic differential equation (SDE)
\begin{equation}\label{kako}
\rmd L_t=-h(L_t)\, \rmd t+ \{ 2 \beta^{-1} \}^{1/2}\, \rmd B_t,\quad L_0:=\theta_0,
\end{equation}
where $(B_t)_{t \geq 0}$ is standard Brownian motion on $(\Omega,\mathcal{F},\P)$,
independent of $\mathcal{G}_{\infty}\vee \sigma(\theta_0)$ with its natural filtration
denoted by $(\mathcal{F}_t)_{t\in\rset_+}$ henceforth. The meaning of $\mathcal{F}_{\infty}$
is clear.

Equation \eqref{kako}
has a unique solution on $\rset_+$ adapted to $(\mathcal{F}_t)_{t\in\rset_+}$ since $h$ is
Lipschitz-continuous by \eqref{mulyan}.
We proceed by defining, for each $\lambda>0$ convenient time-changed versions of $L_{t}$, $t\in\mathbb{R}_{+}$:
\[
L^{\lambda}_t:=L_{\lambda t},\ t\in\rset_+.
\]
Notice that $\tilde{B}^{\lambda}_t:=B_{\lambda t}/\sqrt{\lambda}$, $t\in\rset_+$
is also a Brownian motion and
\begin{equation}\label{eq:kell}
\rmd L^{\lambda}_t=-\lambda h(L^{\lambda}_t)\, \rmd t+\{ 2 \lambda \beta^{-1} \}^{1/2} \, \rmd \tilde{B}^{\lambda}_t,\
L^{\lambda}_0=\theta_0.
\end{equation}
Define $\mathcal{F}_t^{\lambda}:=\mathcal{F}_{\lambda t}$, $\lambda \in \rset_+$, $t\in\rset_+$, the natural
filtration of $(\tilde{B}^{\lambda}_t)_{t \geq 0}$.

Our recursion \eqref{nab} is defined in terms of the data sequence $X_{n}$, $n\in\mathbb{N}$. However, it is more convenient
to \emph{freeze} the values of this sequence and to do the analysis initially with such framework. To this end,  for each $\lambda>0$ and $\mathbf{x}=(x_0,x_1,\ldots)\in(\rset^m)^{\nset}$, consider the process $(\tilde{Y}_t^{\lambda}(\mathbf{x}))_{t\in\rset_+}$ defined as
\begin{equation}\label{mah}
\rmd \tilde{Y}^{\lambda}_t(\mathbf{x})=-\lambda H(\tilde{Y}^{\lambda}_t(\mathbf{x}),
x_{\lfloor t\rfloor})\, \rmd t+\{ 2 \lambda \beta^{-1} \}^{1/2} \, \rmd \tilde{B}^{\lambda}_{t},
\end{equation}
with initial condition $\tilde{Y}^{\lambda}_0(\mathbf{x})=\theta_0$.
Due to \Cref{assum:lip}, there is a unique
solution to \eqref{mah} which is adapted to
$(\mathcal{F}_t^{\lambda})_{t\in\rset_+}$. This process provides a continuous-time
``approximation'' for our recursive procedures and plays an important role in the estimations below.

Moreover, for any given $s\ge 0$ and $t\ge s$, consider the following auxiliary process, which follows
the same dynamics as \eqref{mah} but  its starting time and value are prescribed:
\begin{equation}\label{eq:aux_proc_conts}
\rmd \tzeta{\lambda}{t}{s}{\mathbf{x}}{\theta} = -\lambda H(\tzeta{\lambda}{t}{s}{\mathbf{x}}{\theta}, x_{\lfloor t\rfloor}) \rmd t + \{ 2 \lambda \beta^{-1} \}^{1/2} \, \rmd \tilde{B}_t^{\lambda}, \qquad \mbox{for }t> s,
\end{equation}
with initial condition $\tzeta{\lambda}{s}{s}{\mathbf{x}}{\theta}=\theta\in\mathbb{R}^{d}$.{}
Note that $\tzeta{\lambda}{t}{s}{\mathbf{x}}{\tilde{Y}^{\lambda}_s(\mathbf{x})}=\tilde{Y}^{\lambda}_t(\mathbf{x})$ for all $t > s$ and
for all $\mathbf{x}=(x_0,x_1,\ldots)\in(\rset^m)^{\nset}$.

Let us now define the continuously interpolated
Euler-Maruyama approximation of $(\tilde{Y}^{\lambda}_t(\mathbf{x}))_{t\in\rset_+}$ via
\begin{equation}\label{mahh}
\rmd Y^{\lambda}_t(\mathbf{x})=-\lambda H(Y^{\lambda}_{\lfloor t\rfloor}(\mathbf{x}),{x}_{\lfloor t\rfloor})\, \rmd t
+ \{ 2 \lambda \beta^{-1} \}^{1/2} \, \rmd \tilde{B}^{\lambda}_{t},
\end{equation}
with initial condition $Y^{\lambda}_0(\mathbf{x})=\theta_0$.
Notice at this point that \eqref{mahh} can be solved by a simple recursion.

Now we explain the relationship of the latter process to $\theta^{\lambda}_{n}$, $n\in\mathbb{N}$, defined in \eqref{nab}.
If one considers $(Y^{\lambda}_t(\mathbf{X}))_{t\in\rset_+}$,
where $\mathbf{X}=(X_0,X_1,\dots)$ is a random element in $(\rset^m)^{\nset}$, then for each integer $n\in\nset$,
\begin{equation}\label{antoniojobim}
\mathcal{L}(Y^{\lambda}_n(\mathbf{X}))=\mathcal{L}(\theta_n^{\lambda}),
\end{equation}
since $\tilde{B}^{\lambda}_{n+1}-\tilde{B}^{\lambda}_{n}$ has standard Gaussian law on $\mathbb{R}^{d}$, for all $n\in\mathbb{N}$.

\subsection{Layout of the proof}

In view of the observation \eqref{antoniojobim}, the
main objective is to bound $W_1(\mathcal{L}(Y^{\lambda}_t(\mathbf{X})),\pi_{\beta})$. This task can be decomposed
as follows:
\begin{equation}
 W_1(\mathcal{L}(Y^{\lambda}_t(\mathbf{X})),\pi_{\beta}) \\
\le W_1(\mathcal{L}(Y^{\lambda}_t(\mathbf{X})),\mathcal{L}(\tilde{Y}^{\lambda}_t(\mathbf{X}))) +
W_1(\mathcal{L}(\tilde{Y}^{\lambda}_t(\mathbf{X})),\mathcal{L}(L^{\lambda}_t)) + W_1(\mathcal{L}(L^{\lambda}_t),\pi_{\beta}) \,,
\label{alambda}
\end{equation}
where $\pi_\beta$ is defined in \eqref{eq:definition-pi-beta}.
Here the last term is controlled below by standard arguments which entail that $L_{t}^{\lambda}$ converges in law to $\pi_{\beta}$
as $t\to\infty$. The drift condition \eqref{eq:genlib} below (which follows from the dissipativity
Assumption \Cref{assum:dissipativity} and Lipschitzness of the mean field $h$, see \eqref{mulyan}) ensure the applicability
of classical results.

The second term is controlled uniformly in $t$ by a quantity which is proportional to $\sqrt{\lambda}$.
To this end, we follow the line of attack used in \cite{barkhagen2018stochastic} which consists in estimating,
on intervals of length $1/\lambda$, the $L^{2}$-distance between $\tilde{Y}_{t}^{\lambda}(\mathbf{X})$ and another process
that coincides with it at the initial point of the interval but follows the averaged dynamics \eqref{eq:kell} (see \eqref{eq:definition-overline-Z} for a precise definition and Lemma~\ref{vizier} for details).
Here we rely on a maximal inequality for functionals of a conditionally $L$-mixing process, given as Theorem \ref{estim} above.
We put together estimates on separate intervals and thus obtain a bound on
$W_1(\mathcal{L}(\tilde{Y}^{\lambda}_t(\mathbf{X})),\mathcal{L}(L^{\lambda}_t))$ in Lemma \ref{intermediate}, relying on novel
results by \cite{eberle:guillin:zimmer:Trans:2019}, which give us a contraction rate for the diffusion
$L^{\lambda}_{t}$, $t\geq 0$ in the semimetric $w_{1,2}$,
see Proposition \ref{prop:contra} and, in particular, \eqref{karako}.

Finally, the first term is controlled uniformly in $t$ by a quantity which is also proportional to $\sqrt{\lambda}$, see Corollary~\ref{crux}.
This is based on Kullback-Leibler distance estimates which go back to \cite{dalalyan:tsybakov:2012} but which are
somewhat trickier as we need to employ measurable selection to pass from bounds for
$W_1(\mathcal{L}(Y^{\lambda}_t(\mathbf{x})),\mathcal{L}(\tilde{Y}^{\lambda}_t(\mathbf{x})))$ with fixed $\mathbf{x}$
to ones for $W_1(\mathcal{L}(Y^{\lambda}_t(\mathbf{X})),\mathcal{L}(\tilde{Y}^{\lambda}_t(\mathbf{X})))$.

\subsection{Moment estimates}
Define the following notation for $\lambda > 0$, $\beta > 0$, $\theta \in \rset^d$, $x \in \rset^m$,
\begin{align}
\nonumber
\tgenerator &:=
\lambda \beta^{-1} \Delta V_p(\theta)-\lambda \ps{h(\theta)}{\nabla V_p(\theta)} \, , \\
\label{eq:definition-generator-L}
\generator[\lambda][\beta][x][p][\theta] &:=
\lambda \beta^{-1} \Delta V_p(\theta)-\lambda \ps{H(\theta,x)}{\nabla V_p(\theta)} \,.
\end{align}

\begin{lemma}\label{lyapp} Assume \Cref{assum:dissipativity}. For each $p\geq 2$, $\theta \in \rset^d$, and $x \in \rset^m$,
\begin{align}
\label{eq:genlib}
&\tgenerator[1][\beta][p][\theta] \leq -C_6(p) V_p(\theta)+C_7(p) \, \\
\label{eq:genlib1}
&\generator[1][\beta][x][p][\theta] \leq -C_6(p) V_p(\theta)+C_7(p),\ \theta\in\rset^d,
\end{align}
where $C_6(p) = ap/4$, $C_7(p) = (3/4)ap \mathrm{v}_p(\overline{M}(p))$ with
\begin{equation}
\label{eq:definition-barM-p}
\overline{M}(p) = \sqrt{1/3+4b/(3a)+4d/(3a\beta)+4(p-2)/(3a\beta)} \,.
\end{equation}
\end{lemma}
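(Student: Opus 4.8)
The plan is to compute the action of the generator on $V_p$ explicitly and then reduce everything to a one-variable elementary inequality. Since $V_p(\theta)=(1+|\theta|^2)^{p/2}$ is radial, a direct differentiation gives $\nabla V_p(\theta)=p(1+|\theta|^2)^{p/2-1}\theta$ and
\[
\Delta V_p(\theta)=pd(1+|\theta|^2)^{p/2-1}+p(p-2)(1+|\theta|^2)^{p/2-2}|\theta|^2.
\]
Because $p\geq 2$ we have $p-2\geq 0$ and $|\theta|^2\leq 1+|\theta|^2$, hence $\Delta V_p(\theta)\leq p(d+p-2)(1+|\theta|^2)^{p/2-1}$. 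For the drift term I would use $\ps{h(\theta)}{\nabla V_p(\theta)}=p(1+|\theta|^2)^{p/2-1}\ps{h(\theta)}{\theta}$ together with the dissipativity bound \eqref{eq:laban}, $\ps{h(\theta)}{\theta}\geq a|\theta|^2-b$, to get $\ps{h(\theta)}{\nabla V_p(\theta)}\geq p(1+|\theta|^2)^{p/2-1}(a|\theta|^2-b)$.

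Combining these two estimates and factoring out $p(1+|\theta|^2)^{p/2-1}$ yields
\[
\tgenerator[1][\beta][p][\theta]\leq p(1+|\theta|^2)^{p/2-1}\big[\beta^{-1}(d+p-2)+b-a|\theta|^2\big].
\]
Next I would write $|\theta|^2=(1+|\theta|^2)-1$ and set $s:=1+|\theta|^2\geq 1$ and $C:=a+b+(d+p-2)/\beta$, so the right-hand side becomes $p\,s^{p/2-1}(C-as)$. The key bookkeeping observation is the algebraic identity $1+\overline{M}(p)^2=4C/(3a)$ — this is exactly what the definition \eqref{eq:definition-barM-p} encodes — equivalently $C=\tfrac{3a}{4}(1+\overline{M}(p)^2)$. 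Writing $A:=1+\overline{M}(p)^2\geq 1$, the claimed bound $\tgenerator[1][\beta][p][\theta]\leq -\tfrac{ap}{4}V_p(\theta)+\tfrac{3ap}{4}\operatorname{v}_p(\overline{M}(p))$ becomes, after moving $\tfrac{ap}{4}s^{p/2}$ to the left, equivalent to the scalar inequality
\[
s^{p/2-1}(A-s)\leq A^{p/2}\qquad\text{for all }s\geq 1.
\]

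This last inequality I would dispatch by a trivial case split: if $s\geq A$ the left-hand side is $\leq 0\leq A^{p/2}$; if $1\leq s<A$ then, using $p/2-1\geq 0$, we have $s^{p/2-1}\leq A^{p/2-1}$ and $0<A-s\leq A$, so the product is $\leq A^{p/2}$. This proves \eqref{eq:genlib}. The bound \eqref{eq:genlib1} for $\generator[1][\beta][x][p][\theta]$ is proved verbatim, the only change being that \eqref{eq:laban} is replaced by \Cref{assum:dissipativity}, $\ps{H(\theta,x)}{\theta}\geq a|\theta|^2-b$, which holds for every $x\in\rset^m$; since the resulting bound no longer involves $x$, the constants $C_6(p),C_7(p)$ are indeed $x$-independent.

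There is no genuine obstacle here: this is a routine Lyapunov/drift computation. The only two points needing a little care are (i) keeping track of the hypothesis $p\geq 2$, which is what makes the exponents $p/2-1$ and $p/2-2$ nonnegative (used both in the bound on $\Delta V_p$ and in the scalar inequality), and (ii) recognizing that the seemingly ad hoc quantity $\overline{M}(p)$ is precisely the value making the split "keep $ap/4$ of the coercive term as $-C_6 V_p$, spend the remaining $3ap/4$ to absorb the residual" close exactly, i.e. verifying the identity $1+\overline{M}(p)^2=4C/(3a)$. With that identity in hand, everything else is immediate.
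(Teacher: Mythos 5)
Your proof is correct and follows essentially the same route as the paper: the same explicit computation of $\nabla V_p$ and $\Delta V_p$, the same use of the dissipativity bound on $\ps{h(\theta)}{\theta}$, and the same threshold $\overline{M}(p)$ — your case split on $s\gtrless A$ with $s=1+|\theta|^2$, $A=1+\overline{M}(p)^2$ is exactly the paper's split on $|\theta|\gtrless\overline{M}(p)$. Your repackaging of the final step as the scalar inequality $s^{p/2-1}(A-s)\le A^{p/2}$ is a slightly cleaner verification that the constants $C_6(p)=ap/4$ and $C_7(p)=\tfrac{3}{4}ap\,\mathrm{v}_p(\overline{M}(p))$ actually close on the region $|\theta|\le\overline{M}(p)$, but the underlying argument is the one in the paper.
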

\begin{proof}
By direct calculation,
\begin{equation}\label{ross}
\tgenerator[1] = \beta^{-1} dp V_{p-2}(\theta) +\beta^{-1} p(p-2)(|\theta|^2+1)^{(p-4)/2}|\theta|^2 \\
- p V_{p-2}(\theta) \ps{h(\theta)}{\theta} \,.
\end{equation}
By \Cref{assum:dissipativity}, see also \eqref{eq:laban}, the third term of (\ref{ross}) is dominated by
\begin{equation}\label{tarkin}
-pa |\theta|^2(|\theta|^2+1)^{(p-2)/2}+pb(|\theta|^2+1)^{(p-2)/2}.
\end{equation}
Then,  for $|\theta| >\overline{M}(p)$, one observes that
$\tgenerator[1] \leq -(\fraca{ap}{4})V_p(\theta)$.
As for $|\theta| \leq \overline{M}(p)$, one obtains $\tgenerator[1] \leq (\fraca{3}{4})ap \mathrm{v}_p(\overline{M}(p))$.
Eq.~\eqref{eq:genlib} follows. The statement \eqref{eq:genlib1} follows in an identical way, noting that the constants which appear do not depend on $x \in \rset^m$.
\end{proof}
Now, we proceed with the required moment estimates which play a crucial
role in the derivation of the main result as given in Theorem \ref{main}.

\begin{lemma} \label{lem:aux_proc_conts_V_p}
Assume \Cref{imit}, \Cref{assum:lip} and \Cref{assum:dissipativity}. Let $p\geq 2$ and $\tilde{\theta} \in L^{2p-2}$. For any $t> s \ge 0$,
\begin{equation} \label{eq:aux_proc_conts_V_p}
\sup_{\mathbf{x}\in(\rset^m)^{\nset}} \E[V_p(\tzeta{\lambda}{t}{s}{\mathbf{x}}{\tilde{\theta}})] \le \rme^{-\lambda C_6(p)(t-s)} \E[V_{p}(\tilde{\theta})] + 3 \mathrm{v}_p(\overline{M}(p))
\end{equation}
where $\overline{M}(p)$ is defined in \eqref{eq:definition-barM-p}.
\end{lemma}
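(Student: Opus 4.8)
The plan is to apply Itô's formula to $t\mapsto V_p(\tzeta{\lambda}{t}{s}{\mathbf{x}}{\tilde{\theta}})$ and then take expectations to derive a differential inequality in time. Concretely, writing $Z_t := \tzeta{\lambda}{t}{s}{\mathbf{x}}{\tilde{\theta}}$ for brevity and recalling \eqref{eq:aux_proc_conts}, Itô's formula gives
\[
\rmd V_p(Z_t) = \ps{\nabla V_p(Z_t)}{-\lambda H(Z_t,x_{\lfloor t\rfloor})\,\rmd t + \{2\lambda\beta^{-1}\}^{1/2}\,\rmd \tilde{B}^{\lambda}_t} + \lambda\beta^{-1}\Delta V_p(Z_t)\,\rmd t,
\]
so the drift part of $\rmd V_p(Z_t)$ is exactly $\generator[\lambda][\beta][x_{\lfloor t\rfloor}][p][Z_t]\,\rmd t = \lambda\,\mathrm{(the\ version\ with\ }\lambda=1\mathrm{)}$; more precisely $\generator[\lambda][\beta][x][p][\theta] = \lambda\,\generator[1][\beta][x][p][\theta]$. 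The key input is the drift estimate \eqref{eq:genlib1} from Lemma~\ref{lyapp}, which bounds this by $\lambda(-C_6(p)V_p(Z_t) + C_7(p))$, uniformly in $x\in\rset^m$ and hence in the whole sequence $\mathbf{x}$.

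First I would justify that the stochastic-integral (martingale) term has zero expectation; this requires a localization/stopping-time argument since $\nabla V_p$ grows polynomially, but it is routine given that one can first establish finite moments of $\sup_{r\in[s,t]}|Z_r|$ of all orders (using the linear growth \eqref{jojo} of $H$ and standard SDE moment bounds, together with $\tilde\theta\in L^{2p-2}\subseteq L^2$). After removing the martingale term, I obtain
\[
\frac{\rmd}{\rmd t}\E[V_p(Z_t)] \le -\lambda C_6(p)\,\E[V_p(Z_t)] + \lambda C_7(p), \qquad t>s,
\]
with $\E[V_p(Z_s)] = \E[V_p(\tilde\theta)] < \infty$ (finite because $V_p(\theta)\le 2^{p/2}(1+|\theta|^p)$ up to constants and $\tilde\theta\in L^{2p-2}$, and $2p-2\ge p$ for $p\ge 2$). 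Applying Grönwall's inequality (or just integrating the explicit linear ODE bound) yields
\[
\E[V_p(Z_t)] \le \rme^{-\lambda C_6(p)(t-s)}\E[V_p(\tilde\theta)] + \frac{C_7(p)}{C_6(p)}\bigl(1-\rme^{-\lambda C_6(p)(t-s)}\bigr) \le \rme^{-\lambda C_6(p)(t-s)}\E[V_p(\tilde\theta)] + \frac{C_7(p)}{C_6(p)}.
\]
Finally, plugging in $C_6(p)=ap/4$ and $C_7(p)=(3/4)ap\,\mathrm{v}_p(\overline{M}(p))$ from Lemma~\ref{lyapp} gives $C_7(p)/C_6(p) = 3\,\mathrm{v}_p(\overline{M}(p))$, which is exactly the constant in \eqref{eq:aux_proc_conts_V_p}. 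Since none of the bounds depend on $\mathbf{x}$, the supremum over $\mathbf{x}\in(\rset^m)^{\nset}$ is harmless.

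The main obstacle is the rigorous justification that the local martingale $\int_s^t \ps{\nabla V_p(Z_r)}{\{2\lambda\beta^{-1}\}^{1/2}\rmd\tilde B^\lambda_r}$ is a true martingale (equivalently, that the expectation of the Itô correction and drift terms are finite and the stopping-time limit passes through), which hinges on a priori polynomial moment bounds for the auxiliary process $\tzeta{\lambda}{\cdot}{s}{\mathbf{x}}{\tilde\theta}$ uniformly on compact time intervals; this is standard but must be done carefully because $\tilde\theta$ is only assumed to lie in $L^{2p-2}$ rather than in all $L^q$. A clean way around it is the usual recipe: stop at $\tau_N=\inf\{r\ge s:|Z_r|\ge N\}$, take expectations (legitimate, as the stopped stochastic integral is a genuine martingale), obtain the differential inequality for $\E[V_p(Z_{t\wedge\tau_N})]$, apply Grönwall to get an $N$-uniform bound, and then let $N\to\infty$ via Fatou on the left and dominated/monotone convergence on the right.
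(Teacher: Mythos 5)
Your proposal is correct and follows essentially the same route as the paper's proof: apply It\^o's formula to $V_p$ along the auxiliary process, use the $x$-uniform drift bound \eqref{eq:genlib1} from Lemma~\ref{lyapp} (noting $\generator[\lambda][\beta][x][p][\theta]=\lambda\,\generator[1][\beta][x][p][\theta]$), kill the stochastic integral in expectation using $\tilde\theta\in L^{2p-2}$ and $2p-2\ge p$, and integrate the resulting linear differential inequality to get the constant $C_7(p)/C_6(p)=3\,\mathrm{v}_p(\overline{M}(p))$. The only difference is that you spell out the localization argument justifying that the local martingale is a true martingale, which the paper handles more briefly by asserting $\sup_{s\le u\le t}\E[|\nabla V_p(\tzeta{\lambda}{u}{s}{\mathbf{x}}{\tilde\theta})|^2]<\infty$.
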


\begin{proof} We note that
$2p-2\geq p$ for $p \geq 2$, hence $\E[V_{p}(\tilde{\theta})]<\infty$.
For any fixed sequence $\mathbf{x} \in (\rset^m)^{\nset}$ and $t>s\geq 0$, by It\^o's formula, one obtains almost surely,
\begin{equation*}
\rmd V_p(\tzeta{\lambda}{t}{s}{\mathbf{x}}{\tilde{\theta}})=
\generator[\lambda][\beta][x_{\lfloor t \rfloor}][p][\tzeta{\lambda}{t}{s}{\mathbf{x}}{\tilde{\theta}}] \rmd t + \{2\lambda \beta^{-1}\}^{1/2} \, \ps{\nabla V_p(\tzeta{\lambda}{t}{s}{\mathbf{x}}{\tilde{\theta}})}{ \rmd \tilde{B}^{\lambda}_t}\\
 \,,
\end{equation*}
Since $\sup_{0\leq s\leq t} \E[ |\nabla V_p(\tzeta{\lambda}{t}{s}{\mathbf{x}}{\tilde{\theta}})|^2]<\infty$ using $\tilde{\theta}\in L^{2p-2}$,
the expectation of the stochastic integral vanishes and
\begin{equation*}
\E[V_p(\tzeta{\lambda}{t}{s}{\mathbf{x}}{\tilde{\theta}})] = \E[V_p(\tilde{\theta})]
+\int_s^t\E\left[\generator[\lambda][\beta][x_{\lfloor u \rfloor}][p][\tzeta{\lambda}{u}{s}{\mathbf{x}}{\tilde{\theta}}]\right]\, \rmd u,
\end{equation*}
Differentiating both sides and using Lemma~\ref{lyapp} yields that
\begin{equation} \label{derivative}
\frac{\rmd}{\rmd t}\E[V_p(\tzeta{\lambda}{t}{s}{\mathbf{x}}{\tilde{\theta}})]  	
 =  \E\left[\generator[\lambda][\beta][x_{\lfloor t \rfloor}][p][\tzeta{\lambda}{t}{s}{\mathbf{x}}{\tilde{\theta}}]\right] \leq  -\lambda C_6(p)\E[ V_p(\tzeta{\lambda}{t}{s}{\mathbf{x}}{\tilde{\theta}})] +\lambda C_7(p).
\end{equation}
Hence, by calculating the derivative of $\rme^{\lambda C_6(p) (t-s)} \E[V_p(\tzeta{\lambda}{t}{s}{\mathbf{x}}{\tilde{\theta}})] $ and in view of the above relationship \eqref{derivative}, one obtains \eqref{eq:aux_proc_conts_V_p}.
\end{proof}

\begin{corollary} \label{lem:moments}
Assume \Cref{imit}, \Cref{assum:lip} and \Cref{assum:dissipativity}. For any integer $p\geq 2$ and $t \in \rset_+$,
\begin{equation}\label{momenteq1}
\sup_{\mathbf{x}\in(\rset^m)^{\nset}} \E[V_p(\tilde{Y}^{\lambda}_t(\mathbf{x}))]
\leq \rme^{-\lambda C_6(p) t} \E[V_p(\theta_0)]+ 3\mathrm{v}_p(\overline{M}(p)),
\end{equation}
where  $\overline{M}(p)$ is defined in \eqref{eq:definition-barM-p}.
\end{corollary}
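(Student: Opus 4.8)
The plan is to obtain \eqref{momenteq1} as a direct specialization of Lemma~\ref{lem:aux_proc_conts_V_p}. First I would apply that lemma with starting time $s=0$ and starting value $\tilde{\theta}=\theta_0$. The hypothesis $\tilde{\theta}\in L^{2p-2}$ is satisfied for every $p\geq 2$ by \Cref{imit}, since $|\theta_0|\in\bigcap_{q\geq 1}L^q$; in particular $\E[V_p(\theta_0)]<\infty$, so the right-hand side of \eqref{eq:aux_proc_conts_V_p} is finite.

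Next I would identify the relevant processes. Because $\tilde{Y}^{\lambda}_0(\mathbf{x})=\theta_0$ and the auxiliary SDE \eqref{eq:aux_proc_conts} with $s=0$ has exactly the dynamics \eqref{mah}, the relation $\tzeta{\lambda}{t}{s}{\mathbf{x}}{\tilde{Y}^{\lambda}_s(\mathbf{x})}=\tilde{Y}^{\lambda}_t(\mathbf{x})$ recorded just after \eqref{eq:aux_proc_conts} gives, for $s=0$,
\[
\tzeta{\lambda}{t}{0}{\mathbf{x}}{\theta_0}=\tzeta{\lambda}{t}{0}{\mathbf{x}}{\tilde{Y}^{\lambda}_0(\mathbf{x})}=\tilde{Y}^{\lambda}_t(\mathbf{x}),\qquad t>0,
\]
and the identity holds trivially at $t=0$. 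Substituting this, together with $\E[V_p(\tilde{\theta})]=\E[V_p(\theta_0)]$, into \eqref{eq:aux_proc_conts_V_p} yields \eqref{momenteq1} for all $t>0$; for $t=0$ the left-hand side equals $\E[V_p(\theta_0)]$ while the right-hand side equals $\E[V_p(\theta_0)]+3\mathrm{v}_p(\overline{M}(p))\ge\E[V_p(\theta_0)]$, so the inequality is immediate. Taking the supremum over $\mathbf{x}\in(\rset^m)^{\nset}$, which is already built into the bound of Lemma~\ref{lem:aux_proc_conts_V_p}, completes the argument.

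There is essentially no obstacle here: the corollary is just Lemma~\ref{lem:aux_proc_conts_V_p} evaluated at $(s,\tilde{\theta})=(0,\theta_0)$. The only steps requiring a word of justification are the verification that $\theta_0$ meets the moment requirement of that lemma (handled by \Cref{imit}) and the identification of $\tzeta{\lambda}{t}{0}{\mathbf{x}}{\theta_0}$ with $\tilde{Y}^{\lambda}_t(\mathbf{x})$, which follows from pathwise uniqueness of solutions under \Cref{assum:lip}.
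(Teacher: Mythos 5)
Your argument is correct and is exactly the paper's proof: the corollary is obtained by applying Lemma~\ref{lem:aux_proc_conts_V_p} with $(s,\tilde{\theta})=(0,\theta_0)$ and identifying $\tzeta{\lambda}{t}{0}{\mathbf{x}}{\theta_0}$ with $\tilde{Y}^{\lambda}_t(\mathbf{x})$. Your extra checks (the moment hypothesis via \Cref{imit} and the trivial $t=0$ case) are fine but not essential.
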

\begin{proof}
By noting that $\tilde{Y}^{\lambda}_t(\mathbf{x}) = \tzeta{\lambda}{t}{0}{\mathbf{x}}{\theta_0}$, one immediately recovers the desired result from Lemma~\ref{lem:aux_proc_conts_V_p}.
\end{proof}
\begin{corollary}\label{cor:SDE_SGLDsmoments}
Assume \Cref{imit}, \Cref{assum:lip} and \Cref{assum:dissipativity}. For any integer $p\geq 2$ and $t \in \rset_+$,
\begin{equation}\label{SDE_SGLDsmoments_q1}
\E[V_p(\tilde{Y}^{\lambda}_t(\mathbf{X}))]
\leq \rme^{-\lambda C_6(p)t} \E[V_p(\theta_0)]+ 3\mathrm{v}_p(\overline{M}(p)),
\end{equation}
where the constant $\overline{M}(p)$ is defined in \eqref{eq:definition-barM-p}.
\end{corollary}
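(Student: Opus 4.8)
The plan is to reduce the claim to Corollary~\ref{lem:moments} by conditioning on (equivalently, integrating out) the data stream $\mathbf{X}$. Recall that, for each \emph{fixed} $\mathbf{x}\in(\rset^m)^{\nset}$, the process $\tilde{Y}^{\lambda}_t(\mathbf{x})$ solves the SDE~\eqref{mah} started from $\theta_0$ and driven by the Brownian motion $\tilde{B}^{\lambda}$, while $\mathbf{X}=(X_0,X_1,\dots)$ is $\mathcal{G}_{\infty}$-measurable; by the standing independence hypotheses ($\theta_0$, $\mathcal{G}_{\infty}$ independent, and $B$ independent of $\mathcal{G}_{\infty}\vee\sigma(\theta_0)$), the pair $(\theta_0,\tilde{B}^{\lambda})$ is independent of $\mathbf{X}$. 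So the first step is to record that $(\mathbf{x},\omega)\mapsto\tilde{Y}^{\lambda}_t(\mathbf{x})(\omega)$ admits a jointly measurable version with respect to $\mathcal{B}((\rset^m)^{\nset})\otimes\mathcal{F}$; this is routine, e.g.\ by realising the solution as an a.s.\ limit of Picard iterates (each manifestly jointly measurable) or by invoking continuous dependence of solutions on the locally Lipschitz coefficients. In particular $\varphi(\mathbf{x}):=\E[V_p(\tilde{Y}^{\lambda}_t(\mathbf{x}))]\in[0,\infty]$ is a measurable function of $\mathbf{x}$.

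The second step is the "freezing" argument. Write $\tilde{Y}^{\lambda}_t(\mathbf{x})=F_t(\mathbf{x},\cdot)$ for the jointly measurable map above, where the second argument carries the randomness of $(\theta_0,\tilde{B}^{\lambda})$. Since $V_p\circ F_t\geq 0$ and $\mathbf{X}$ is independent of $(\theta_0,\tilde{B}^{\lambda})$, Tonelli's theorem applied on the product gives
\[
\E[V_p(\tilde{Y}^{\lambda}_t(\mathbf{X}))]=\int_{(\rset^m)^{\nset}}\varphi(\mathbf{x})\,\mathcal{L}(\mathbf{X})(\rmd \mathbf{x})\leq \sup_{\mathbf{x}\in(\rset^m)^{\nset}}\varphi(\mathbf{x}).
\]
It remains to insert the uniform bound: Corollary~\ref{lem:moments} states precisely that
\[
\sup_{\mathbf{x}\in(\rset^m)^{\nset}}\varphi(\mathbf{x})\leq \rme^{-\lambda C_6(p)t}\,\E[V_p(\theta_0)]+3\mathrm{v}_p(\overline{M}(p)),
\]
and since the right-hand side does not depend on $\mathbf{x}$, combining the two displays yields \eqref{SDE_SGLDsmoments_q1}.

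There is no real obstacle here: the statement is an immediate consequence of Corollary~\ref{lem:moments} together with the fact that $\tilde{B}^{\lambda}$ and $\theta_0$ are independent of the data sequence. The only point needing (entirely routine) care is the joint measurability of $\mathbf{x}\mapsto\tilde{Y}^{\lambda}_t(\mathbf{x})$, which legitimises the substitution $\mathbf{x}\rightsquigarrow\mathbf{X}$ while leaving the law of the driving data $(\theta_0,\tilde{B}^{\lambda})$ untouched; once this is in place the uniform-in-$\mathbf{x}$ estimate of Corollary~\ref{lem:moments} transfers verbatim.
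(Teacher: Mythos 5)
Your proposal is correct and follows essentially the same route as the paper, which simply invokes Corollary~\ref{lem:moments} together with the uniformity in $\mathbf{x}$ of the bound; you merely make explicit the (routine) conditioning/Tonelli step and the joint measurability of $\mathbf{x}\mapsto\tilde{Y}^{\lambda}_t(\mathbf{x})$ that the paper leaves implicit.
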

\begin{proof}
Due to the fact that the dissipativity  condition \Cref{assum:dissipativity}  is uniform in $x$, all estimates are independent of $x$ and therefore the result follows immediately from Corollary~\ref{lem:moments}.
\end{proof}

While the moment estimates for $\tilde{Y}^{\lambda}(\mathbf{x})$ have been rather straightforward, similar bounds for
$Y^{\lambda}_{t}(\mathbf{x})$ require more substantial calculations, based again on dissipativity, see Assumption \Cref{assum:dissipativity}.

\begin{lemma}
\label{lem:moment_SGLD_2p}
Assume \Cref{imit}, \Cref{assum:lip} and \Cref{assum:dissipativity}. For any   $\lambda < \lambda_{\max}$, as given in \eqref{eq:definition-lambda-max}, $n\in\nset$, $t \in \ocint{n,n+1}$, $p \in \nset^*$, and any sequence $\mathbf{x} \in (\rset^m)^{\nset}$,
\begin{multline}\label{eq:moment_SGLD_2p}
\E[|Y^{\lambda}_t(\mathbf{x})|^{2p}] 	 \leq (1-a\lambda(t-n))(1-a\lambda)^n \E|\theta_0|^{2p}
\\+\lambda a M(p,d) \left\{|x_n|^{2p} + (1-a\lambda(t-n)) \sum\nolimits_{j=1}^{n} \left(1-a\lambda\right)^{j-1}|x_{n-j}|^{2p}  \right\} + \widehat{M}(p,d),
\end{multline}
where the constants $M(p,d)$ and $\widehat{M}(p,d)$ are given by
\begin{equation}\label{eq:def:M_and_Mhat}
     M(p,d)  = \left(2\lambda_{\max}+4/a\right)^{p-1} \left[1/a+d\tilde{M}^2(p)\right]  \cmoment^p
\end{equation}
and
\begin{multline*}
\widehat{M}(p,d)  = M(p,d) (c_2/c_0)^p + \tilde{M}^2(p)\left(\lambda_{\max}+2/a\right)^{p-1} \left(d+(1/\beta)^{p-1}\left(2dp(2p-1)\right)^p\right)
\end{multline*}
with
\begin{equation}\label{eq:def:M_tilde}
\tilde{M}(p) := 2^p\sqrt{p(2p-1)/(a\beta)}.
\end{equation}
and $\cmoment$ and $\cmomentone$ are defined by
\begin{equation}
\label{eq:definition-cmoment}
\cmoment = 8 K_2^2 \lambda_{\max}, \, \cmomentone= a^{-1}(c_2 +  2d\beta^{-1}) \quad  \text{and}  \quad c_2=2b+8\lambda_{\max} (H^*)^2 \, .
\end{equation}
In particular,
\begin{equation}\label{eq:moment_SGLD_2}
\E |Y^{\lambda}_{n+1}(\mathbf{x})|^2 \leq
 (1-a\lambda)^{n+1} \E|\theta_0|^2 + \lambda\cmoment \sum_{j=0}^{n} (1-a\lambda)^{j} | x_{n -j}|^2  + \cmomentone \, ,
\end{equation}
\end{lemma}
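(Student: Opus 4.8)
The plan is to establish the discrete recursion \eqref{eq:moment_SGLD_2p} directly from the Euler--Maruyama scheme \eqref{mahh}, exploiting the fact that on each interval $(n,n+1]$ the process $Y^{\lambda}_t(\mathbf{x})$ is simply a linear (in $t$) drift plus a Brownian increment starting from the frozen value $Y^{\lambda}_n(\mathbf{x})$. First I would write $Y^{\lambda}_t(\mathbf{x}) = Y^{\lambda}_n(\mathbf{x}) - \lambda(t-n)H(Y^{\lambda}_n(\mathbf{x}),x_n) + \{2\lambda\beta^{-1}\}^{1/2}(\tilde{B}^{\lambda}_t - \tilde{B}^{\lambda}_n)$ and expand $|Y^{\lambda}_t(\mathbf{x})|^{2p}$. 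The key algebraic device is to treat this as $|A + G|^{2p}$ where $A$ is $\mathcal{F}^{\lambda}_n$-measurable and $G$ is an independent centered Gaussian of variance $2\lambda\beta^{-1}(t-n)\le 2\lambda_{\max}\beta^{-1}$; using a Young-type inequality $|A+G|^{2p}\le (1+\epsilon)^{p-1}|A|^{2p}/\text{(something)} + \dots$ together with the Gaussian moment bounds $\E|G|^{2p}\le (2\lambda\beta^{-1}(t-n))^p (d(2p-1))^p$ or similar, one separates the deterministic drift part from the noise part; this is where the constants $\tilde{M}(p)$, $d$, $\beta$ in \eqref{eq:def:M_tilde} and $\widehat M(p,d)$ enter.

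Next I would handle the core term $\E[|A|^{2p}]$ where $A = Y^{\lambda}_n(\mathbf{x}) - \lambda(t-n)H(Y^{\lambda}_n(\mathbf{x}),x_n)$. Writing $s:=t-n\in(0,1]$, expand $|A|^2 = |Y^{\lambda}_n|^2 - 2\lambda s\ps{H(Y^{\lambda}_n,x_n)}{Y^{\lambda}_n} + \lambda^2 s^2|H(Y^{\lambda}_n,x_n)|^2$. Here the dissipativity Assumption~\ref{assum:dissipativity} gives $\ps{H(Y^{\lambda}_n,x_n)}{Y^{\lambda}_n}\ge a|Y^{\lambda}_n|^2 - b$, and the linear growth \eqref{jojo} gives $|H(Y^{\lambda}_n,x_n)|^2 \le 3K_1^2|Y^{\lambda}_n|^2 + 3K_2^2|x_n|^2 + 3(H^*)^2$ (or an analogous expansion). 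Combining, and using that $\lambda\le\lambda_{\max}\le a/(2K_1^2)$ so that the $-2a\lambda s$ term dominates the $3\lambda^2 s^2 K_1^2$ term, one obtains a bound of the shape $|A|^2 \le (1-a\lambda s)|Y^{\lambda}_n|^2 + \lambda s\,C|x_n|^2 + \lambda s\,C'$ after also absorbing the constant $b$ and $(H^*)^2$ contributions; the constants $c_0 = 8K_2^2\lambda_{\max}$ and $c_2 = 2b + 8\lambda_{\max}(H^*)^2$ in \eqref{eq:definition-cmoment} are exactly what come out of this step. Then raising to the $p$-th power via $(\alpha u + \beta v + \gamma)^p \le$ (a convex combination bound, using $(1-a\lambda s) + a\lambda s = 1$ for the first two pieces and a separate Young inequality for $\gamma$) yields $\E[|A|^{2p}]\le (1-a\lambda s)\E[|Y^{\lambda}_n|^{2p}] + \lambda s\,M(p,d)(\text{const})|x_n|^{2p} + (\text{const})$, which gives the one-step inequality $\E[|Y^{\lambda}_t(\mathbf{x})|^{2p}] \le (1-a\lambda(t-n))\E[|Y^{\lambda}_n(\mathbf{x})|^{2p}] + \lambda a M(p,d)|x_n|^{2p} + (\text{remainder})$.

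Having the one-step inequality at $t = n+1$ (i.e. $s=1$), $\E[|Y^{\lambda}_{n+1}|^{2p}]\le(1-a\lambda)\E[|Y^{\lambda}_n|^{2p}] + \lambda a M(p,d)|x_n|^{2p} + R$, I would then iterate (a discrete Gr\"onwall / geometric-sum argument): unrolling from $n$ down to $0$ gives the $(1-a\lambda)^{n+1}\E|\theta_0|^{2p}$ leading term, the weighted sum $\sum_{j=1}^{n}(1-a\lambda)^{j-1}|x_{n-j}|^{2p}$ over the data inputs, and $R\sum_{j\ge 0}(1-a\lambda)^j \le R/(a\lambda)$ for the constant remainder (this is where $\widehat M(p,d)$ including the $(c_2/c_0)^p$ piece is assembled). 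Finally for a general $t\in(n,n+1]$ one applies the one-step inequality once more with $s = t-n$ to $\E[|Y^{\lambda}_n|^{2p}]$ already bounded, producing the extra factor $(1-a\lambda(t-n))$ multiplying the $n$-indexed quantities and the separate $|x_n|^{2p}$ term without that factor, exactly as displayed in \eqref{eq:moment_SGLD_2p}; the special case \eqref{eq:moment_SGLD_2} is just $p=1$, $t=n+1$, read off directly. The main obstacle is purely bookkeeping: tracking the numerous constants through the $p$-th power expansion and the choice of the right Young-inequality split so that the claimed explicit forms of $M(p,d)$, $\widehat M(p,d)$, $\tilde M(p)$ come out precisely, and verifying that $\lambda\le\lambda_{\max}$ is enough to keep all the ``error'' terms of the drift expansion subordinate to the contractive factor $1-a\lambda$; no conceptual difficulty is expected, but the constant-chasing must be done carefully.
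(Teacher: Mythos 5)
Your proposal follows essentially the same route as the paper: the same decomposition of $Y^{\lambda}_t(\mathbf{x})$ into the frozen drift term $\tD$ plus an independent Gaussian increment, the same one-step contraction $|\tD|^2\le(1-a\lambda(t-n))|Y^{\lambda}_n(\mathbf{x})|^2+\lambda(t-n)(\cmoment|x_n|^2+c_2)$ from dissipativity and the bound $\lambda\le a/(2K_1^2)$, the same Young-type inequality \eqref{eq:useful-bound} for the $p$-th power, and the same discrete unrolling. The only point you gloss over is the mixed term of order $\lambda(t-n)|\tD|^{2p-2}$ produced by the $|U|^2$ part of the binomial expansion, which the paper absorbs into the contraction via the case split $|Y^{\lambda}_n(\mathbf{x})|\gtrless\sqrt{d}\tilde{M}(p)$ (this, not the Gaussian moment bound itself, is where $\tilde{M}(p)$ originates); a standard Young-inequality absorption as you implicitly suggest handles it equivalently.
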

\begin{proof}
For any $n \in \nset$ and $t\in \ocint{n, n+1}$, define
$\tD[n][t][\bx] = Y^\lambda_n(\bx) - \lambda H(Y^{\lambda}_n(\bx), x_n)(t-n)$.
It is easily seen that for $t\in \ocint{n, n+1}$
\begin{align*}
\CPE{|Y^\lambda_t(\bx)|^{2}}{Y^\lambda_n(\bx)} 	
= |\tD[n][t][\bx]|^2 + (2 \lambda/\beta)d (t-n) .
\end{align*}
Using \Cref{assum:lip} and \Cref{assum:dissipativity}, one obtains for all $\lambda \leq \lambda_{\max}$,
\begin{align}
&|\tD[n][t][\bx]|^2	 = |Y_n^\lambda(\bx)|^2-2\lambda (t-n)
\ps{Y_n^\lambda(\bx)}{H(Y_n^\lambda(\bx), x_n)}+\lambda^2|H(Y_n^\lambda(\bx), x_n)(t-n)|^2 \nonumber \\
& \leq (1-2a\lambda(t-n))|Y_n^\lambda(\bx)|^2+2b\lambda(t-n)+2\lambda^2(t-n)^2\{K_1^2|Y_n^\lambda(\bx)|^2 +4K_2^2|x_n|^2+4(H^*)^2\} \nonumber \\
& \leq (1-a\lambda(t-n))|Y_n^\lambda(\bx)|^2+\lambda (t-n)(\cmoment|x_n|^2+ c_2). \label{long}
\end{align}
The desired result \eqref{eq:moment_SGLD_2} follows from an easy induction. For higher moments, the calculation is somewhat more involved. To this end, one calculates, by setting $\tU= \{ 2 \lambda \beta^{-1} \}^{1/2}(\tilde{B}_t^{\lambda}-\tilde{B}_n^{\lambda})$, for $t \in \coint{n,n+1}$,
\begin{multline*}
\E[|Y^{\lambda}_t(\mathbf{x})|^{2p}|Y^{\lambda}_n(\mathbf{x})]
\leq |\tD[n][t][\bx]|^{2p}+2p\E\left[\left.|\tD[n][t][\bx]|^{2p-2}
\ps{\tD[n][t][\bx]}{\tU}\right|Y^{\lambda}_n(\mathbf{x})\right] \\
+\sum_{k = 2}^{2p} \binom{2p}{k}
\E\left[\left. |\tD[n][t][\bx]|^{2p-k}\left|\tU\right|^{k}\right|Y^{\lambda}_n(\mathbf{x})\right], \end{multline*}
where the  last inequality is due to Lemma \ref{trivial}. The following inequality is used in the subsequent analysis
\begin{equation}
\label{eq:useful-bound}
(r+s)^p \leq (1+\epsilon)^{p-1} r^p + (1+\epsilon^{-1})^{p-1} s^p,
\end{equation}
where $p\ge 2$, $r,\,s \ge 0$ and $\epsilon>0$. We continue as follows
\begin{align*}
&\E[|Y^{\lambda}_t(\mathbf{x})|^{2p}|Y^{\lambda}_n(\mathbf{x})] \\
&\le |\tD[n][t][\bx]|^{2p}+ \sum_{l = 0}^{2(p-1)} \binom{2p}{l+2} \CPE{|\tD[n][t][\bx]|^{2(p-1)-l}|\tU |^{l}
|\tU|^2}{Y^{\lambda}_n(\mathbf{x})}  \nonumber \\
&\leq |\tD[n][t][\bx]|^{2p}+\binom{2p}{2} \sum_{l = 0}^{2(p-1)} \binom{2(p-1)}{l} \CPE{|\tD[n][t][\bx]|^{2(p-1)-l} |\tU|^l |\tU|^2 }{Y^{\lambda}_n(\mathbf{x})}  \nonumber \\
&= |\tD[n][t][\bx]|^{2p}+ p(2p-1)
\CPE{\left(|\tD[n][t][\bx]|+ |\tU|\right)^{2p-2}
|\tU|^2 }{Y^{\lambda}_n(\mathbf{x})}  \nonumber \\
& = |\tD[n][t][\bx]|^{2p} +\lambda(t-n)2^{2p-2}p(2p-1)d \beta^{-1}|\tD[n][t][\bx]|^{2p-2} +2^{2p-3}p(2p-1)\E\left[|\tU|^{2p}\right]
\end{align*}
which yields, using moment estimates  given in \cite[Theorem~7.1, Chapter~1]{mao:1997}, that
\begin{multline}\label{sgldbdap1}
\E[|Y^{\lambda}_t(\mathbf{x})|^{2p}|Y^{\lambda}_n(\mathbf{x})]
\leq |\tD[n][t][\bx]|^{2p} +\lambda(t-n)2^{2p-2}p(2p-1) d\beta^{-1} |\tD[n][t][\bx]|^{2p-2}  \\
+2^{3p-3}\left(\lambda(t-n)\right)^p(p(2p-1))^{p+1}\left\{d \beta^{-1} \right\}^p.
\end{multline}
Using \eqref{eq:definition-cmoment} and the inequalities \eqref{long} and \eqref{eq:useful-bound} with $\epsilon= a \lambda (t-n)/2$, one calculates
\begin{align}\label{sgldbdap2}
&|\tD[n][t][\bx]|^{2p} 	\leq \{(1-a\lambda(t-n))|Y^{\lambda}_n(\mathbf{x})|^2+\lambda  (t-n)(\cmoment|x_n|^2+c_2)\}^p \nonumber\\
					& \leq (1+\frac{a\lambda(t-n)}{2})^{p-1}(1-a\lambda(t-n))^p|Y^{\lambda}_n(\mathbf{x})|^{2p} +(1+\frac{2}{a\lambda(t-n)})^{p-1}\lambda^p (t-n)^p (\cmoment|x_n|^2+c_2)^{p} \nonumber\\
					& \leq
a_{n,t}^{\lambda,p} |Y^{\lambda}_n(\mathbf{x})|^{2p} + b_{n,t}^{\lambda,p} \,
\end{align}
where $a_{n,t}^{\lambda,p} = (1 -\fraca{a\lambda(t-n)}2)^{p-1}(1-a\lambda(t-n))$ and
$b_{n,t}^{\lambda,p}= (\lambda(t-n)+\fraca{2}{a})^{p-1}\lambda (t-n) (\cmoment|x_n|^2+c_2)^{p}$.
Substituting \eqref{sgldbdap2} into \eqref{sgldbdap1} yields
\begin{multline} \label{long_calc_mom}
\E[|Y_t^{\lambda}(\mathbf{x})|^{2p}|Y^{\lambda}_n(\mathbf{x})] 	\leq  a_{n,t}^{\lambda,p} |Y^{\lambda}_n(\mathbf{x})|^{2p} +b_{n,t}^{\lambda,p}
+\lambda(t-n)2^{2p-2}p(2p-1) d \beta^{-1} \\
\times \bigg[ a_{n,t}^{\lambda,p-1}|Y^{\lambda}_n(\mathbf{x})|^{2(p-1)} + b_{n,t}^{\lambda,p-1} \bigg] +2^{3p-3}\left(\lambda(t-n)\right)^p(p(2p-1))^{p+1}\left(d \beta^{-1} \right)^p.
\end{multline}
Define $\tilde{M}(p)$ as in \eqref{eq:def:M_tilde} and observe that for $|Y^{\lambda}_n(\mathbf{x})| \ge \sqrt{d}\tilde{M}(p)$
\[
\frac{a\lambda(t-n)}{4}|Y^{\lambda}_n(\mathbf{x})|^{2p} \ge \lambda(t-n)2^{2p}p(2p-1)\frac{d}{4\beta}|Y^{\lambda}_n(\mathbf{x})|^{2(p-1)}.
\]
Consequently, on $\{|Y^{\lambda}_n(\mathbf{x})| \ge \sqrt{d}\tilde{M}(p)\}$ the inequality \eqref{long_calc_mom} yields
\begin{align} \label{long_calc_mom2}
&\E[|Y_t^{\lambda}(\mathbf{x})|^{2p}|Y^{\lambda}_n(\mathbf{x})] 	\leq  (1-\fraca{a\lambda(t-n)}{4}) a_{n,t}^{\lambda,p-1}|Y^{\lambda}_n(\mathbf{x})|^{2p} + b_{n,t}^{\lambda,p} \nonumber\\
							&+\lambda(t-n)2^{2p-2}p(2p-1)(\fraca{d}{\beta}) b_{n,t}^{\lambda,p-1}
+\lambda^p(t-n)^p2^{3p-3}(p(2p-1))^{p+1}\left(\fraca{d}{\beta}\right)^p \nonumber \\
\leq & (1-a\lambda(t-n))|Y^{\lambda}_n(\mathbf{x})|^{2p} + \lambda(t-n)a\left(M(p,d)|x_n|^{2p} + \widehat{M}(p,d) \right),
\end{align}
where the constants $M(p,d)$ and $\widehat{M}(p,d)$ are defined in \eqref{eq:def:M_and_Mhat}. 
Moreover, on $\{|Y^{\lambda}_n(\mathbf{x})| < \sqrt{d}\tilde{M}(p)\}$ the inequality \eqref{long_calc_mom} yields again
\begin{multline} \label{long_calc_mom3}
\E[|Y_t^{\lambda}(\mathbf{x})|^{2p}|Y^{\lambda}_n(\mathbf{x})]
\leq  (1-a\lambda(t-n))|Y^{\lambda}_n(\mathbf{x})|^{2p} + \lambda(t-n)a\left(M(p,d)|x_n|^{2p} + \widehat{M}(p,d) \right)
\end{multline}
Eq.~\eqref{eq:moment_SGLD_2p} follows immediately from \eqref{long_calc_mom3} and \eqref{long_calc_mom2}. \end{proof}
\begin{remark}
{\rm One notes here that $\left(\E[|Y^{\lambda}_t(\mathbf{x})|^{2p}] \right)^{1/(2p)}$ is of order $\sqrt{d}$, where $d$ denotes the dimension of the problem.}
\end{remark}

\begin{corollary} \label{aux_proc_conts_fourth_and_Y}
Assume \Cref{imit}, \Cref{assum:lip} and \Cref{assum:dissipativity}.  For each $0<\lambda\leq \lambda_{\max}$ and  $0\leq s\leq t$, let $\tzeta{\lambda}{t}{s}{\mathbf{x}}{\theta}$ be the solution of \eqref{eq:aux_proc_conts}
with initial condition $\theta$.
Then for each $k\geq 1$,
\begin{multline}\label{fourth_mom_aux_proc_and_Y}
\E[V_4(\tzeta{\lambda}{kT}{(k-1)T}{\mathbf{x}}{Y^{\lambda}_{(k-1)T}(\mathbf{x})})] \le 2  \rme^{-a} (1-a\lambda)^{(k-1)T}
\E|\theta_0|^{4} \\+ 2  \rme^{-a} \left\{1+ a\lambda  M(2,d) \sum_{j=0}^{(k-1)T-1} \left(1-a\lambda\right)^j|x_{(k-1)T -1 - j}|^{4}
+   \widehat{M}(2,d)  \right\} + 3 \mathrm{v}_4(\overline{M}(4)),
\end{multline}
where the constants $M(2,d)$ and $\widehat{M}(2,d)$ are given by \eqref{eq:def:M_and_Mhat} and \eqref{eq:def:M_tilde} with $p=2$.
\end{corollary}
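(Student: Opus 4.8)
The plan is to \emph{compose} two estimates already established: the Lyapunov contraction of the frozen-data diffusion $\tzeta{\lambda}{\cdot}{(k-1)T}{\mathbf{x}}{\cdot}$ over the single window $[(k-1)T,kT]$ (Lemma~\ref{lem:aux_proc_conts_V_p}) together with the fourth-moment bound for the Euler--Maruyama iterate $Y^{\lambda}_{(k-1)T}(\mathbf{x})$ (Lemma~\ref{lem:moment_SGLD_2p}). Concretely, I would first apply Lemma~\ref{lem:aux_proc_conts_V_p} with $p=4$, $s=(k-1)T$, $t=kT$ and the \emph{random} initial datum $\tilde\theta=Y^{\lambda}_{(k-1)T}(\mathbf{x})$. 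This is legitimate because $Y^{\lambda}_{(k-1)T}(\mathbf{x})$ is a measurable function of $\theta_0$ and of $\tilde B^{\lambda}$ restricted to $[0,(k-1)T]$, hence $\sigma(\theta_0)\vee\mathcal{F}^{\lambda}_{(k-1)T}$-measurable and independent of the increments $\tilde B^{\lambda}_u-\tilde B^{\lambda}_{(k-1)T}$, $u\ge(k-1)T$, that drive \eqref{eq:aux_proc_conts}, so the It\^o/Gronwall argument in the proof of Lemma~\ref{lem:aux_proc_conts_V_p} applies verbatim. Integrability is also fine: $Y^{\lambda}_{(k-1)T}(\mathbf{x})\in L^{6}=L^{2p-2}$ by \Cref{imit} and Lemma~\ref{lem:moment_SGLD_2p} (with $p=3$). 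The output is
\[
\E[V_4(\tzeta{\lambda}{kT}{(k-1)T}{\mathbf{x}}{Y^{\lambda}_{(k-1)T}(\mathbf{x})})]\le\rme^{-\lambda C_6(4)T}\,\E[V_4(Y^{\lambda}_{(k-1)T}(\mathbf{x}))]+3\,\mathrm{v}_4(\overline{M}(4)).
\]

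The two ingredients on the right must then be simplified. Since $C_6(p)=ap/4$ by Lemma~\ref{lyapp}, we have $C_6(4)=a$, and since $T$ is chosen with $\lambda T\ge1$ the prefactor satisfies $\rme^{-\lambda C_6(4)T}=\rme^{-a\lambda T}\le\rme^{-a}$. For $\E[V_4(Y^{\lambda}_{(k-1)T}(\mathbf{x}))]$ I would use the elementary bound $V_4(\theta)=(1+|\theta|^2)^2\le2(1+|\theta|^4)$, which reduces matters to controlling $\E[|Y^{\lambda}_{(k-1)T}(\mathbf{x})|^4]$; this is exactly what Lemma~\ref{lem:moment_SGLD_2p} provides with $p=2$ evaluated at the integer time $(k-1)T$, i.e.\ with $n=(k-1)T-1$ and $t=n+1$ (the case $k=1$ handled directly since $Y^{\lambda}_0(\mathbf{x})=\theta_0$). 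Substituting $t-n=1$ in \eqref{eq:moment_SGLD_2p} collapses the two geometric sums there into the single sum $\lambda a M(2,d)\sum_{j=0}^{(k-1)T-1}(1-a\lambda)^{j}|x_{(k-1)T-1-j}|^{4}$, and leaves $(1-a\lambda)^{(k-1)T}\E|\theta_0|^{4}+\widehat{M}(2,d)$ for the remaining terms, with $1-a\lambda\ge0$ since $\lambda\le\lambda_{\max}\le1/a$.

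It then remains to assemble the pieces: multiplying $\E[V_4(Y^{\lambda}_{(k-1)T}(\mathbf{x}))]\le2\bigl(1+\E|Y^{\lambda}_{(k-1)T}(\mathbf{x})|^{4}\bigr)$ by $\rme^{-a}$, inserting the $p=2$ moment bound, and adding $3\,\mathrm{v}_4(\overline{M}(4))$ reproduces the right-hand side of \eqref{fourth_mom_aux_proc_and_Y} after collecting separately the $\E|\theta_0|^4$-term, the data-dependent sum, and the remaining constants. I do not expect a genuine obstacle here: the corollary is essentially a bookkeeping composition of the two preceding lemmas. The two points that do require care are (i) the measurability/independence verification letting the random vector $Y^{\lambda}_{(k-1)T}(\mathbf{x})$ serve as an initial condition in Lemma~\ref{lem:aux_proc_conts_V_p}, and (ii) the constant arithmetic --- turning $\rme^{-\lambda C_6(4)T}$ into $\rme^{-a}$ via $\lambda T\ge1$ and aligning the sum produced by Lemma~\ref{lem:moment_SGLD_2p} at $t=n+1$ with the one displayed in \eqref{fourth_mom_aux_proc_and_Y}.
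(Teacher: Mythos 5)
Your route is exactly the paper's: the official proof is a one-line composition of Lemma~\ref{lem:aux_proc_conts_V_p} (applied with $p=4$, $s=(k-1)T$, $t=kT$ and initial datum $Y^{\lambda}_{(k-1)T}(\mathbf{x})$), the moment bound \eqref{eq:moment_SGLD_2p} with $p=2$ evaluated at the integer time $(k-1)T$, and the identity $C_6(4)=a$. Your measurability/integrability checks for plugging a random initial condition into Lemma~\ref{lem:aux_proc_conts_V_p}, the bound $V_4(\theta)\le 2(1+|\theta|^4)$, and the collapsing of the two geometric sums at $t=n+1$ are all correct and constitute precisely the bookkeeping the paper leaves implicit.

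The one step that fails as written is your treatment of the exponential prefactor. You assert $\lambda T\ge 1$ and deduce $\rme^{-\lambda C_6(4)T}=\rme^{-a\lambda T}\le\rme^{-a}$. But $T=\lfloor 1/\lambda\rfloor$ by \eqref{eq:definition:T}, so $\lambda T\le 1$ and the inequality runs the other way: $\rme^{-a\lambda T}\ge\rme^{-a}$. The only elementary bound available is $\rme^{-a\lambda T}\le 1$, so the literal constant $2\rme^{-a}$ in \eqref{fourth_mom_aux_proc_and_Y} is not reachable by this argument; the factor should be $2$ rather than $2\rme^{-a}$. This defect is already present in the paper's statement (and is silently repeated in its downstream use at \eqref{eq:bound-moment-4-2}), and the repair is harmless --- replacing $\rme^{-a}$ by $1$ only inflates absolute constants and affects nothing in Theorem~\ref{main}. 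So: right approach and right mechanics, but the justification you gave for the factor $\rme^{-a}$ is false, and that factor is an artifact of the statement rather than something you could have derived.
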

\begin{proof}
A direct consequence of Lemma~\ref{lem:aux_proc_conts_V_p}, 
\eqref{eq:moment_SGLD_2p} and the fact that $C_6(4) = a$.
\end{proof}

We now define a continuous-time filtration $(\mathcal{H}_t)_{t \geq 0}$ that encapsulates
the information flow of $X_{n}$, $n\in\mathbb{N}$ as well as all the ``auxiliary'' randomness
of the Brownian motion ${B}_{t}$, $t\in\mathbb{R}_{+}$. We also introduce the corresponding decreasing family
of $\sigma$-algebras $(\mathcal{H}_t^+)_{t \geq 0}$.
\begin{equation}
\label{eq:definition-sigma-field-H}
\mathcal{H}_t \eqdef \mathcal{F}_{\infty}\vee \mathcal{G}_{\lfloor t\rfloor} \quad \text{and}
\quad \mathcal{H}_t^{+}:=\mathcal{G}^+_{\lfloor t\rfloor}, \quad t\in\rset_+ \,
\end{equation}
where $(\mathcal{G}_n,\mathcal{G}^+_n)_{n \in \nset}$ are as in Assumption
\ref{assum:lmiu}.

We introduce another auxiliary process that play a prominent r\^{o}le in the sequel.
Let $\tZ{\lambda}{t}{s}{\vartheta}$, $t\geq s$
denote the solution of the SDE
\begin{equation}\label{averrage}
\rmd \tZ{\lambda}{t}{s}{\vartheta}=-\lambda h(\tZ{\lambda}{t}{s}{\vartheta})\, \rmd t+\{2\lambda \beta^{-1}\}^{1/2}
\rmd \tilde{B}^{\lambda}_t,
\end{equation}
with initial condition $\tZ{\lambda}{s}{s}{\vartheta}:=\vartheta$ for some $\mathcal{H}_s^{\lambda}$-measurable
random variable $\vartheta$. Note that $L^{\lambda}_t= \tZ{\lambda}{t}{0}{\theta_0}$.
At this point, we introduce
\begin{equation}
\label{eq:definition:T}
T:=\lfloor 1/{\lambda}\rfloor \,,
\end{equation}
which is used for the creation of a suitable set of grid points.
Fix $n \in \nset$ and define
for any $t\in\coint{nT, \infty}$
\begin{equation}
\label{eq:definition-overline-Z}
\bZ{\lambda}{nT}{t}= \tZ{\lambda}{t}{nT}{\tilde{Y}_{nT}^{\lambda}(\mathbf{X})} \,.
\end{equation}
Note that $\bZ{\lambda}{nT}{t}$ is $\mathcal{H}_{nT}$-measurable for all $t \ge nT$.

\begin{lemma}\label{lem:z_moment_bounds}
Assume \Cref{imit}, \Cref{assum:lip} and \Cref{assum:dissipativity}. For any integers $p\geq 2$, $n \in \nset$, $\lambda > 0$ and $t \geq nT$,
\begin{equation}\label{z_moment_p1}
\E[V_{p}(\bZ{\lambda}{nT}{t})] \leq \rme^{-\lambda C_6(p) t} \E[V_{p}(\theta_0)]+ 6\mathrm{v}_p(\overline{M}(p)),
\end{equation}
where  $\overline{M}(p)$ and $V_p$ are defined in \eqref{eq:definition-barM-p} and \eqref{eq:definition-Vp}.
\end{lemma}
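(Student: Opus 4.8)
The plan is to apply the moment estimate for the averaged process $\tZ{\lambda}{t}{s}{\vartheta}$ started from a suitable random initial condition, and then substitute the bound on $\E[V_p(\tilde{Y}^{\lambda}_{nT}(\mathbf{X}))]$ obtained in Corollary~\ref{cor:SDE_SGLDsmoments}. First I would record that $\tZ{\lambda}{t}{s}{\vartheta}$ is nothing but a time-changed version of the Langevin SDE \eqref{averrage}, which has the same structure as \eqref{eq:aux_proc_conts} but with the \emph{exact} mean field $h$ in place of $H(\cdot,x_{\lfloor t\rfloor})$; since \eqref{eq:laban} gives the dissipativity inequality $\ps{h(\theta)}{\theta}\ge a|\theta|^2-b$ uniformly, the drift estimate \eqref{eq:genlib} of Lemma~\ref{lyapp} applies verbatim to the generator of \eqref{averrage}. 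Hence, repeating the It\^o's formula computation of Lemma~\ref{lem:aux_proc_conts_V_p} (now with the generator $\tgenerator$ instead of $\generator$), one obtains, for any $\mathcal{H}_s^{\lambda}$-measurable initial condition $\vartheta \in L^{2p-2}$ and $t\ge s$,
\begin{equation*}
\E[V_p(\tZ{\lambda}{t}{s}{\vartheta})] \le \rme^{-\lambda C_6(p)(t-s)}\E[V_p(\vartheta)] + 3\mathrm{v}_p(\overline{M}(p)).
\end{equation*}

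Next I would take $s = nT$ and $\vartheta = \tilde{Y}^{\lambda}_{nT}(\mathbf{X})$, which is $\mathcal{H}_{nT}$-measurable by construction and lies in $L^{2p-2}$ by the moment bounds already established (e.g.\ Corollary~\ref{cor:SDE_SGLDsmoments} applied with $2p-2$ in place of $p$, together with Assumption~\ref{imit}). This gives
\begin{equation*}
\E[V_p(\bZ{\lambda}{nT}{t})] \le \rme^{-\lambda C_6(p)(t-nT)}\E[V_p(\tilde{Y}^{\lambda}_{nT}(\mathbf{X}))] + 3\mathrm{v}_p(\overline{M}(p)).
\end{equation*}
Now I would insert the bound from Corollary~\ref{cor:SDE_SGLDsmoments}, namely $\E[V_p(\tilde{Y}^{\lambda}_{nT}(\mathbf{X}))] \le \rme^{-\lambda C_6(p) nT}\E[V_p(\theta_0)] + 3\mathrm{v}_p(\overline{M}(p))$. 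Multiplying the first term of this bound by $\rme^{-\lambda C_6(p)(t-nT)}\le 1$ and the second term by $\rme^{-\lambda C_6(p)(t-nT)} \le 1$ as well, the exponential factors combine as $\rme^{-\lambda C_6(p)(t-nT)}\rme^{-\lambda C_6(p)nT} = \rme^{-\lambda C_6(p)t}$, and the two constant contributions $3\mathrm{v}_p(\overline{M}(p)) + 3\mathrm{v}_p(\overline{M}(p)) = 6\mathrm{v}_p(\overline{M}(p))$ give exactly \eqref{z_moment_p1}.

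The only point requiring a little care — and the main obstacle, such as it is — is justifying that the It\^o computation goes through, i.e.\ that the local martingale term $\int \ps{\nabla V_p(\tZ{\lambda}{u}{s}{\vartheta})}{\rmd\tilde{B}^{\lambda}_u}$ is a true martingale so that its expectation vanishes; this follows from $\sup_{u\in[s,t]}\E[|\nabla V_p(\tZ{\lambda}{u}{s}{\vartheta})|^2]<\infty$, which in turn follows once $\vartheta\in L^{2p-2}$, exactly as in the proof of Lemma~\ref{lem:aux_proc_conts_V_p}. One should also note that the Lyapunov bound must be applied conditionally on $\mathcal{H}_{nT}$ (since $\vartheta$ is random and $\mathcal{H}_{nT}$-measurable) and then take expectations, using that $\tilde{B}^{\lambda}$ has independent increments after time $nT$ relative to the filtration; the dissipativity being uniform, none of the constants depend on $\mathbf{X}$, so taking expectations over $\mathbf{X}$ is immediate. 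Everything else is bookkeeping with the exponential factors.
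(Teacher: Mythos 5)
Your proposal is correct and follows essentially the same route as the paper: apply the Lyapunov/It\^o argument of Lemma~\ref{lem:aux_proc_conts_V_p} to the averaged dynamics \eqref{averrage} using \eqref{eq:genlib}, start from $\vartheta=\tilde{Y}^{\lambda}_{nT}(\mathbf{X})$, and then insert the bound of Corollary~\ref{cor:SDE_SGLDsmoments}, combining the exponential factors and the two constants $3\mathrm{v}_p(\overline{M}(p))$ into $6\mathrm{v}_p(\overline{M}(p))$. The extra care you take with the martingale term and the $\mathcal{H}_{nT}$-measurability is consistent with what the paper leaves implicit.
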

\begin{proof}
By taking into consideration \eqref{eq:genlib} and by arguing as in Lemma~\ref{lem:aux_proc_conts_V_p}, one obtains $\E[V_{p}(\bZ{\lambda}{nT}{t})] \leq  \rme^{-\lambda C_6(p)(t-nT)} \E[V_{p}(\tilde{Y}_{nT}^{\lambda}(\mathbf{X}))]+ 3\mathrm{v}_p(\overline{M}(p))$. Hence, the desired result follows from Corollary~\ref{cor:SDE_SGLDsmoments}.
\end{proof}

Control of the supremum process of $\bZ{\lambda}{nT}{t}$ is an essential ingrediant in the proof of Lemma \ref{vizier} below.

\begin{corollary} \label{cor:moment_sup_process}
Assume \Cref{imit}, \Cref{assum:lip} and \Cref{assum:dissipativity}. For any integer $p\geq 2$,
\begin{equation}\label{eq:moment_sup_process}
 \E[\sup\nolimits_{nT \le t\le (n+1) T}V_{p}(\bZ{\lambda}{nT}{t})] \le 3 \rme^{-\lambda C_6(p)nT} \E[V_p(\theta_0)]+ C_{12}(p),
\end{equation}
where $T$ and $\overline{M}(2p)$  are given in \eqref{eq:definition:T} and \eqref{eq:definition-barM-p} respectively, and
\begin{equation}
\label{eq:definition-C12}
C_{12}(p) \eqdef 9 (1 +  (3ap)^{1/2}/2) \operatorname{v}_{p}(\overline{M}(2p)) \,.
\end{equation}
\end{corollary}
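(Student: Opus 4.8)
The plan is to run It\^o's formula for $t\mapsto V_p(\bZ{\lambda}{nT}{t})$ on the interval $\ccint{nT,(n+1)T}$, to separate the drift part (controlled by the Lyapunov estimate \eqref{eq:genlib}) from the martingale part, to pass to the supremum over the interval and then to the expectation, and finally to bound the martingale supremum by a maximal inequality combined with the moment bound of Lemma~\ref{lem:z_moment_bounds}.

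First I recall that $\bZ{\lambda}{nT}{t}=\tZ{\lambda}{t}{nT}{\tilde{Y}_{nT}^{\lambda}(\mathbf{X})}$ solves \eqref{averrage}, so It\^o's formula ($V_p\in C^2$, cf.\ \eqref{solymos}) gives, for $nT\le t\le(n+1)T$,
\begin{equation*}
V_p(\bZ{\lambda}{nT}{t}) = V_p(\tilde{Y}_{nT}^{\lambda}(\mathbf{X})) + \int_{nT}^{t}\tgenerator[\lambda][\beta][p][\bZ{\lambda}{nT}{s}]\,\rmd s + M_t, \qquad M_t:=\{2\lambda\beta^{-1}\}^{1/2}\!\int_{nT}^{t}\!\ps{\nabla V_p(\bZ{\lambda}{nT}{s})}{\rmd \tilde{B}^{\lambda}_s},
\end{equation*}
with $(M_t)_{t\ge nT}$ a martingale started at $M_{nT}=0$; the required integrability follows from Lemma~\ref{lem:z_moment_bounds} and $|\nabla V_p(\theta)|^{2}\le p^{2}(1+|\theta|^{2})^{p-1}\le p^{2}V_{2p}(\theta)$. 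By \eqref{eq:genlib}, $\tgenerator[\lambda][\beta][p][\cdot]=\lambda\,\tgenerator[1][\beta][p][\cdot]\le-\lambda C_6(p)V_p(\cdot)+\lambda C_7(p)\le\lambda C_7(p)$, so the drift integral over $\ccint{nT,t}$ is at most $\lambda C_7(p)T\le C_7(p)$ because $\lambda T\le1$ by \eqref{eq:definition:T}. Taking $\sup_{nT\le t\le(n+1)T}$ and then $\E$ gives
\begin{equation*}
\E\bigl[\sup\nolimits_{nT\le t\le(n+1)T}V_p(\bZ{\lambda}{nT}{t})\bigr]\le\E[V_p(\tilde{Y}_{nT}^{\lambda}(\mathbf{X}))]+C_7(p)+\E\bigl[\sup\nolimits_{nT\le t\le(n+1)T}|M_t|\bigr],
\end{equation*}
where by Corollary~\ref{cor:SDE_SGLDsmoments} the first term is at most $\rme^{-\lambda C_6(p)nT}\E[V_p(\theta_0)]+3\operatorname{v}_p(\overline{M}(p))$, and $C_7(p)=\tfrac34 ap\,\operatorname{v}_p(\overline{M}(p))$ by Lemma~\ref{lyapp}.

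The main work is the martingale term. A maximal inequality (Doob's $L^{2}$-inequality, giving $\E[\sup_{nT\le t\le(n+1)T}|M_t|]\le2(\E[\langle M\rangle_{(n+1)T}])^{1/2}$, or Burkholder--Davis--Gundy) reduces matters to the quadratic variation $\langle M\rangle_{(n+1)T}=2\lambda\beta^{-1}\int_{nT}^{(n+1)T}|\nabla V_p(\bZ{\lambda}{nT}{s})|^{2}\,\rmd s$. Using $|\nabla V_p|^{2}\le p^{2}V_{2p}$, Fubini, Lemma~\ref{lem:z_moment_bounds} applied at the even integer index $2p$ (recall $p\ge2$), and $\lambda T\le1$, one gets a bound of the form $\E[\langle M\rangle_{(n+1)T}]\lesssim\beta^{-1}p^{2}(\rme^{-\lambda C_6(2p)nT}\E[V_{2p}(\theta_0)]+\operatorname{v}_{2p}(\overline{M}(2p)))$. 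Taking the square root and using $(x+y)^{1/2}\le x^{1/2}+y^{1/2}$, $C_6(2p)=2C_6(p)$, and $\operatorname{v}_{2p}(\overline{M}(2p))^{1/2}=\operatorname{v}_p(\overline{M}(2p))$ (so the exponential factor reappears as $\rme^{-\lambda C_6(p)nT}$ and the noise level as $\operatorname{v}_p(\overline{M}(2p))$), then absorbing the $\beta^{-1}$ prefactor through the lower bound $\overline{M}(2p)^{2}\ge 4d/(3a\beta)$ (which gives $\beta^{-1}\lesssim a\,\operatorname{v}_p(\overline{M}(2p))$ since $d\ge1$), the martingale contribution takes the shape $2\rme^{-\lambda C_6(p)nT}\E[V_p(\theta_0)]+(\text{const})(1+(3ap)^{1/2}/2)\operatorname{v}_p(\overline{M}(2p))$; the combination $(3ap)^{1/2}/2=\{C_7(p)/\operatorname{v}_p(\overline{M}(p))\}^{1/2}$ is exactly the one appearing in $C_{12}(p)$.

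Summing the three contributions gives $\E[\sup_{nT\le t\le(n+1)T}V_p(\bZ{\lambda}{nT}{t})]\le3\rme^{-\lambda C_6(p)nT}\E[V_p(\theta_0)]+C_{12}(p)$ with $C_{12}(p)$ as in \eqref{eq:definition-C12}, the coefficient $3$ arising as $1$ (from the $\tilde{Y}_{nT}^{\lambda}$-term) plus $2$ (from the martingale term). The only genuinely delicate point, I expect, is this martingale estimate: one must carefully track the universal constant of the maximal inequality, the factor $p^{2}$ from bounding $|\nabla V_p|^{2}$, the $\beta^{-1}$ prefactor, the index-doubling in Lemma~\ref{lem:z_moment_bounds}, and $\lambda T\le1$, so that all constants collapse exactly into the compact closed form \eqref{eq:definition-C12}.
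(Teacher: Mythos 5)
Your overall architecture (It\^o's formula, drift bounded via Lemma~\ref{lyapp}, then a maximal inequality for the stochastic integral) is a natural route, but the martingale step as you describe it cannot produce the bound as stated, and this is precisely the step you yourself flag as delicate. Two things go wrong. First, Doob's $L^{2}$-inequality (or BDG with the square root pulled outside the expectation) leaves you with $\bigl(\E[\langle M\rangle_{(n+1)T}]\bigr)^{1/2}$, and after inserting Lemma~\ref{lem:z_moment_bounds} at index $2p$ the transient term becomes a multiple of $\rme^{-\lambda C_6(p)nT}\bigl(\E[V_{2p}(\theta_0)]\bigr)^{1/2}$. But Jensen gives $\bigl(\E[V_{2p}(\theta_0)]\bigr)^{1/2}\ge \E[V_p(\theta_0)]$, i.e.\ the inequality you need points the wrong way: you end up with a strictly higher moment of $\theta_0$ than the $\E[V_p(\theta_0)]$ in \eqref{eq:moment_sup_process}. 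Second, the prefactor on that transient term is of order $p\,\beta^{-1/2}$ (from $|\nabla V_p|^2\le p^2V_{2p}$ and the $2\lambda\beta^{-1}$ in the quadratic variation), not an absolute constant $\le 2$; it cannot be absorbed into the coefficient $3$, and your device of trading $\beta^{-1}$ for $a\operatorname{v}_p(\overline{M}(2p))$ only helps in the constant term, not in the term multiplying the initial condition. So the constants do not ``collapse'' into \eqref{eq:definition-C12}; the functional form itself comes out different. (A version of the corollary with $(\E[V_{2p}(\theta_0)])^{1/2}$ and worse constants would still be usable downstream in Lemma~\ref{vizier} under \Cref{imit}, but it is not the statement being proved.)

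The paper avoids estimating the martingale altogether. It first shows that for every bounded stopping time $\tau_n\ge nT$ one has $\CPE{V_{q}(\bZ{\lambda}{nT}{\tau_n})}{\mathcal{H}_{nT}}\le V_{q}(\tilde{Y}^{\lambda}_{nT}(\mathbf{X}))+\lambda C_7(q)\,\CPE{\tau_n-nT}{\mathcal{H}_{nT}}$ (your drift bound, evaluated at stopping times), and then invokes Lenglart's domination inequality \cite[Chapter~IV, Proposition~4.7]{revuz:yor:1999}: for $k\in(0,1)$, $\E[(\sup_t V_q(\bZ{\lambda}{nT}{t}))^{k}]\le \frac{2-k}{1-k}\E[A_{(n+1)T}^{k}]$ with $A_t=V_q(\tilde{Y}^{\lambda}_{nT}(\mathbf{X}))+\lambda C_7(q)(t-nT)$. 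Choosing $q=2p$ and $k=1/2$ gives the factor $\frac{2-k}{1-k}=3$, and --- this is the key point --- the square root now sits \emph{inside} the expectation, so $\E[(V_{2p}(\tilde{Y}^{\lambda}_{nT}(\mathbf{X})))^{1/2}]=\E[V_p(\tilde{Y}^{\lambda}_{nT}(\mathbf{X}))]$ exactly, which Corollary~\ref{cor:SDE_SGLDsmoments} bounds by $\rme^{-\lambda C_6(p)nT}\E[V_p(\theta_0)]+3\operatorname{v}_p(\overline{M}(p))$; the remaining piece $(\lambda T\,C_7(2p))^{1/2}\le \operatorname{v}_p(\overline{M}(2p))\,(3ap/2)^{1/2}$ supplies the $(3ap)^{1/2}$ part of $C_{12}(p)$. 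If you want to salvage your route you must replace Doob/BDG-in-$L^2$ by an argument that keeps the concave power inside the expectation --- which is exactly what Lenglart's inequality is for.
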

\begin{proof}
For any $n\in \nset$, $q\ge2$ and any bounded stopping time $\tau_{{\scriptscriptstyle n}} \ge nT$ (a.s.), arguing as in Lemma~\ref{lyapp} results in
\begin{align*}
\CPE{V_q(\bZ{\lambda}{nT}{\tau_{{\scriptscriptstyle n}}})}{\mathcal{H}_{nT}} &\le V_q(\tilde{Y}_{n T}^{\lambda}(\mathbf{X})) + \CPE{\int\nolimits_{nT}^{\tau_{{\scriptscriptstyle n}}}\left(-\lambda C_6(q) V_q(\bZ{\lambda}{nT}{s}) +\lambda C_7(q)\right)\, \rmd s}{\mathcal{H}_{nT}} \\
& \le V_q(\tilde{Y}_{n T}^{\lambda}(\mathbf{X})) + \lambda C_7(q)\CPE{(\tau_{{\scriptscriptstyle n}}-nT)}{\mathcal{H}_{nT}} \, .
\end{align*}
Then, according to Lenglart's domination inequality, see \cite[Chapter~IV, Proposition~4.7]{revuz:yor:1999}, with dominating process
\[
A_t := V_{q}\left(\tilde{Y}_{n T}^{\lambda}(\mathbf{X})\right)+\lambda C_{7}(q) (t-n T), \mbox{ for any } t\ge nT,
\]
one obtains, for any $k\in(0,1)$,
\[
\E\bigg[\left(\sup\nolimits_{nT \le t\le (n+1) T}V_{q}(\bZ{\lambda}{nT}{t})\right)^k\bigg]
\leq \frac{2-k}{1-k} \E[A_{(n+1)T}^k]
\]
Thus, using $(a + b)^k \leq a^k + b^k$ for any $a,b \geq 0$ and $k \in \ooint{0,1}$, we get
\[
\E\bigg[\left(\sup\nolimits_{nT \le t\le (n+1) T}V_{q}(\bZ{\lambda}{nT}{t})\right)^k\bigg] \le \frac{2-k}{1-k} \left\{ \E\bigg[ \left( V_q(\tilde{Y}_{n T}^{\lambda}(\mathbf{X})) \right)^k\bigg] +  C_7^k(q)  (\lambda T)^k \right\} \,.
\]
Consequently, for $k=1/2$ and $q=2p$ and in view of Corollary \ref{cor:SDE_SGLDsmoments}, the desired result holds.
\end{proof}

\subsection{Contraction estimates}

A crucial contraction property is formulated in the next theorem, based on the deep results of \cite{eberle:guillin:zimmer:Trans:2019}.

\begin{proposition}
\label{prop:contra} Let $(L_t')_{t\in\rset_+}$  be the solution of \eqref{kako} with initial condition $L_0'=\theta_0'$ which is independent of $\mathcal{F}_{\infty}$
and satisfies $\theta_0' \in L^2$. Then,
\begin{equation}\label{karako}
w_{1,2}(\mathcal{L}(L_t),\mathcal{L}(L_t'))\leq C_9 \rme^{-C_8 t}
w_{1,2}(\mathcal{L}(\theta_0),\mathcal{L}(\theta_0')),\ t\in\rset_+,
\end{equation}
where the constants $C_8$ and $C_9$ are given explicitly in Lemma \ref{contractionconst} and $w_{1,2}$ is defined in \eqref{eq:definition-w-1-p}. Fix a positive integer $m$. Suppose, for any $t>m$, $\tzeta{\lambda}{t}{m}{\mathbf{x}}{\tilde{\theta}}$  and $\tzeta{\lambda}{t}{m}{\mathbf{x}}{\tilde{\theta}'}$ are the solutions of \eqref{eq:aux_proc_conts} with initial conditions $\tilde{\theta}$, $\tilde{\theta}' \in L^2$, which are independent of $\mathcal{F}_{\infty}$. Then, for any $t>m$, we get
\begin{equation}\label{eq:contra_w_1_2}
w_{1,2}(\mathcal{L}(\tzeta{\lambda}{t}{m}{\mathbf{x}}{\tilde{\theta}}), \mathcal{L}(\tzeta{\lambda}{t}{m}{\mathbf{x}}{\tilde{\theta}'})) \le C_9  \rme^{-C_8\lambda(t-m)}   w_{1,2}(\mathcal{L}(\tilde{\theta}), \mathcal{L}(\tilde{\theta}')) \, .
\end{equation}
\end{proposition}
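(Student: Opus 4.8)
The plan is to reduce Proposition~\ref{prop:contra} to a direct application of the contraction theorem of \cite{eberle:guillin:zimmer:Trans:2019} for the Langevin SDE \eqref{kako}. First I would verify that the hypotheses of \cite{eberle:guillin:zimmer:Trans:2019} are met by the drift $h=\nabla U$: Lipschitz-continuity holds by \eqref{mulyan}, and the ``dissipativity at infinity'' condition (monotonicity of $h$ outside a ball, or the one-sided Lipschitz bound $\ps{h(\theta)-h(\theta')}{\theta-\theta'}\geq a|\theta-\theta'|^{2}-\text{const}$) follows from \eqref{eq:laban} together with the Lipschitz bound \eqref{mulyan} by a standard splitting argument. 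The theorem of \cite{eberle:guillin:zimmer:Trans:2019} then supplies a reflection-type coupling of two copies of \eqref{kako} together with a concave function and a weight function whose product is, up to constants, comparable to the integrand $\{1\wedge|\theta-\theta'|\}\{1+V_{2}(\theta)+V_{2}(\theta')\}$ appearing in $w_{1,2}$; this yields the exponential contraction \eqref{karako} with explicit constants $C_{8}$, $C_{9}$, which I would record and defer to Lemma~\ref{contractionconst} for the precise values (the computation of these constants from the $a$, $b$, $K_{1}$, $\beta$, $d$ data being routine but tedious). The independence of $L_{0}'=\theta_{0}'$ from $\mathcal{F}_{\infty}$ is exactly what allows the coupling to be realised on a common probability space driven by an enlarged Brownian filtration.

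For the second claim \eqref{eq:contra_w_1_2}, the key observation is that the process $\tzeta{\lambda}{t}{m}{\mathbf{x}}{\cdot}$ is \emph{not} a time-change of \eqref{kako} because its drift $-\lambda H(\cdot,x_{\lfloor t\rfloor})$ depends on $t$ through the frozen data $\mathbf{x}$. So I cannot quote \eqref{karako} verbatim. However, the contraction argument of \cite{eberle:guillin:zimmer:Trans:2019} is robust to this: what matters for the reflection coupling of $\tzeta{\lambda}{t}{m}{\mathbf{x}}{\tilde{\theta}}$ and $\tzeta{\lambda}{t}{m}{\mathbf{x}}{\tilde{\theta}'}$ is a uniform-in-$x$ one-sided Lipschitz/dissipativity estimate on the map $\theta\mapsto H(\theta,x)$. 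This is supplied by \Cref{assum:dissipativity}, which gives \eqref{principal} \emph{uniformly in $x$}, combined with \Cref{assum:lip}, which gives $|H(\theta,x)-H(\theta',x)|\leq K_{1}|\theta-\theta'|$ (the $x$-dependent term cancels since both copies see the same $x_{\lfloor t\rfloor}$). Hence the same coupling construction and the same Lyapunov/concave-function machinery apply to the time-inhomogeneous pair, producing the same contraction rate; the factor $\lambda$ in the exponent $\rme^{-C_{8}\lambda(t-m)}$ is simply the time-change factor, exactly as in passing from \eqref{kako} to \eqref{eq:kell}, since the drift and the diffusion coefficient in \eqref{eq:aux_proc_conts} are both scaled by $\lambda$ relative to the ``unit-speed'' dynamics; equivalently, $\tzeta{\lambda}{t}{m}{\mathbf{x}}{\cdot}$ run for time $t-m$ is, after the substitution $u=\lambda(t-m)$, governed by dynamics whose dissipativity constants are $\lambda$-free.

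Concretely, the steps in order are: (i) record the verification of the \cite{eberle:guillin:zimmer:Trans:2019} hypotheses for $h$ from \eqref{mulyan} and \eqref{eq:laban}; (ii) invoke their main contraction result to get \eqref{karako}, pushing the explicit constant computation into Lemma~\ref{contractionconst}; (iii) observe that for fixed $\mathbf{x}$ and a fixed time $t$, the drift coefficient $\theta\mapsto\lambda H(\theta,x_{\lfloor t\rfloor})$ satisfies, uniformly over $x_{\lfloor t\rfloor}$, the Lipschitz bound with constant $\lambda K_{1}$ and the dissipativity bound $\ps{\lambda H(\theta,x)}{\theta}\geq \lambda a|\theta|^{2}-\lambda b$; (iv) run the same reflection coupling for the pair $(\tzeta{\lambda}{t}{m}{\mathbf{x}}{\tilde{\theta}},\tzeta{\lambda}{t}{m}{\mathbf{x}}{\tilde{\theta}'})$ on a common space (using independence of $\tilde{\theta},\tilde{\theta}'$ from $\mathcal{F}_{\infty}$) and read off the contraction $\rme^{-C_{8}\lambda(t-m)}$ with the \emph{same} $C_{8}$, $C_{9}$ after absorbing the uniform-in-$x$ bounds into the constants. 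The main obstacle I anticipate is purely bookkeeping: checking that the semimetric $w_{1,2}$ from \eqref{eq:definition-w-1-p} — which is built from the specific Lyapunov function $V_{2}(\theta)=(1+|\theta|^{2})$ and the truncated distance $1\wedge|\cdot|$ — is dominated by (and dominates, up to a constant) the metric $\rho$ produced by \cite{eberle:guillin:zimmer:Trans:2019}, so that the contraction in their metric transfers to $w_{1,2}$ with the constant $C_{9}$ absorbing the equivalence ratio. This, and the derivation of explicit $C_{8}$, $C_{9}$ in Lemma~\ref{contractionconst}, is where all the real work sits; everything else is a faithful re-run of a known coupling argument with $x$-uniform constants.
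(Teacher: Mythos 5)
Your treatment of \eqref{karako} is essentially the paper's: verify the hypotheses of \cite{eberle:guillin:zimmer:Trans:2019} (Lipschitz drift from \eqref{mulyan}, the Lyapunov drift condition from Lemma~\ref{lyapp} with $V=V_2$), invoke their contraction corollary in their semimetric $\mathcal{W}_{\rho_2}$, and transfer to $w_{1,2}$ via the two-sided comparison \eqref{lajja}, with the equivalence ratio $C_{11}/C_{10}$ becoming $C_9$. That part is fine.

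For \eqref{eq:contra_w_1_2} you correctly identify the obstruction (the drift of \eqref{eq:aux_proc_conts} is time-inhomogeneous through $x_{\lfloor t\rfloor}$), but your resolution --- asserting that the reflection-coupling machinery of \cite{eberle:guillin:zimmer:Trans:2019} ``is robust to this'' and applies directly to the time-dependent drift --- is not what the cited result states, and as written it is an unverified claim rather than a proof step. The paper avoids having to extend their theorem at all: on each interval $[k,k+1)$ the frozen datum $x_k$ is constant, so the drift is autonomous there and the cited corollary applies verbatim with constants uniform in $x$ (by \Cref{assum:lip} and \Cref{assum:dissipativity}); one then composes the flow over the successive unit intervals from $m$ to $t$ and multiplies the one-step factors $\rme^{-C_8\lambda\cdot}$, as in \eqref{eq:1_step_contra}. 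A point worth internalising here is that this chaining \emph{must} be carried out in $\mathcal{W}_{\rho_2}$ itself, where the contraction holds with no multiplicative constant, and the passage to $w_{1,2}$ is made only once at the two ends; if you chained the $w_{1,2}$-inequalities instead you would accumulate a factor $C_9^{\,t-m}$ and lose the estimate. So your proposal either needs this interval-by-interval composition argument inserted, or a genuine re-derivation of the coupling theorem for piecewise-autonomous drifts; the former is a few lines, the latter is real work that the citation does not cover.
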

\begin{proof}
We first treat $L_t$, $L_t'$.
\cite[Assumption~2.1]{eberle:guillin:zimmer:Trans:2019} holds with $\kappa$ constant (and equal to $K_1$) due to \Cref{assum:lip}. \cite[Assumption~2.5]{eberle:guillin:zimmer:Trans:2019} holds due to \eqref{solymos}.  \cite[Assumption~2.2]{eberle:guillin:zimmer:Trans:2019} holds with $V=V_2$ due to Lemma~\ref{lyapp} (note that in that paper the diffusion coefficient is assumed to be $1$ while in our case it is $\sqrt{2/\beta}$
but this does not affect the validity of the arguments, only the values of the constants).
Thus, in view of  \cite[Corollary~2.3]{eberle:guillin:zimmer:Trans:2019},
\[
\mathcal{W}_{\rho_2}
(\mathcal{L}(L_t),\mathcal{L}(L_t'))\leq \rme^{-C_8 t}
\mathcal{W}_{\rho_2}(\mathcal{L}(\theta_0),\mathcal{L}(\theta_0')),\ t\in\rset_+,
\]
where $C_8$ is given  in Lemma \ref{contractionconst} and the functional
$\mathcal{W}_{\rho_2}$ comes from \cite{eberle:guillin:zimmer:Trans:2019} with the choice $V:=V_2$,
for $\mu,\nu\in\mathcal{P}_{\, V_2}(\rset^d)$
\begin{equation}\label{lajjja}
\mathcal{W}_{\rho_2}(\mu,\nu) : =\inf_{\zeta\in\mathcal{C}(\mu,\nu)}\int_{\rset^d}\int_{\rset^d} f( |\theta-\theta'|)(1+ \epsilon V_2(\theta) + \epsilon V_2(\theta'))\zeta(\rmd \theta \rmd \theta'),
\end{equation}
where $f$ is a concave, bounded and non-decreasing continuous function and $\epsilon$ is a positive constant, for more details see   \cite[Section~5]{eberle:guillin:zimmer:Trans:2019}. Consequently, by using the definition of $\mathcal{W}_{\rho_2}$, one obtains
\begin{equation}\label{lajja}
C_{10} w_{1,2}(\mu,\nu)\leq
\mathcal{W}_{\rho_2}
(\mu,\nu)\leq C_{11} w_{1,2}(\mu,\nu),\quad  \mu,\nu\in\mathcal{P}_{\, V_2}(\rset^d),
\end{equation}
where $C_{10},C_{11}$ are calculated in Lemma \ref{contractionconst} below. Statement \eqref{karako} follows with $C_9=C_{11}/C_{10}$.

The same approach is used for  $\tzeta{\lambda}{t}{m}{\mathbf{x}}{\tilde{\theta}}$  and $\tzeta{\lambda}{t}{m}{\mathbf{x}}{\tilde{\theta}'}$, with the only difference being that we derive first the contraction on an interval of length at most one, since the contribution from the data sequence, through $x_{\lfloor t\rfloor}$, remains constant and thus, the drift coefficient remains autonomous for such an interval. More concretely,  \cite[Assumption~2.1]{eberle:guillin:zimmer:Trans:2019} holds in this case too with $\kappa$ constant and equal to $K_1$ due to \Cref{assum:lip}. \cite[Assumption~2.2]{eberle:guillin:zimmer:Trans:2019} is true with $V=V_2$ due to Lemma
\ref{lyapp}. Note that the statements in these Assumptions are uniform in $x$ (and thus identical for different values of $x_{\lfloor t\rfloor}$). Finally,  \cite[Assumption~2.5]{eberle:guillin:zimmer:Trans:2019} is also true due to \eqref{solymos}. Thus, the results of  \cite[Corollary 2.3]{eberle:guillin:zimmer:Trans:2019} apply in this case, too, and one concludes that
\begin{align} \label{eq:1_step_contra}
&\mathcal{W}_{\rho_2}
(\mathcal{L}(\tzeta{\lambda}{t}{m}{\mathbf{x}}{\tilde{\theta}}), \mathcal{L}(\tzeta{\lambda}{t}{m}{\mathbf{x}}{\tilde{\theta}'})) \nonumber \\
&\quad =
\mathcal{W}_{\rho_2}(\mathcal{L}(\tzeta{\lambda}{t}{\lfloor t\rfloor}{\mathbf{x}}{\tzeta{\lambda}{\lfloor t\rfloor}{m}{\mathbf{x}}{\tilde{\theta}}}), \mathcal{L}(\tzeta{\lambda}{t}{\lfloor t\rfloor}{\mathbf{x}}{\tzeta{\lambda}{\lfloor t\rfloor}{m}{\mathbf{x}}{\tilde{\theta}'}})) \nonumber \\
& \quad \leq  \rme^{-C_8 \lambda(t - \lfloor t\rfloor)}
\mathcal{W}_{\rho_2}\left(\mathcal{L}(\tzeta{\lambda}{\lfloor t\rfloor}{m}{\mathbf{x}}{\tilde{\theta}}),\mathcal{L}(\tzeta{\lambda}{\lfloor t\rfloor}{m}{\mathbf{x}}{\tilde{\theta}'})\right) \nonumber \\
& \quad \leq \rme^{-C_8 \lambda(t-(\lfloor t\rfloor-1))}
\mathcal{W}_{\rho_2}(\mathcal{L}(\tzeta{\lambda}{\lfloor t\rfloor - 1}{m}{\mathbf{x}}{\tilde{\theta}}),\mathcal{L}(\tzeta{\lambda}{\lfloor t\rfloor - 1}{m}{\mathbf{x}}{\tilde{\theta}'})) \nonumber \\
&\quad\le \ldots \nonumber \\
&\quad \le  \rme^{-C_8\lambda(t-m)}   \mathcal{W}_{\rho_2}(\mathcal{L}(\tilde{\theta}), \mathcal{L}(\tilde{\theta}')).
\end{align}
Observing as above that $\mathcal{W}_{\rho_2}$ is controlled from above and below by multiples of $w_{1,2}$, \eqref{eq:1_step_contra} yields the result.
\end{proof}

\subsection{The core lemmas}

Our arguments for handling the second term in \eqref{alambda} rest upon Lemmas \ref{vizier} and \ref{intermediate} below.
As a preparation, we first recall two lemmas: one on regular
versions and one on moment estimates that are closely related to the conditional $L$-mixing property.

\begin{lemma}\label{haa} For each
$n\in\nset$, there exists a measurable function $ h :\Omega\times \coint{nT,\infty}\times
\rset^d\to\rset^d$
such that, for each $t\geq nT$ and $\theta\in\rset^d$, $h_{nT,t}(\theta)(\omega)$
is a version of $\CPE{H(\theta,X_{\lfloor t\rfloor})}{\mathcal{H}_{nT}}$
for almost every $\omega\in\Omega$, $\theta\to h_{nT,t}(\theta)(\omega)$ is continuous.
\end{lemma}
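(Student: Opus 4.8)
The plan is to construct $h_{nT,t}(\theta)$ as a jointly measurable, $\theta$-continuous version of the conditional expectation by exploiting the Lipschitz structure of $H$ in $\theta$ (Assumption \ref{assum:lip}) together with a separability/density argument. First I would fix $n$ and work on the probability space $(\Omega,\mathcal{F},\P)$ with the $\sigma$-field $\mathcal{H}_{nT}$. For each rational time point $t$ in a countable dense subset of $\coint{nT,\infty}$ and each $\theta$ in a countable dense subset $D\subset\rset^d$ (say $D=\mathbb{Q}^d$), choose an arbitrary version $g_{t,\theta}$ of $\CPE{H(\theta,X_{\lfloor t\rfloor})}{\mathcal{H}_{nT}}$; since $\lfloor t\rfloor$ only takes countably many values, it suffices to do this for each integer $j\geq nT$ and each $\theta\in D$, obtaining $g_{j,\theta}$. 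Using the contraction property of conditional expectation together with \eqref{liipa}, for $\theta,\theta'\in D$ we have
\[
|g_{j,\theta}-g_{j,\theta'}| \leq \CPE{|H(\theta,X_j)-H(\theta',X_j)|}{\mathcal{H}_{nT}} \leq K_1|\theta-\theta'| \qquad \text{a.s.}
\]
Intersecting the corresponding null sets over the countably many pairs $(\theta,\theta')\in D\times D$ and integers $j$, we obtain a single $\P$-null set $N$ off which $\theta\mapsto g_{j,\theta}(\omega)$ is $K_1$-Lipschitz on $D$ for every $j$.

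Next I would extend: off $N$, the map $\theta\mapsto g_{j,\theta}(\omega)$ is uniformly continuous on the dense set $D$, hence admits a unique continuous extension to all of $\rset^d$, which I call $\theta\mapsto h_{j}(\theta)(\omega)$ (and I set $h_j(\theta)(\omega):=0$ for $\omega\in N$). Joint measurability in $(\omega,\theta)$ follows because $h_j(\theta)(\omega)=\lim_{k} g_{j,\theta_k(\theta)}(\omega)$ for a fixed sequence $\theta_k(\theta)\in D$ converging to $\theta$ (e.g.\ the nearest dyadic rational), and a pointwise limit of $\mathcal{H}_{nT}\otimes\mathcal{B}(\rset^d)$-measurable functions is measurable. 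Finally, define $h_{nT,t}(\theta)(\omega):=h_{\lfloor t\rfloor}(\theta)(\omega)$; measurability in $(\omega,t,\theta)$ holds since $t\mapsto\lfloor t\rfloor$ is Borel. It remains to check that for each fixed $t\geq nT$ and each fixed $\theta\in\rset^d$, $h_{nT,t}(\theta)$ is a version of $\CPE{H(\theta,X_{\lfloor t\rfloor})}{\mathcal{H}_{nT}}$: for $\theta\in D$ this holds by construction, and for general $\theta$ take $\theta_k\in D$ with $\theta_k\to\theta$; then $H(\theta_k,X_{\lfloor t\rfloor})\to H(\theta,X_{\lfloor t\rfloor})$ by \eqref{liipa}, and the convergence is dominated in $L^1$ (using \eqref{jojo} and $X_{\lfloor t\rfloor}\in L^1$, which holds by conditional $L$-mixing), so $\CPE{H(\theta_k,X_{\lfloor t\rfloor})}{\mathcal{H}_{nT}}\to\CPE{H(\theta,X_{\lfloor t\rfloor})}{\mathcal{H}_{nT}}$ in $L^1$, while the left side equals $h_{nT,t}(\theta_k)\to h_{nT,t}(\theta)$ a.s.\ by continuity; identifying the two limits gives the claim.

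The main obstacle is the bookkeeping of null sets: one must be careful that the exceptional set $N$ can be chosen once and for all (independent of $t$ and $\theta$), which is exactly what makes the countable dense set $D$ and the countability of the values of $\lfloor t\rfloor$ essential, and that joint measurability survives the extension-by-continuity step — this is handled cleanly by expressing the extension as an explicit pointwise limit along a deterministic approximating scheme rather than as an abstract closure. Everything else is routine, relying only on the Lipschitz bound \eqref{liipa}, the linear growth bound \eqref{jojo}, and the $L^1$-boundedness of $(X_n)$ furnished by Assumption \ref{assum:lmiu}.
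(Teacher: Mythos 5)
Your proof is correct, and it follows the same route as the paper: the paper likewise first reduces to the countably many integer values of $\lfloor t\rfloor$ and then simply invokes \cite[Lemma~8.5]{barkhagen2018stochastic} for the existence of a jointly measurable, $\theta$-continuous version at each fixed integer time. Your construction (Lipschitz bound on a countable dense set via conditional Jensen, a single exceptional null set, extension by uniform continuity, and identification of $L^1$ and a.s.\ limits to verify the version property) is exactly the content that the cited lemma supplies, so you have in effect given a self-contained proof of the step the paper outsources.
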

\begin{proof}
As $h_{nT,t}$, $t\in \coint{k,k+1}$ can be assumed constant
for each $k\in\nset$, it suffices to prove the existence of a measurable
$h_{nT,k}:\Omega\times\rset^d\to\rset^d$ which is continuous in its second variable, for each fixed $k$. This follows from \cite[Lemma~8.5]{barkhagen2018stochastic}.
\end{proof}

\begin{lemma}\label{lem:h_minus_cond_h}
Assume \Cref{assum:lip} and \Cref{assum:lmiu} and let $p \geq 1$. Then,
	\[
\sup_{n\in\nset}\E^{1/p}\left[\left(
\sum\nolimits_{k=nT}^{\infty} \sup\nolimits_{\theta\in\mathbb{R}^d}\left\Vert h_{k,nT}(\theta)- h(\theta)\right\Vert{}
\right)^{p}\right]\leq 2K_2 \Gamma_p^0(X) ,
    \]
where $\Gamma_p^0(X)$ is defined in \eqref{eq:definition-Gamma}.
\end{lemma}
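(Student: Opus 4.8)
The plan is to reduce everything to the maximal inequality for conditionally $L$-mixing processes, applied coordinate by coordinate to the data stream $X$. First I would fix $n\in\nset$ and recall from Lemma~\ref{haa} that $h_{k,nT}(\theta)$ is (a continuous version of) $\CPE{H(\theta,X_{\lfloor k\rfloor})}{\mathcal{H}_{nT}}$, while by Assumption~\ref{assum:lmiu}, $h(\theta)=\E[H(\theta,X_k)]$. The key observation is that, since $\mathcal{H}_{nT}=\mathcal{F}_\infty\vee\mathcal{G}_{nT}$ and $\mathcal{F}_\infty$ (the Brownian data) is independent of $\mathcal{G}_\infty\vee\sigma(\theta_0)$, conditioning on $\mathcal{H}_{nT}$ acts on $X_k$ exactly as conditioning on $\mathcal{G}_{nT}$ does; hence $h_{k,nT}(\theta)-h(\theta)=\CPE{H(\theta,X_k)-h(\theta)}{\mathcal{G}_{nT}}$. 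Writing $\bar X_k:=X_k-\E[X_k]$, the Lipschitz bound \eqref{liipa}, more precisely its coordinate form in Assumption~\ref{assum:lip}, gives for each coordinate $i$ that $|h_{k,nT}^i(\theta)-h^i(\theta)|\le K_2^i\,\CPE{|X_k-\E[X_k]|}{\mathcal{G}_{nT}}$ after a centering argument — here one uses that the map $\theta\mapsto H^i(\theta,\cdot)$ has Lipschitz constant $K_2^i$ in the data variable, so the conditional expectation of the $\theta$-dependent part is controlled uniformly in $\theta$ by the conditional fluctuation of $X_k$ around its mean.

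Next I would pass from the conditional expectation given $\mathcal{G}_{nT}$ to the $\tilde\gamma$-quantities of the mixing definition. The sum $\sum_{k=nT}^\infty \sup_\theta \|h_{k,nT}(\theta)-h(\theta)\|$ is then bounded by $\sum_i K_2^i \sum_{k\ge nT}\CPE{|X_k-\E[X_k]|}{\mathcal{G}_{nT}}$, and since $\E[X_k]=\CPE{X_k}{\mathcal{G}_0}$ up to the trivial $\sigma$-field, each term $\CPE{|X_k^i-\E[X_k^i]|}{\mathcal{G}_{nT}}$ is dominated by $\tilde\gamma^{nT}_p(k-nT,X,i)$ (reindexing $m=k-nT$ in the definition of $\tilde\gamma$, with $\tau=m$ and using that the best predictor bound in $\tilde\gamma$ is at least as good as subtracting the unconditional mean). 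Summing over $k$ (equivalently over $m\ge 0$) turns this into $\sum_i K_2^i\,\tilde\Gamma^{nT}_p(X,i)$. Then I would take $L^p$-norms, use the triangle inequality in $L^p$, and apply the definition $\Gamma^{nT}_p(X)=\sum_i\tilde\Gamma^{nT}_p(X,i)$ together with the (conditional) $L$-mixing $L^p$-boundedness of $(\Gamma^n_p(X))_n$; a supremum over $n$ of $\E^{1/p}[(\Gamma^{nT}_p(X))^p]$ is bounded by a constant, but in fact the argument is cleaner if one keeps the bound in the sharper form $2K_2\,\Gamma^0_p(X)$, exploiting that the relevant $\tilde\gamma$-sums starting at time $nT$ are, after the reindexing, controlled by those starting at time $0$ — this is where the factor $2$ and the appearance of $\Gamma^0_p$ rather than $\sup_n\Gamma^{nT}_p$ come from, and it mirrors the estimate in \cite[Lemma~6.4]{barkhagen2018stochastic}.

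The main obstacle I anticipate is the careful handling of the conditioning $\sigma$-fields: one must justify that plugging the deterministic $\theta$ into $h_{k,nT}$ and taking the supremum over $\theta\in\rset^d$ genuinely reduces to a $\theta$-free bound on the conditional fluctuation of $X_k$, which requires the uniform-in-$\theta$ Lipschitz control from Assumption~\ref{assum:lip} and the continuity from Lemma~\ref{haa} so that the supremum over $\theta$ is measurable and can be realized along a countable dense set. A secondary technical point is aligning the index shift between "$k$ running from $nT$ to $\infty$" and the "$\tau\ge 0$" and "$m\ge\tau$" indexing in the definitions of $\tilde\gamma$ and $\tilde\Gamma$, and verifying that subtracting the unconditional mean is dominated by the optimal-predictor expression defining $\tilde\gamma$ (so that no extra constant beyond the stated $2K_2$ is lost). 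Once these bookkeeping issues are settled, the estimate follows by the triangle inequality in $L^p$ exactly as in the cited lemmas of \cite{barkhagen2018stochastic}.
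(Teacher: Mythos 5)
The overall strategy---reducing $\sup_\theta|h_{nT,k}(\theta)-h(\theta)|$ to a $\theta$-free quantity via the coordinatewise Lipschitz constants $K_2^i$ and then summing mixing coefficients---is the right one (the paper itself only cites \cite[Lemma~4.9]{barkhagen2018stochastic}, where exactly this is carried out). However, your central reduction contains a genuine gap: you centre $X_k$ at its \emph{unconditional mean}, arriving at $\sup_\theta|h^i_{nT,k}(\theta)-h^i(\theta)|\le K_2^i\,\CPE{|X_k-\E[X_k]|}{\mathcal{G}_{nT}}$ (plus a dropped unconditional term), and then claim that $\CPE{|X_k-\E[X_k]|}{\mathcal{G}_{nT}}$ is dominated by $\tilde\gamma_p(k-nT,X,i)$. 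Both halves fail. First, $\CPE{|X_k-\E[X_k]|}{\mathcal{G}_{nT}}$ does not decay in $k$: already for a nondegenerate i.i.d.\ sequence with its natural filtration it equals the constant $\E|X_k-\E[X_k]|>0$ for every $k>nT$, so $\sum_{k\ge nT}\CPE{|X_k-\E[X_k]|}{\mathcal{G}_{nT}}=\infty$, while the right-hand side of the lemma is finite (and indeed, in that case $h_{nT,k}=h$ for $k>nT$, so the left-hand side has only one nonzero term). Second, the claimed domination runs in the wrong direction: $\tilde\gamma^0_p(\tau,X,i)$ subtracts the predictor $\CPE{X^i_m}{\mathcal{G}^+_{m-\tau}\vee\mathcal{G}_0}$, which is a far better approximation of $X^i_m$ than the constant $\E[X^i_m]$; it is the $\tilde\gamma$-quantity that is controlled by the mean-centred moment, not conversely, and only the predictor-centred quantities are assumed summable in $\tau$.

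The argument that closes (the one behind \cite[Lemma~4.9]{barkhagen2018stochastic}) introduces $X_k^+:=\CPE{X_k}{\mathcal{G}^+_{nT}\vee\mathcal{G}_{nT}}$ and splits $\CPE{H^i(\theta,X_k)}{\mathcal{G}_{nT}}-\E[H^i(\theta,X_k)]$ into $\CPE{H^i(\theta,X_k)-H^i(\theta,X_k^+)}{\mathcal{G}_{nT}}$ plus a remainder killed or controlled using the independence of $\mathcal{G}^+_{nT}$ from $\mathcal{G}_{nT}$; the Lipschitz bound in the data variable then yields two contributions, each dominated by $|X_k-X_k^+|$-type terms, and this is the source of the factor $2$ in $2K_2\Gamma^0_p(X)$---not a comparison between mean-centring and predictor-centring. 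With $\tau=k-nT$ these terms are exactly those summed in $\tilde\Gamma^0_p(X,i)$, which is also why the bound is uniform in $n$. Your measurability remark (realising the supremum over $\theta$ along a countable dense set via the continuity in Lemma~\ref{haa}) is fine, but without the predictor decomposition the proof does not go through.
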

\begin{proof}
See \cite[Lemma~4.9]{barkhagen2018stochastic}.
\end{proof}

Now we present the first core lemma.

\begin{lemma}\label{vizier} Assume \Cref{imit}, \Cref{assum:lip} and \Cref{assum:dissipativity} hold. There is
$C_{13}$ such that,
for each $0<\lambda\leq \lambda_{\max}$, and for all $t\in [nT,(n+1)T]$,
\[
{W}_{2}(\mathcal{L}(\tilde{Y}^{\lambda}_t(\mathbf{X})),\mathcal{L}(\bZ{\lambda}{nT}{t})) \leq C_{13}\lambda^{1/2}
[\rme^{-an/4}\E^{1/2}[V_{2}(\theta_{0})]+1].
\]
\end{lemma}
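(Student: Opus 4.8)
The plan is to compare, on the interval $[nT,(n+1)T]$ of length $T=\lfloor 1/\lambda\rfloor\approx 1/\lambda$, the frozen-data process $\tilde Y^\lambda_t(\mathbf{X})$ solving \eqref{mah} with the averaged process $\bZ{\lambda}{nT}{t}=\tZ{\lambda}{t}{nT}{\tilde Y^\lambda_{nT}(\mathbf{X})}$ solving \eqref{averrage}, both started from the common point $\tilde Y^\lambda_{nT}(\mathbf{X})$ at time $nT$. Since they share initial condition and the same Brownian motion $\tilde B^\lambda$, the difference $D_t:=\tilde Y^\lambda_t(\mathbf{X})-\bZ{\lambda}{nT}{t}$ satisfies a driftless-noise ODE: $\rmd D_t = -\lambda\bigl[H(\tilde Y^\lambda_t(\mathbf{X}),X_{\lfloor t\rfloor}) - h(\bZ{\lambda}{nT}{t})\bigr]\rmd t$, with $D_{nT}=0$. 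I would split the bracket as $[H(\tilde Y^\lambda_t,X_{\lfloor t\rfloor})-H(\bZ{\lambda}{nT}{t},X_{\lfloor t\rfloor})] + [H(\bZ{\lambda}{nT}{t},X_{\lfloor t\rfloor})-h(\bZ{\lambda}{nT}{t})]$. The first part is $\leq K_1|D_t|$ by \Cref{assum:lip}/\eqref{liipa}, giving a Gr\"onwall-type contribution; the second part is the genuine fluctuation term, a functional of the conditionally $L$-mixing sequence $(X_n)$ evaluated at the $\mathcal{H}_{nT}$-measurable (for the purposes of the conditional estimate, "frozen") trajectory $\bZ{\lambda}{nT}{t}$.

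The key steps, in order: (1) Write $|D_t|\leq \lambda\int_{nT}^t |H(\tilde Y^\lambda_u,X_{\lfloor u\rfloor})-H(\bZ{\lambda}{nT}{u},X_{\lfloor u\rfloor})|\,\rmd u + \lambda\bigl|\int_{nT}^t [H(\bZ{\lambda}{nT}{u},X_{\lfloor u\rfloor})-h(\bZ{\lambda}{nT}{u})]\,\rmd u\bigr|$, bound the first integrand by $K_1|D_u|$, and apply Gr\"onwall: $\sup_{nT\leq t\leq(n+1)T}|D_t| \leq \rme^{\lambda K_1 (t-nT)}\cdot \lambda\sup_{nT\leq t\leq (n+1)T}\bigl|\int_{nT}^t [\cdots]\,\rmd u\bigr|$, and since $\lambda T\leq 1$ the exponential factor is bounded by $\rme^{K_1}$, a constant. (2) For the fluctuation term, replace $H(\bZ{\lambda}{nT}{u},X_{\lfloor u\rfloor})-h(\bZ{\lambda}{nT}{u})$ by its conditional expectation plus a centered remainder: using Lemma~\ref{haa}, $\CPE{H(\theta,X_{\lfloor u\rfloor})}{\mathcal{H}_{nT}} = h_{nT,u}(\theta)$ evaluated at the $\mathcal{H}_{nT}$-measurable random point $\theta=\bZ{\lambda}{nT}{u}$, so the term splits into a "bias" part $\int_{nT}^t [h_{nT,u}(\bZ{\lambda}{nT}{u})-h(\bZ{\lambda}{nT}{u})]\,\rmd u$ controlled via Lemma~\ref{lem:h_minus_cond_h} (the sum $\sum_{k=nT}^\infty \sup_\theta\|h_{k,nT}(\theta)-h(\theta)\|$ is $L^2$-bounded by $2K_2\Gamma_2^0(X)$, a constant) and a martingale-like centered part $\int_{nT}^t W_u\,\rmd u$ with $W_u := H(\bZ{\lambda}{nT}{u},X_{\lfloor u\rfloor}) - h_{nT,u}(\bZ{\lambda}{nT}{u})$ satisfying $\CPE{W_u}{\mathcal{H}_{nT}}=0$. (3) Apply the maximal inequality Theorem~\ref{estim} (with $f\equiv 1$ on $[nT,(n+1)T]$, so $(\int f^2)^{1/2} = \sqrt{T}\leq \lambda^{-1/2}$) conditionally on $\mathcal{H}_{nT}$ to the process $W$, needing $M_r(\mathbf{W})+\Gamma_r(\mathbf{W})<\infty$; these mixing moments for $W$ follow from Lemma~\ref{lemma_63} applied to $Y_u=H(Z_u,W_u)$ with $Z_u = \bZ{\lambda}{nT}{u}$ being $\mathcal{H}_{nT}$-measurable (this is exactly why the process is frozen / one works conditionally on $\mathcal{H}_{nT}$), combined with uniform moment bounds on $\bZ{\lambda}{nT}{u}$ from Lemma~\ref{lem:z_moment_bounds} and Corollary~\ref{cor:moment_sup_process}; here the $\rme^{-an/4}\E^{1/2}[V_2(\theta_0)]+1$ factor enters because $\E[V_2(\bZ{\lambda}{nT}{\cdot})]$ decays like $\rme^{-\lambda C_6(2)nT}\approx \rme^{-an/2}$ and one takes square roots. (4) Collect: $\sup_t|D_t| \leq \rme^{K_1}\bigl[\lambda\cdot(\text{bias, size }\lambda T\cdot\text{const}) + \lambda\cdot C'(2)\lambda^{-1/2}(M_2+\Gamma_2)\bigr]$; the first term is $O(\lambda)$ hence $O(\sqrt\lambda)$ and the second is $O(\sqrt\lambda)$ times the moment factor, yielding ${W}_2^2 \leq \E[\sup_t|D_t|^2] \leq C_{13}^2\lambda\,[\rme^{-an/4}\E^{1/2}[V_2(\theta_0)]+1]^2$ after taking the expectation, squaring appropriately, and using that ${W}_2(\mathcal{L}(\tilde Y^\lambda_t(\mathbf{X})),\mathcal{L}(\bZ{\lambda}{nT}{t}))^2\leq \E|D_t|^2$ since $(\tilde Y^\lambda_t(\mathbf{X}),\bZ{\lambda}{nT}{t})$ is a valid coupling.

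The main obstacle I anticipate is Step (3): correctly setting up the conditional application of Theorem~\ref{estim}. One must verify that $\bZ{\lambda}{nT}{t}$ is genuinely $\mathcal{H}_{nT}$-measurable (true by construction, since $\tilde Y^\lambda_{nT}(\mathbf{X})$ is $\mathcal{H}_{nT}$-measurable and the SDE \eqref{averrage} is driven by $\tilde B^\lambda$ with $\mathcal{F}^\lambda\subset\mathcal{F}_\infty\subset\mathcal{H}_{nT}$ — one needs to double-check the filtration bookkeeping in \eqref{eq:definition-sigma-field-H} and that $(\tilde B^\lambda_t - \tilde B^\lambda_{nT})_{t\geq nT}$ is independent of $\mathcal{H}_{nT}$), so that $W_u = H(\bZ{\lambda}{nT}{u},X_{\lfloor u\rfloor}) - h_{nT,u}(\bZ{\lambda}{nT}{u})$ is a functional of the form covered by Lemma~\ref{lemma_63} with the "parameter" trajectory being $\mathcal{H}_{nT}$-measurable, hence its $M_r,\Gamma_r$ are controlled by those of $X$ (constants) and Theorem~\ref{estim} applies conditionally; one then takes expectations and uses Jensen to pass from the conditional $L^r$-bound to the unconditional one, also needing $r>2$, e.g.\ $r=4$, which is why moments of order up to $V_4$ and $\E|\theta_0|^4$ appear. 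A secondary technical point is handling the boundary term $\lambda\int_{(n+1)T}^{t}$-type contributions if $T$ does not exactly equal $1/\lambda$, but since $\lambda T\leq 1$ always, these are absorbed into constants. Finally, one must be slightly careful that the Gr\"onwall argument is applied pathwise before taking suprema and expectations, in the right order, to get the clean ${W}_2 \lesssim \sqrt\lambda$ bound with the stated $\theta_0$-dependence.
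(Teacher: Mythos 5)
Your proposal follows essentially the same route as the paper: the same three-way splitting of the drift difference (Lipschitz part, centered fluctuation $H(\bZ{\lambda}{nT}{\cdot},X_{\lfloor\cdot\rfloor})-h_{nT,\cdot}(\bZ{\lambda}{nT}{\cdot})$, and bias $h_{nT,\cdot}-h$), Gr\"onwall with $\rme^{\lambda K_1 T}\leq\rme^{K_1}$, Theorem~\ref{estim} applied conditionally on $\mathcal{H}_{nT}$ via Lemmas~\ref{haa}, \ref{lemma_63} and \ref{lem:h_minus_cond_h}, and Corollary~\ref{cor:moment_sup_process} to produce the $\rme^{-an/4}\E^{1/2}[V_2(\theta_0)]$ factor. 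The only differences are cosmetic (the paper localizes $\sup_s|\bZ{\lambda}{nT}{s}|$ on the events $F^{nT}_i$ to exploit the radius-dependent bound of Lemma~\ref{lemma_63}, and takes $r=3$ rather than $r=4$ in Theorem~\ref{estim}), so the argument is correct as proposed.
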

\begin{proof}
Fix $t\in [nT,(n+1)T]$. Let us estimate, using \Cref{assum:lip},
\begin{align*}
&\left|\tilde{Y}^{\lambda}_t(\mathbf{X})-\bZ{\lambda}{nT}{t}\right| \leq  \lambda \left\vert\int_{nT}^t
\left[H(\tilde{Y}^{\lambda}_s(\mathbf{X}),X_{\lfloor s\rfloor})-h(\bZ{\lambda}{nT}{s})\right]\, \rmd s\right\vert \\
&\leq
\lambda\int_{nT}^t
\left|H(\tilde{Y}^{\lambda}_s(\mathbf{X}),X_{\lfloor s\rfloor})-H(\bZ{\lambda}{nT}{s},
X_{\lfloor s\rfloor})\right|\, \rmd s
+ \lambda\left\vert\int_{nT}^t
\left[H(\bZ{\lambda}{nT}{s},X_{\lfloor s\rfloor})-h_{nT,s}(\bZ{\lambda}{nT}{s})\right]\, \rmd s\right\vert \\
&+ \lambda\int_{nT}^t
\left|h_{nT,s}(\bZ{\lambda}{nT}{s})-h(\bZ{\lambda}{nT}{s})\right|\, \rmd s \\
&\leq \lambda K_1 \int_{nT}^t\left|\tilde{Y}^{\lambda}_s(\mathbf{X})-\bZ{\lambda}{nT}{s}\right|\, \rmd s + \lambda A + \lambda B
\end{align*}
where $h_{nT,s}$ is defined in Lemma \ref{haa} and
\begin{align*}
A &\eqdef \sup_{u\in [nT,(n+1)T]}\left|\int_{nT}^u
\left[H(\bZ{\lambda}{nT}{s},X_{\lfloor s\rfloor})-h_{nT,s}(\bZ{\lambda}{nT}{s})\right]\, \rmd s\right|\\
B &\eqdef \int_{nT}^{\infty}
\sup_{\theta \in \rset^d} \left|h_{nT,s}(\theta)-h(\theta)\right|\ \rmd s,
\end{align*}
Now let us apply Gr\"onwall's lemma and take the square of both sides.
Using the elementary $(x+y)^2\leq 2(x^{2}+y^{2})$, $x,y\geq 0$,
we arrive at
\begin{equation} \label{eq:trafo}
\left|\tilde{Y}^{\lambda}_t(\mathbf{X})-\bZ{\lambda}{nT}{t}\right|^{2}
\leq  2\lambda^{2} \rme^{2 K_1\lambda T} \{A^2 + B^2 \}
\end{equation}
Introduce for all $i\in\nset$ the events
\begin{equation*}
F^{nT}_i:=\{i\leq \sup\nolimits_{s\in [nT,(n+1)T]}|\bZ{\lambda}{nT}{s}|<i+1\} \, ,
\end{equation*}
which are $\mathcal{H}_{nT}$-measurable.

We apply below Theorem~\ref{estim} in the following setting. Let $\mathcal{R}_{s}=\mathcal{H}_{nT+s}$
and $\mathcal{R}^{+}_{s}=\mathcal{H}_{nT+s}^{+}$ for $s\in\mathbb{R}_{+}$. Furthermore, let $W_{s}=W^{nT,i}_{s-nT}$
where we define
\[
W^{nT,i}_s:=\left(H(\bZ{\lambda}{nT}{s},X_{\lfloor  s \rfloor})-h_{nT,s}(\bZ{\lambda}{nT}{s})\right) \indi{F^{nT}_i},\quad
\quad i\in \nset, s \geq nT.
\]
Clearly, for $s \geq nT$, $\CPE{W^{nT,i}_s}{\mathcal{H}_{nT}}=0$. We now estimate the quantities $M_{p}(\mathbf{W})$, $\Gamma_{p}(\mathbf{W})$
appearing in Theorem~\ref{estim}.

For each fixed $\theta$, Lemma~\ref{lem:below} implies that the auxiliary process
$\tilde{W}^{\theta}_{s}:=H(\theta,X_{\lfloor nT+ s \rfloor})\indi{F^{nT}_i}$, $s\in\mathbb{R}_{+}$
satisfies
\[
M_{p}(\mathbf{\tilde{W}}^{\theta})\leq K_1 i + K_2 M^{nT}_{p}(\mathbf{X})+H^*\
\]
as well as
\[
\Gamma_{p}(\mathbf{\tilde{W}}^{\theta})\leq 2K_2 \Gamma^{nT}_{p}(\mathbf{X}).
\]
Hence Lemma~\ref{lemma_63} guarantees that we can plug
in the $\mathcal{H}_{nT}$-measurable process $\bZ{\lambda}{nT}{s}$ into $\tilde{W}^{\theta}_{s}$, getting
\[
M_{p}(\mathbf{\hat{W}})\leq K_1 i + K_2 M^{nT}_{p}(\mathbf{X})+H^*,\
\Gamma_{p}(\mathbf{\hat{W}})\leq 2K_2 \Gamma^{nT}_{p}(\mathbf{X})
\]
for the process defined by
$$
\hat{W}_{s}:=H(\bZ{\lambda}{nT}{s},X_{\lfloor nT+ s \rfloor})\indi{F^{nT}_i},\ s\in\mathbb{R}_{+}.
$$
Finally, by \cite[Remark~A.4]{chau:kumar:rasonyi:sabanis:2019} (or after a moment's reflection), we find that
\[
M_{p}(\mathbf{W})\leq 2[K_1 i + K_2 M^{nT}_{p}(\mathbf{X})+H^*],\
\Gamma_{p}(\mathbf{W})\leq 2K_2 \Gamma^{nT}_{p}(\mathbf{X}).
\]
Applying Theorem \ref{estim} with $r:=3$, we obtain
\begin{align*}
&\CPE[1/2]{\sup_{u\in [nT,(n+1)T]}
 \left|\int_{nT}^u
[H(\bZ{\lambda}{nT}{s},X_{\lfloor s\rfloor})-h_{nT,s}(\bZ{\lambda}{nT}{s})]
\, \rmd s\right|^{2} \indi{F^{nT}_i}}{\mathcal{H}_{nT}}\\
&\quad \leq  \CPE[1/3]{\sup_{u\in [nT,(n+1)T]}
 \left|\int_{nT}^u
[H(\bZ{\lambda}{nT}{s},X_{\lfloor s\rfloor})-h_{nT,s}(\bZ{\lambda}{nT}{s})]
\, \rmd s\right|^{3} \indi{F^{nT}_i}}{\mathcal{H}_{nT}}\\
& \quad \leq
2C'(3)\sqrt{T}[K_1 i + K_2 M^{nT}_{3}(\mathbf{X})+K_2\Gamma^{nT}_{3}(\mathbf{X})+H^*]\indi{F^{nT}_i}\\
& \quad \leq
20\sqrt{T}[K_1 (1+\sup\nolimits_{s\in [nT,(n+1)T]}|\bZ{\lambda}{nT}{s}|) + K_2 M^{nT}_{3}(\mathbf{X})+
K_2\Gamma^{nT}_{3}(\mathbf{X})+H^*]\indi{F^{nT}_i},
\end{align*}
noting that the constant $C'(3)$ appearing in Theorem \ref{estim} satisfies $C'(3)\leq 10$.
We can then estimate, noting $C_{6}(2)=a/2$ (see Lemma \ref{lyapp}),
\begin{align*}
 \E^{1/2}\left[A^{2}\right]
\leq& 20\sqrt{T}[K_{1}(E^{1/2}[\sup\nolimits_{s\in [nT,(n+1)T]}|\bZ{\lambda}{nT}{s}|^{2}]+1)
+K_{2}E^{1/2}[(M^{nT}_{3})^{2}] \\
& +K_{2}E^{1/2}[(\Gamma^{nT}_{3})^{2}]+H^{*}]\\
\leq& 20
\lambda^{-1/2}[K_{1}(\sqrt{3}(\rme^{-\lambda anT/4} \E^{1/2}[V_{2}(\theta_0)] +1 + {C}^{1/2}_{12}(2)) +K_{2}E^{1/2}[(M^{nT}_{3})^{2}]\\
&
+K_{2}E^{1/2}[(\Gamma^{nT}_{3})^{2}]+H^{*}]
\end{align*}
using Corollary~\ref{cor:moment_sup_process} with the choice $p=2$.

Finally, for any $t \in [nT,(n+1)T]$ and $\lambda \in \ocint{0,\lambda_{\max}}$, using \eqref{eq:trafo}
and Lemma~\ref{lem:h_minus_cond_h} we get
\begin{align}
{W}_{2}(\mathcal{L}(\tilde{Y}^{\lambda}_t(\mathbf{X})),\mathcal{L}(\bZ{\lambda}{nT}{t})) \leq& \E^{1/2}\left|\tilde{Y}^{\lambda}_t(\mathbf{X})-\bZ{\lambda}{nT}{t}\right|^{2} \nonumber\\
\leq&  20 \sqrt{2} \rme^{K_1} \lambda^{1/2}
[K_{1}(\sqrt{3}(\rme^{-\lambda anT/4} \E^{1/2}[V_{2}(\theta_0)]+1 +H^{*} + {C}^{1/2}_{12}(2))\nonumber\\
&
+K_{2}E^{1/2}[(M^{nT}_{3})^{2}]+K_{2}E^{1/2}[(\Gamma^{nT}_{3})^{2}]+2\lambda_{\mathrm{max}}K_{2}\Gamma_{2}^{0}(\mathbf{X})]
\label{pad}.
\end{align}
So we can conclude choosing
\begin{eqnarray*}
C_{13} &=& 20 \sqrt{2} \rme^{K_1}[K_{1}\sqrt{3} +K_{1}(1+{C}^{1/2}_{12}(2))+\\
&+& K_{2}
E^{1/2}[(M^{nT}_{3})^{2}]+K_{2}E^{1/2}[(\Gamma^{nT}_{3})^{2}]+H^{*}+2\lambda_{\mathrm{max}}K_{2}\Gamma_{2}^{0}(\mathbf{X})].
\end{eqnarray*}

\end{proof}
The second core lemma follows from the first and from Proposition \ref{prop:contra}.
\begin{lemma} \label{intermediate}
\label{lem:bound-overline-Z-L}
Assume \Cref{imit}, \Cref{assum:lip} and \Cref{assum:dissipativity}. For each $0<\lambda\leq\lambda_{\mathrm{max}}$,
$n\in\nset$ and $t\in \coint{nT,(n+1)T}$,
\begin{equation*}
W_1(\mathcal{L}(\bZ{\lambda}{nT}{t}),\mathcal{L}(L^{\lambda}_t)) \leq C_{14}[1+\rme^{-\min\{C_{8},a/4\}n/2}\E^{3/4}[V_{4}(\theta_0)]] \sqrt{\lambda}
\end{equation*}
for a suitable $C_{14}$, explicitly given in the proof.
\end{lemma}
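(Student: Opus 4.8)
The plan is the one announced just before the statement: propagate the one-step estimate of Lemma~\ref{vizier} with the contraction \eqref{karako}, telescoping along the grid $\{kT : 0\le k\le n\}$, where $T=\lfloor 1/\lambda\rfloor$ as in \eqref{eq:definition:T}. The starting observation is that $\bZ{\lambda}{nT}{t}$ and $L^{\lambda}_t$ are both solutions of the \emph{autonomous} averaged equation \eqref{averrage}: indeed $L^{\lambda}_t=\tZ{\lambda}{t}{0}{\theta_0}=\bZ{\lambda}{0}{t}$ since $\tilde{Y}^{\lambda}_0(\mathbf{X})=\theta_0$, and the flow property of \eqref{averrage} gives, for $t\ge kT$, $\bZ{\lambda}{(k-1)T}{t}=\tZ{\lambda}{t}{kT}{\bZ{\lambda}{(k-1)T}{kT}}$. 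Hence on $[kT,t]$ the processes $\bZ{\lambda}{kT}{t}$ and $\bZ{\lambda}{(k-1)T}{t}$ are driven by the same Brownian motion and differ only through their values at time $kT$, namely $\tilde{Y}^{\lambda}_{kT}(\mathbf{X})$ and $\bZ{\lambda}{(k-1)T}{kT}$. Both of these are measurable with respect to the randomness generated up to time $kT$ (the increments of $\tilde B^{\lambda}$ on $[0,kT]$, the variable $\theta_0$ and the data $\mathbf{X}$), which is independent of the increments of $\tilde B^{\lambda}$ on $[kT,\infty)$; therefore $\mathcal{L}(\bZ{\lambda}{kT}{t})$ and $\mathcal{L}(\bZ{\lambda}{(k-1)T}{t})$ are the images of $\mathcal{L}(\tilde{Y}^{\lambda}_{kT}(\mathbf{X}))$ and $\mathcal{L}(\bZ{\lambda}{(k-1)T}{kT})$ under the transition semigroup of \eqref{kako}, run for time $\lambda(t-kT)$.

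Given this, I would telescope via the triangle inequality for $W_1$ and pass to $w_{1,2}$ using \eqref{eq:lucia}:
\[
W_1(\mathcal{L}(\bZ{\lambda}{nT}{t}),\mathcal{L}(L^{\lambda}_t))\le\sum_{k=1}^{n}w_{1,2}\bigl(\mathcal{L}(\bZ{\lambda}{kT}{t}),\mathcal{L}(\bZ{\lambda}{(k-1)T}{t})\bigr).
\]
Reading \eqref{karako} as a contraction of the (time-changed) Langevin transition semigroup in $w_{1,2}$ — legitimate precisely because the initial value of each segment is independent of the Brownian increments driving that segment, as required by Proposition~\ref{prop:contra} — each summand is at most $C_9\rme^{-C_8\lambda(t-kT)}\,w_{1,2}\bigl(\mathcal{L}(\tilde{Y}^{\lambda}_{kT}(\mathbf{X})),\mathcal{L}(\bZ{\lambda}{(k-1)T}{kT})\bigr)$.

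The remaining factor is the one-step error at the grid point $kT$, which is exactly what Lemma~\ref{vizier} controls, applied with $n$ replaced by $k-1$ and $t=kT$: it gives the $W_2$-bound $C_{13}\lambda^{1/2}\bigl[\rme^{-a(k-1)/4}\E^{1/2}[V_2(\theta_0)]+1\bigr]$. I would convert this to a $w_{1,2}$-bound by Cauchy--Schwarz on an optimal quadratic coupling, using $V_2^2=V_4$: $w_{1,2}(\mu,\nu)\le W_2(\mu,\nu)\,\bigl(1+\E^{1/2}_{\mu}[V_4]+\E^{1/2}_{\nu}[V_4]\bigr)$. The fourth moments here are bounded uniformly in $t$ by \eqref{SDE_SGLDsmoments_q1} with $p=4$ for $\tilde{Y}^{\lambda}_{kT}(\mathbf{X})$ and by \eqref{z_moment_p1} with $p=4$ for $\bZ{\lambda}{(k-1)T}{kT}$ (recall $C_6(4)=a$, cf.\ also Corollary~\ref{aux_proc_conts_fourth_and_Y}), so the bracket is at most $C\bigl(1+\rme^{-ck}\E^{1/2}[V_4(\theta_0)]\bigr)$ for a suitable $c>0$. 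Multiplying the two factors and using $\E^{1/2}[V_2(\theta_0)]\le\E^{1/4}[V_4(\theta_0)]\le\E^{3/4}[V_4(\theta_0)]$ (the last step since $V_4\ge1$), every resulting term is dominated by $C\lambda^{1/2}\bigl(\rme^{-c'(k-1)}\E^{3/4}[V_4(\theta_0)]+1\bigr)$; this is the origin of the power $\E^{3/4}[V_4(\theta_0)]$ in the statement.

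It remains to sum over $k$. Since $t\in\coint{nT,(n+1)T}$ we have $\lambda(t-kT)\ge\lambda(n-k)T$ with $\lambda T$ of order $1$ for $0<\lambda\le\lambda_{\max}$, so $\rme^{-C_8\lambda(t-kT)}\le\rme^{-C_8'(n-k)}$ for a suitable $C_8'>0$. The resulting double sum is a discrete convolution of two geometric sequences; its ``$+1$'' part contributes a bounded constant and its exponential part at most $C\rme^{-\min\{C_8',c'\}n/2}\E^{3/4}[V_4(\theta_0)]$, the extra factor $\tfrac12$ absorbing the linear factor $n$ that occurs in the degenerate case of coinciding rates. Collecting the terms and multiplying by $\lambda^{1/2}$ yields the claimed estimate, with $C_{14}$ and the exponent $\min\{C_8,a/4\}$ read off from $C_8$, $C_9$, $C_{13}$ and the moment-decay rates. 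I expect two points to need the most care: first, justifying that the contraction of Proposition~\ref{prop:contra} — stated for initial data independent of $\mathcal{F}_{\infty}$ — applies to the random initial conditions $\tilde{Y}^{\lambda}_{kT}(\mathbf{X})$ and $\bZ{\lambda}{(k-1)T}{kT}$, which is handled by the semigroup reading above (equivalently, by conditioning on the information available at time $kT$); and second, the passage between $W_2$, $w_{1,2}$ and $W_1$ while keeping the $V_4$-weights under control, which is exactly why the uniform-in-time fourth-moment bounds are required upstream.
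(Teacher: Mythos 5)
Your proposal is correct and follows essentially the same route as the paper: the same telescoping along the grid $\{kT\}$, the same application of the contraction of Proposition~\ref{prop:contra} to each summand, the same Cauchy--Schwarz passage from $W_2$ to $w_{1,2}$ with $V_4$-weights controlled by Corollary~\ref{cor:SDE_SGLDsmoments} and Lemma~\ref{lem:z_moment_bounds}, the same use of Lemma~\ref{vizier} for the one-step error, and the same halving of the exponent to absorb the linear factor $n$ from the geometric convolution. Your remarks on the independence of the initial segments from the driving Brownian increments and on the exact size of $\lambda T$ are, if anything, slightly more careful than the paper's treatment.
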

\begin{proof}
Using telescopic sums,
\eqref{eq:lucia} and Proposition~\ref{prop:contra}, we get
\begin{align}\label{puszta}
 W_1(\mathcal{L}(\bZ{\lambda}{nT}{t}),\mathcal{L}(L^{\lambda}_t))
\leq &  \sum _{k=1}^n W_1\left(\mathcal{L}(\tZ{\lambda}{t}{kT}{\tilde{Y}^{\lambda}_{kT}(\mathbf{X})}),
\mathcal{L}(\tZ{\lambda}{t}{(k-1)T}{\tilde{Y}^{\lambda}_{(k-1)T}(\mathbf{X})})\right)\\ \nonumber
\leq& \sum_{k=1}^n w_{1,2}(\mathcal{L}(\tZ{\lambda}{t}{kT}{\tilde{Y}^{\lambda}_{kT}(\mathbf{X})}),
\mathcal{L}(\tZ{\lambda}{t}{kT}{\tZ{\lambda}{kT}{(k-1)T}{\tilde{Y}^{\lambda}_{(k-1)T}(\mathbf{X})}}))\\
\nonumber
\leq&
C_9\sum_{k=1}^n \exp\left(-C_8(n-k)\right)
w_{1,2}(\mathcal{L}(\tilde{Y}^{\lambda}_{kT}(\mathbf{X})),\mathcal{L}(\bZ{\lambda}{(k-1)T}{kT})).
\end{align}

Using the definitions \eqref{eq:definition-w-1-p} and \eqref{eq:definition-W-1} of $w_{1,2}$ and ${W}_2$, we get
from the Cauchy inequality that
\begin{align*}
w_{1,2}(\mathcal{L}(\tilde{Y}^{\lambda}_{kT}(\mathbf{X})),\mathcal{L}(\bZ{\lambda}{(k-1)T}{kT}))
\leq&
{W}_{2}(\mathcal{L}(\tilde{Y}^{\lambda}_{kT}(\mathbf{X})),\mathcal{L}(\bZ{\lambda}{(k-1)T}{kT})) \\
&\times  [1+ \{\E[V_4(\tilde{Y}^{\lambda}_{kT}(\mathbf{X}))]\}^{1/2} + \{\E[V_4(\bZ{\lambda}{(k-1)T}{kT})]\}^{1/2} ].
\end{align*}
Corollary \ref{cor:SDE_SGLDsmoments}, Lemma \ref{lem:z_moment_bounds} and Lemma \ref{vizier} imply that
\begin{align*}
w_{1,2}(\mathcal{L}(\tilde{Y}^{\lambda}_{kT}(\mathbf{X})),\mathcal{L}(\bZ{\lambda}{(k-1)T}{kT}))
\leq &
C_{13}\lambda^{1/2}
[\rme^{-a(k-1)/4}\E^{1/2}[V_{2}(\theta_{0})]+1]\\
\times &[1+2\rme^{-ak/2}\{\E[V_4(\theta_{0})]\}^{1/2}+\sqrt{3}\mathrm{v}_2(\overline{M}(4)) + \sqrt{6}\mathrm{v}_2(\overline{M}(4))]
\end{align*}
since $C_{6}(4)=a$.
For each $y\geq 0$ and $\alpha>0$, $e^{-\alpha y}(y+1)\leq 1+1/\alpha$.
In the estimations below we apply this latter observation with $\alpha=\min(C_8,a/4)/2$ and $y=n-1$.
Noticing that $\E^{1/2}[V_{2}(\theta_{0})]\leq \E^{1/4}[V_{4}(\theta_{0})]$, we can proceed as
\begin{eqnarray*} & &
\sum_{k=1}^n \exp\left(-C_8(n-k)\right)
w_{1,2}(\mathcal{L}(\tilde{Y}^{\lambda}_{kT}(\mathbf{X})),\mathcal{L}(\bZ{\lambda}{(k-1)T}{kT}))\\
&\leq&  C_{13}\lambda^{1/2} [2\E^{3/4}[V_4(\theta_{0})]+\E^{1/2}[V_4(\theta_{0})]+
(5\mathrm{v}_2(\overline{M}(4))+1)\E^{1/4}[V_{4}(\theta_{0})]]\\
&\times&
\sum_{k=1}^n \exp\left(-\min\{C_8,a/4\}(n-k+k-1)\right)\\
&+& C_{13}\lambda^{1/2}\frac{5\mathrm{v}_2(\overline{M}(4))+1}{1-\rme^{-C_{8}} }\\
&\leq& C_{13}\lambda^{1/2}n\exp\left(-\min\{C_8,a/4\}(n-1)\right)
[(5\mathrm{v}_2(\overline{M}(4))+4)\E^{3/4}[V_4(\theta_{0})] +5\mathrm{v}_2(\overline{M}(4))+1+1]\\
&+& C_{13}\lambda^{1/2}\frac{5\mathrm{v}_2(\overline{M}(4))+1}{1-\rme^{-C_{8}} }\\
&\leq& C_{13}\lambda^{1/2}\exp\left(-\min\{C_8,a/4\}(n-1)/2\right)\left(1+\frac{2}{\min\{C_8,a/4\}}\right)\\
&\times& [(5\mathrm{v}_2(\overline{M}(4))+4)\E^{3/4}[V_4(\theta_{0})] +5\mathrm{v}_2(\overline{M}(4))+2]\\
&+& C_{13}\lambda^{1/2}\frac{5\mathrm{v}_2(\overline{M}(4))+1}{1-\rme^{-C_{8}} },
\end{eqnarray*}
and we can set
\begin{eqnarray*}
C_{14}&=&C_{9}C_{13}\left(1+\frac{2}{\min\{C_8,a/4\}}\right)\rme^{\min\{C_8,a/4\}}\left[5\mathrm{v}_2(\overline{M}(4))+4\right]\\
&+& C_{9}C_{13}\left[\frac{5\mathrm{v}_2(\overline{M}(4))+1}{1-\rme^{-C_{8}} }+
(5\mathrm{v}_2(\overline{M}(4))+2)\left(1+\frac{2}{\min\{C_8,a/4\}}\right)\right].
\end{eqnarray*}
\end{proof}

\begin{corollary} \label{cor_vizier}
For each $nT\leq t<(n+1)T$,
\begin{eqnarray*}
W_1(\mathcal{L}(L^{\lambda}_t),\mathcal{L}(\tilde{Y}^{\lambda}_t(\mathbf{X})))\leq C_{15}[1+
\exp({-\min\{C_8,a/4\}n/2})\E^{3/4}[V_{4}(\theta_{0})]]\sqrt{\lambda},
\end{eqnarray*}
for some $C_{15}$, explicitly given in the proof.
\end{corollary}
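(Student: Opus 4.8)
The plan is to derive this estimate directly from the two core lemmas by the triangle inequality for $W_1$, using the law of $\bZ{\lambda}{nT}{t}$ as the intermediate point. First I would fix $n\in\nset$ and $t\in\coint{nT,(n+1)T}$ and write
\[
W_1(\mathcal{L}(L^{\lambda}_t),\mathcal{L}(\tilde{Y}^{\lambda}_t(\mathbf{X})))
\leq W_1(\mathcal{L}(L^{\lambda}_t),\mathcal{L}(\bZ{\lambda}{nT}{t}))
+ W_1(\mathcal{L}(\bZ{\lambda}{nT}{t}),\mathcal{L}(\tilde{Y}^{\lambda}_t(\mathbf{X}))).
\]
The first term on the right is exactly the quantity bounded in Lemma~\ref{intermediate}, which already has the desired shape $C_{14}[1+\rme^{-\min\{C_8,a/4\}n/2}\E^{3/4}[V_4(\theta_0)]]\sqrt{\lambda}$.

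For the second term I would invoke the elementary inequality $W_1\leq W_2$ (immediate from the Cauchy--Schwarz/Jensen inequality applied inside the definition \eqref{eq:definition-W-1}) together with the symmetry of $W_2$, and then apply Lemma~\ref{vizier}, which gives
\[
W_1(\mathcal{L}(\bZ{\lambda}{nT}{t}),\mathcal{L}(\tilde{Y}^{\lambda}_t(\mathbf{X})))
\leq C_{13}\lambda^{1/2}\left[\rme^{-an/4}\E^{1/2}[V_2(\theta_0)]+1\right].
\]
It then remains only to reconcile this bound with the form in the statement: since $\min\{C_8,a/4\}\leq a/4$ one has $\rme^{-an/4}\leq\rme^{-\min\{C_8,a/4\}n/2}$, and by Jensen's inequality (noting $V_4=V_2^2$ and $V_4\geq 1$) $\E^{1/2}[V_2(\theta_0)]\leq\E^{1/4}[V_4(\theta_0)]\leq 1+\E^{3/4}[V_4(\theta_0)]$, whence $\rme^{-an/4}\E^{1/2}[V_2(\theta_0)]+1\leq 2[1+\rme^{-\min\{C_8,a/4\}n/2}\E^{3/4}[V_4(\theta_0)]]$. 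Adding the two contributions yields the claim with an explicit constant, for instance $C_{15}:=C_{14}+2C_{13}$.

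There is no genuine obstacle here; this is a bookkeeping corollary that simply combines Lemmas~\ref{vizier} and \ref{intermediate}. The only point requiring a moment's attention is matching the decay rate $\rme^{-an/4}$ and the moment power $\E^{1/2}[V_2(\theta_0)]$ coming from Lemma~\ref{vizier} to the rate $\rme^{-\min\{C_8,a/4\}n/2}$ and the moment power $\E^{3/4}[V_4(\theta_0)]$ coming from Lemma~\ref{intermediate}, which is handled by the two elementary monotonicity estimates just mentioned. (The statement is of course understood to hold under \Cref{imit}, \Cref{assum:lip} and \Cref{assum:dissipativity}, inherited from the two lemmas it rests on.)
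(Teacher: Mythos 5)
Your proposal is correct and follows exactly the paper's argument: a triangle inequality through $\mathcal{L}(\bZ{\lambda}{nT}{t})$, Lemma~\ref{intermediate} for one term, Lemma~\ref{vizier} together with $W_1\leq W_2$ for the other, and the same elementary comparisons of the decay rates and moment powers, ending with the same constant $C_{15}=C_{14}+2C_{13}$.
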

\begin{proof} Notice that $\E^{1/2}[V_{2}(\theta_{0})]\leq \E^{1/4}[V_{4}(\theta_{0})]$.
Putting together our previous estimations, we arrive at
\begin{eqnarray*}
& & W_1(\mathcal{L}(\tilde{Y}^{\lambda}_t(\mathbf{X})),\mathcal{L}(L^{\lambda}_t)) \\
&\leq&
W_1(\mathcal{L}(\tilde{Y}^{\lambda}_t(\mathbf{X})),\mathcal{L}(\bZ{\lambda}{nT}{t}))+
W_1(\mathcal{L}(\bZ{\lambda}{nT}{t}),\mathcal{L}(L^{\lambda}_t))\\
&\leq& \sqrt{\lambda}[\rme^{-\min\{C_8,a/4\}n/2}
[C_{14}\E^{3/4}[V_{4}(\theta_{0})]+C_{13}\E^{1/4}[V_{4}(\theta_{0})]]
+C_{14}+C_{13}]\\
&\leq& \sqrt{\lambda}[\rme^{-\min\{C_8,a/4\}n/2}\E^{3/4}[V_{4}(\theta_{0})](C_{14}+C_{13})+C_{14}+2C_{13}]
\end{eqnarray*}
so we can set $C_{15}:=C_{14}+2C_{13}$.
\end{proof}

\subsection{Entropy estimates}

We develop in this subsection the estimates that are necessary for coping with the third term in \eqref{alambda}.
Although the principal ideas are well-known, see e.g.\ \cite{dalalyan:tsybakov:2012,dalalyan:2017,durmus:moulines:2017},
the details require rather tedious technicalities since the estimates depend on the ``frozen'' data stream $\mathbf{x}=(x_{n})_{n\in\mathbb{N}}$.

\begin{lemma}\label{kl} Assume \Cref{imit}, \Cref{assum:lip} and \Cref{assum:dissipativity} hold. For each
$0<\lambda\leq \lambda_{\max}$ (see \eqref{eq:definition-lambda-max}) and $n\in\nset$ we have, for all $t\in (nT,(n+1)T]$ and $\mathbf{x}\in(\rset^m)^{\nset}$, that
\[
W_1(\mathcal{L}(\tilde{Y}^{\lambda}_t(\mathbf{x})),
\mathcal{L}(Y^{\lambda}_t(\mathbf{x})))\leq
\lambda^{1/2} \rme^{-\min(C_8,a) n/2}C_{17} \E[ |\theta_0|^4]+ \lambda^{1/2} C_{18}(\mathbf{x},n,\lambda)\ .
\]
where $C_{17}$ and $C_{18}(\mathbf{x},n,\lambda)$ are given by \eqref{eq:definition-C23_0} and \eqref{eq:definition-C23_1a} below, respectively.
\end{lemma}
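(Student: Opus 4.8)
The plan is to run a block-by-block argument mirroring the proof of Lemma~\ref{intermediate}, with the contraction estimate \eqref{eq:contra_w_1_2} of Proposition~\ref{prop:contra} (now for the frozen SDE \eqref{eq:aux_proc_conts}) as the glue and a Girsanov/Kullback--Leibler estimate controlling the per-block discretization error. Fix $n$ and $t\in(nT,(n+1)T]$, and for $k=0,1,\dots,n$ set $U^{(k)}_t:=\tzeta{\lambda}{t}{kT}{\mathbf{x}}{Y^{\lambda}_{kT}(\mathbf{x})}$, the solution of \eqref{eq:aux_proc_conts} issued from the Euler value $Y^{\lambda}_{kT}(\mathbf{x})$ at time $kT$; note $U^{(0)}_t=\tilde{Y}^{\lambda}_t(\mathbf{x})$ since $Y^{\lambda}_0(\mathbf{x})=\theta_0$. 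By the triangle inequality for $W_1$,
\[
W_1(\mathcal{L}(\tilde{Y}^{\lambda}_t(\mathbf{x})),\mathcal{L}(Y^{\lambda}_t(\mathbf{x})))\le \sum_{k=1}^{n} W_1(\mathcal{L}(U^{(k-1)}_t),\mathcal{L}(U^{(k)}_t)) + W_1(\mathcal{L}(U^{(n)}_t),\mathcal{L}(Y^{\lambda}_t(\mathbf{x}))),
\]
the last term accounting for the discretization on the incomplete block $[nT,t]$.

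For each $k\in\{1,\dots,n\}$, the flow property of \eqref{eq:aux_proc_conts} gives $U^{(k-1)}_t=\tzeta{\lambda}{t}{kT}{\mathbf{x}}{\tzeta{\lambda}{kT}{(k-1)T}{\mathbf{x}}{Y^{\lambda}_{(k-1)T}(\mathbf{x})}}$, so $U^{(k-1)}_t$ and $U^{(k)}_t$ follow the same frozen dynamics on $[kT,t]$, started respectively at the \emph{exact} frozen solution over $[(k-1)T,kT]$ and at the \emph{Euler} iterate $Y^{\lambda}_{kT}(\mathbf{x})$ — both issued from the common point $Y^{\lambda}_{(k-1)T}(\mathbf{x})$. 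Hence \eqref{eq:lucia} and \eqref{eq:contra_w_1_2} give
\[
W_1(\mathcal{L}(U^{(k-1)}_t),\mathcal{L}(U^{(k)}_t))\le C_9\,\rme^{-C_8\lambda(t-kT)}\,w_{1,2}\bigl(\mathcal{L}(\tzeta{\lambda}{kT}{(k-1)T}{\mathbf{x}}{Y^{\lambda}_{(k-1)T}(\mathbf{x})}),\,\mathcal{L}(Y^{\lambda}_{kT}(\mathbf{x}))\bigr),
\]
and a similar bound without contraction factor handles the last term over $[nT,t]$. (A minor technical point: the ``initial conditions'' here are functions of the driving Brownian motion, so to invoke Proposition~\ref{prop:contra} one first re-realizes the frozen dynamics on $[kT,t]$ with an independent Brownian increment of the correct law, which does not affect the laws entering $w_{1,2}$.)

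It then remains to bound, for one block, the quantity $w_{1,2}$ between the exact frozen solution and the interpolated Euler scheme \eqref{mahh} \emph{started from the same value}. Here I would follow \cite{dalalyan:tsybakov:2012}: since both processes are driven by $\tilde{B}^{\lambda}$ and the Euler drift on $[(k-1)T,kT]$ is piecewise constant, Girsanov's theorem expresses the Kullback--Leibler divergence of the two path-laws over the block as $\tfrac{\beta}{4\lambda}\int \E|\lambda H(\text{frozen sol. at }s)-\lambda H(\text{frozen sol. at }\lfloor s\rfloor)|^2\,\rmd s$; by \eqref{liipa}, a one-step increment estimate of order $\lambda$ for the frozen solution (dominated by its Gaussian increment, using \eqref{jojo} and the $V_p$-moment bounds of Lemma~\ref{lem:moment_SGLD_2p} and Corollary~\ref{aux_proc_conts_fourth_and_Y}), and the fact that $\lambda T$ is of order one, this divergence is $O(\lambda)$ up to a factor that depends polynomially on $\rme^{-a(k-1)}\E[V_4(\theta_0)]$, on moments of $x_{(k-1)T},\dots,x_{kT-1}$, and on a universal constant. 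Converting this entropy bound into the claimed $w_{1,2}$ bound at the \emph{sharp} rate $\lambda^{1/2}$ — through a transportation--entropy inequality whose constant is controlled by the $V_p$-moments, together with the truncated, $V_2$-weighted structure of $w_{1,2}$ in \eqref{eq:definition-w-1-p} — is the delicate step, and the main obstacle: a naive application of Pinsker followed by Cauchy--Schwarz would yield only rate $\lambda^{1/4}$, so one must exploit that \emph{all} polynomial moments of the processes are controlled (they need not be sub-Gaussian, since $\theta_0$ need not be).

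Finally I would sum over $k$: since $t-kT\ge (n-k)T$ and $\lambda T\ge 1/2$ for $\lambda$ small, $\rme^{-C_8\lambda(t-kT)}\le \rme^{-(C_8/2)(n-k)}$, so the $\theta_0$-part of the per-block bounds (carrying the extra $\rme^{-a(k-1)}$) sums, after bounding $V_4(\theta_0)=(1+|\theta_0|^2)^2$ by a multiple of $|\theta_0|^4+1$, to $C_{17}\rme^{-\min(C_8,a)n/2}\E[|\theta_0|^4]$, while the universal and $\mathbf{x}$-dependent pieces sum geometrically to $\lambda^{1/2}C_{18}(\mathbf{x},n,\lambda)$; the incomplete block over $[nT,t]$ contributes a term of the same shape and is absorbed likewise, yielding \eqref{eq:definition-C23_0}--\eqref{eq:definition-C23_1a} and the stated bound. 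Besides the sharp entropy-to-$w_{1,2}$ conversion, the only real care needed is bookkeeping: keeping the $\theta_0$-dependence exponentially localized in $n$ while letting the (for fixed $\mathbf{x}$ possibly large) data-dependent moment contributions land entirely in $C_{18}(\mathbf{x},n,\lambda)$, which is what makes the subsequent averaging over $\mathbf{X}$ possible.
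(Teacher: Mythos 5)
Your architecture is exactly the paper's: the same telescoping over blocks of length $T$, the same use of the contraction \eqref{eq:contra_w_1_2} to propagate the per-block errors, the same Girsanov computation showing the per-block Kullback--Leibler divergence is $O(\lambda)$, and the same summation with the $\theta_0$-dependence kept exponentially localized in $n$ and the data-dependent moments dumped into $C_{18}(\mathbf{x},n,\lambda)$. (Your remark that the initial conditions fed into Proposition~\ref{prop:contra} must be decoupled from the driving noise is a legitimate point the paper passes over silently, and evaluating the Girsanov integrand along the frozen solution rather than the Euler path, as you do, is an immaterial choice.) The one step you leave open --- and explicitly label ``the main obstacle'' --- is precisely what the paper isolates as Lemma~\ref{lem:domination-w-1}:
\[
w_{1,2}(\xi,\xi')\le\sqrt{2}\,\bigl\{1+[\xi(V_4)]^{1/2}+[\xi'(V_4)]^{1/2}\bigr\}\,\{\operatorname{KL}(\xi,\xi')\}^{1/2}.
\]
The proof is short: since $1\wedge|\theta-\theta'|$ vanishes on the diagonal, evaluating the infimum in \eqref{eq:definition-w-1-p} at the optimal coupling for the $V$-weighted total variation (the one that keeps all the mass of $\xi\wedge\xi'$ in place) gives $w_{1,2}(\xi,\xi')\le\|\xi-\xi'\|_{\operatorname{TV}}+\|\xi-\xi'\|_{V_2}$, and the weighted Pinsker inequality of \cite[Lemma~24]{durmus:moulines:2017} bounds the right-hand side by a $V_4$-moment factor times $\operatorname{KL}^{1/2}$. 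So the transportation--entropy inequality you postulate does exist, requires only the fourth moments you already control via Lemma~\ref{lem:moment_SGLD_2p} and Corollary~\ref{aux_proc_conts_fourth_and_Y}, and delivers the full rate $\lambda^{1/2}$; your diagnosis that plain Pinsker followed by Cauchy--Schwarz would lose a square root is correct and is exactly why the weighted version is needed. With that lemma supplied, your proof closes and coincides with the paper's.
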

\begin{proof}
Recall \eqref{eq:aux_proc_conts} and observe that $\tilde{Y}^{\lambda}_t(\mathbf{x})=\tzeta{\lambda}{t}{0}{\mathbf{x}}{\theta_0}$.
Using telescopic sums, we get for $t \in (nT,(n+1)T]$,
\begin{align*}
&W_1(\mathcal{L}(\tilde{Y}^{\lambda}_t(\mathbf{x})),
\mathcal{L}(Y^{\lambda}_t(\mathbf{x})))= W_1(\mathcal{L}(\tzeta{\lambda}{t}{0}{\mathbf{x}}{\theta_0}),
\mathcal{L}(Y^{\lambda}_t(\mathbf{x})))  \\
& \quad \le    \sum_{k=1}^n    W_1(\mathcal{L}(\tzeta{\lambda}{t}{kT}{\mathbf{x}}{Y^{\lambda}_{kT}(\mathbf{x})}),
\mathcal{L}(\tzeta{\lambda}{t}{(k-1)T}{\mathbf{x}}{Y^{\lambda}_{(k-1)T}(\mathbf{x})}))    \\
& \quad +  W_1(\mathcal{L}(\tzeta{\lambda}{t}{nT}{\mathbf{x}}{Y^{\lambda}_{nT}(\mathbf{x})}),
\mathcal{L}(Y^{\lambda}_t(\mathbf{x}))) \\
&\quad \le \sum_{k=1}^n    w_{1,2} (\mathcal{L}(\tzeta{\lambda}{t}{kT}{\mathbf{x}}{Y^{\lambda}_{kT}(\mathbf{x})}),
\mathcal{L}(\tzeta{\lambda}{t}{kT}{\mathbf{x}}{\tzeta{\lambda}{kT}{(k-1)T}{\mathbf{x}}{Y^{\lambda}_{(k-1)T}(\mathbf{x})}}))    \\
& \quad +  w_{1,2}(\mathcal{L}(\tzeta{\lambda}{t}{nT}{\mathbf{x}}{Y^{\lambda}_{nT}(\mathbf{x})}),
\mathcal{L}(Y^{\lambda}_t(\mathbf{x}))),
\end{align*}
where the domination of $W_1$ by $w_{1,2}$ is used, see \eqref{eq:lucia}.
In view of Proposition \ref{prop:contra}, and in particular inequality \eqref{eq:contra_w_1_2}, one obtains
\begin{multline}\label{W_1-estimation}
 W_1(\mathcal{L}(\tilde{Y}^{\lambda}_t(\mathbf{x})),
\mathcal{L}(Y^{\lambda}_t(\mathbf{x})))\le
   C_9 \sum_{k=1}^n \rme^{-C_8(n-k)}   w_{1,2}(\mathcal{L}(Y^{\lambda}_{kT}(\mathbf{x})),
\mathcal{L}(\tzeta{\lambda}{kT}{(k-1)T}{\mathbf{x}}{Y^{\lambda}_{(k-1)T}(\mathbf{x})})   \\ +  w_{1,2}(\mathcal{L}(\tzeta{\lambda}{t}{nT}{\mathbf{x}}{Y^{\lambda}_{nT}(\mathbf{x})}),
\mathcal{L}(Y^{\lambda}_t(\mathbf{x}))).
\end{multline}
At this point, one notes that due to Lemma \ref{lem:domination-w-1}, for any two probability measures $\mu$ and $\nu$ on $\mathcal{B}(\rset^d)$,
\begin{equation}\label{dune1}
w_{1,2}(\mu,\nu)\leq \sqrt{2} \left \{ 1 +  [\mu(V_4)]^{1/2} + [\nu(V_4)]^{1/2} \right\} \left\{ \operatorname{KL}(\mu,\nu) \right\}^{1/2} \, .
\end{equation}
where $\operatorname{KL}(\mu,\nu)$ denotes the Kullback-Leibler divergence. Thus
\begin{align} \label{w_{1,2}_and_KL}
w_{1,2} (\mathcal{L}(Y^{\lambda}_{kT}(\mathbf{x})),
\mathcal{L}(\tzeta{\lambda}{kT}{(k-1)T}{\mathbf{x}}{Y^{\lambda}_{(k-1)T}(\mathbf{x})}) \leq \sqrt{2\lambda} A^{1/2}_k B^{1/2}_k \leq \sqrt{\lambda/2} \{A_k + B_k\}
\end{align}
where
\begin{align}
\label{eq:definition-A-k}
A_k &\eqdef \lambda^{-1} \operatorname{KL}\left(\mathcal{L}(Y^{\lambda}_{kT}(\mathbf{x})), \mathcal{L}(\tzeta{\lambda}{kT}{(k-1)T}{\mathbf{x}}{Y^{\lambda}_{(k-1)T}(\mathbf{x})})\right) \\
\label{eq:definition-B-k}
B_k &\eqdef  \{1 + \E^{1/2}[V_4(Y^{\lambda}_{kT}(\mathbf{x}))] + \E^{1/2}[V_4(\tzeta{\lambda}{kT}{(k-1)T}{\mathbf{x}}{Y^{\lambda}_{(k-1)T}(\mathbf{x})})] \}^2
\end{align}
and $1\le k \le n$. For $a<b$, $\mathbf{C}[a,b]$ denotes the Banach space of $\rset^d$-valued
continuous functions on the interval $[a,b]$.
Let $\hat{\mathcal{Q}}_k$ denote the law of the process $\tzeta{\lambda}{s}{(k-1)T}{\mathbf{x}}{Y^{\lambda}_{(k-1)T}(\mathbf{x})}$,
$s\in [(k-1)T,kT]$ on $\mathbf{C}[(k-1)T,kT]$.
Similarly, let $\mathcal{Q}_k$ denote the law of $Y_s^{\lambda}(\mathbf{x})$, $s\in [(k-1)T,kT]$. Lemma \ref{lsKL} implies that these two probability laws are equivalent. Thus, in view of \eqref{eq:definition-KL}, one then calculates
\begin{align} \label{KL_estimate}
A_k &\leq \frac{1}{\lambda} \operatorname{KL}(\hat{\mathcal{Q}}_k\Vert\mathcal{Q}_k)\nonumber \\
&= \frac{1}{\lambda}\frac{1}{2}\frac{\beta}{2\lambda}\lambda^{2}\int_{(k-1)T}^{kT}
\E|H(Y^{\lambda}_{\lfloor s\rfloor}(\mathbf{x}),x_{\lfloor s\rfloor}) - H(Y^{\lambda}_s(\mathbf{x}),x_{\lfloor s\rfloor})|^2\, \rmd s\nonumber \\
&\leq \frac{\beta K_1^2}{4}\int_{(k-1)T}^{kT}
\E|Y^{\lambda}_{\lfloor s\rfloor}(\mathbf{x}) - Y^{\lambda}_s(\mathbf{x})|^2 \rmd s \nonumber\\
\nonumber
&= \frac{\beta K_1^2}{4} \sum_{j=(k-1)T}^{kT-1} \int_j^{j+1} \E | -\lambda H( Y_j^{\lambda}(\mathbf{x}),x_j) (s-j) +
\sqrt{2 \lambda/\beta} ( \tilde{B}^\lambda_{s} - \tilde{B}^\lambda_{j})|^2 \rmd s  \\
&= \frac{\beta K_1^2}{4} \sum_{j=(k-1)T}^{kT-1}
\left\{ (1/3) \lambda^2\E| H( Y_j^{\lambda}(\mathbf{x}),x_j) |^2 + \fraca{d\lambda}{\beta}\right\} \nonumber\\
&\le \frac{\beta K_1^2}{4} \sum_{j=(k-1)T}^{kT-1} \left\{ \lambda^2 \left[(H^*)^2 + K_1^2 \E |Y_j^{\lambda}(\mathbf{x})|^2  +
K_2^2 | x_j |^2 \right] + \fraca{d\lambda}{\beta}\right\} \nonumber \\
&\le  \bar{C}^{0}(\lambda,\theta_0)  (1-a\lambda)^{(k-1)T}+   \bar{C}^1_k(\bx,\lambda)
\end{align}
where, due to \eqref{eq:moment_SGLD_2}, $\bar{C}^{0}(\lambda,\theta_0)= \lambda \beta K_1^4/(4a)  \E|\theta_0|^2$ and
\begin{multline}
\label{eq:definition-bar-C-k}
\bar{C}^1_k(\bx,\lambda) =   K_1^2 \{1+ \lambda\beta (H^*)^2 + \lambda\beta \cmomentone K_1^2\}/4 + (\lambda^2\beta   K_1^2K_2^2/4) \sum\nolimits_{j=(k-1)T}^{kT-1} |x_j|^2 \\
+ (\lambda^3 \beta K_1^4 c_0/4) \sum\nolimits_{j=(k-1)T}^{kT-1} \sum\nolimits_{l=0}^{j-1} (1-a \lambda)^l |x_{j-1-l}|^2
\end{multline}
where in the case of $k=1$ and $j=0$ the last sum is meant to be $0$.
Moreover, one calculates the bound for $B_k$. Using Lemma~\ref{lem:moment_SGLD_2p} yields that
\begin{align}
\label{eq:bound-moment-4-1}
\E[V_4(Y^{\lambda}_{kT}(\mathbf{x}))] 	 & \leq 2+ 2 (1-a\lambda)^{kT} \E|\theta_0|^{4} +2 \lambda a M(2,d)
 \sum_{j=1}^{kT-1} \left(1-a\lambda\right)^{j-1}|x_{kT-1-j}|^{4}  + 2 \widehat{M}(2,d) \nonumber \\ & =
 2 (1-a\lambda)^{kT} \E|\theta_0|^{4} + D_{k}(\bx,\lambda),
\end{align}
where
\begin{equation}
\label{D_k}
D_{k}(\bx,\lambda) : = 2 \lambda a M(2,d)
 \sum_{j=1}^{kT-1} \left(1-a\lambda\right)^{j-1}|x_{kT-1-j}|^{4}  + 2 \widehat{M}(2,d) + 2.
\end{equation}
Similarly, one obtains, due to Lemma~\ref{lem:moment_SGLD_2p} and Corollary~\ref{aux_proc_conts_fourth_and_Y},
\begin{equation}
\label{eq:bound-moment-4-2}
\E[V_4(\tzeta{\lambda}{kT}{(k-1)T}{\mathbf{x}}{Y^{\lambda}_{(k-1)T}(\mathbf{x})})] 	
\leq 2\rme^{-a}(1-a \lambda)^{(k-1)T} \E|\theta_0|^{4} + \rme^{-a}D_{k-1}(\bx,\lambda) + 3 \mathrm{v}_4(\overline{M}(4))
\end{equation}
By observing \eqref{w_{1,2}_and_KL}, \eqref{KL_estimate}, \eqref{eq:bound-moment-4-1} and \eqref{eq:bound-moment-4-2}, it follows that, for $k=1,\ldots,n$,

\begin{align} \label{semifinal_w_{1,2}_estimate1}
w_{1,2}(\mathcal{L}(Y^{\lambda}_{kT}(\mathbf{x})),
\mathcal{L}(\tzeta{\lambda}{kT}{(k-1)T}{\mathbf{x}}{Y^{\lambda}_{(k-1)T}(\mathbf{x})}) &  \leq \sqrt{\lambda} \left\{(1-a \lambda)^{(k-1)T} \hat{C}^{0}(\lambda,\theta_0) + \hat{C}^1_k(\bx,\lambda) \right\},
\end{align}
where $\hat{C}^0(\lambda,\theta_0)= \bar{C}^{0}(\lambda,\theta_0) + 12 \E|\theta_0|^4$ and
\begin{equation}
\label{eq:definition-hat-C-k}
\hat{C}_k^1(\bx,\lambda)= \bar{C}_k^{1}(\bx, \lambda) +  3  + 3D_{k}(\bx,\lambda) + 3D_{k-1}(\bx,\lambda)+ 9\mathrm{v}_4(\overline{M}(4)) .
\end{equation}
In a similar manner as above, see \eqref{w_{1,2}_and_KL}, one estimates, for any $t\in (nT,(n+1)T]$,
\begin{align} \label{w_{1,2}_and_KL at t}
 w_{1,2}(\mathcal{L}(\tzeta{\lambda}{t}{nT}{\mathbf{x}}{Y^{\lambda}_{nT}(\mathbf{x})}),
\mathcal{L}(Y^{\lambda}_t(\mathbf{x})))  & \le  \sqrt{\frac{\lambda}{2}} \{  (1-a\lambda)^{nT} \bar{C}^{0}(\lambda,\theta_0)+   \bar{C}^1_{n+1}(\bx,\lambda) + B\}
\end{align}
where
\begin{align}
\label{eq:definition-B}
B &\eqdef  \{1 + \E^{1/2}[V_4(Y^{\lambda}_t(\mathbf{x})] + \E^{1/2}[V_4(\tzeta{\lambda}{t}{nT}{\mathbf{x}}{Y^{\lambda}_{nT}(\mathbf{x})})] \}^2.
\end{align}
Thus, due to  Lemmas~\ref{lem:aux_proc_conts_V_p} and \ref{lem:moment_SGLD_2p} and Corollary~\ref{aux_proc_conts_fourth_and_Y},
\begin{equation}
\label{eq:bound-moment-4-2 at n}
\E[V_4(\tzeta{\lambda}{t}{nT}{\mathbf{x}}{Y^{\lambda}_{(k-1)T}(\mathbf{x})})] 	 \leq 2(1-a \lambda)^{nT} \E|\theta_0|^{4} + D_{n}(\bx,\lambda) + 3 \mathrm{v}_4(\overline{M}(4))
\end{equation}
and, analogously, due to equation \eqref{eq:moment_SGLD_2p}, for any $t\in (m, m+1] \subset (nT,(n+1)T]$, where $m$ is a positive integer, the following holds
\begin{align}
\label{eq:bound-moment-4-1 at t}
\E[V_4(Y^{\lambda}_{t}(\mathbf{x}))] \leq & 2 + 2(1-a\lambda(t-m))(1-a\lambda)^{m} \E|\theta_0|^{4} + 2\widehat{M}(2,d)
\nonumber \\ & +2\lambda a M(2,d) \left\{|x_m|^{4} + (1-a\lambda(t-m)) \sum\nolimits_{j=1}^{m} \left(1-a\lambda\right)^{j-1}|x_{m-j}|^{4}  \right\} \nonumber\\	
  \leq & 2 (1-a\lambda)^{nT} \E|\theta_0|^{4} + D_{t, T}(\bx,\lambda),
\end{align}
where
\[
D_{t, T}:= 2+2\lambda a M(2,d) \left\{|x_m|^{4} +  \sum\nolimits_{j=1}^{m} \left(1-a\lambda\right)^{j-1}|x_{m-j}|^{4}  \right\} + 2\widehat{M}(2,d).
\]
Consequently, equations \eqref{w_{1,2}_and_KL at t}, \eqref{eq:definition-B}, \eqref{eq:bound-moment-4-2 at n} and \eqref{eq:bound-moment-4-1 at t}, yield that
\begin{align} \label{semifinal_w_{1,2}_estimate2}
 w_{1,2}(\mathcal{L}(\tzeta{\lambda}{t}{nT}{\mathbf{x}}{Y^{\lambda}_{nT}(\mathbf{x})}),
\mathcal{L}(Y^{\lambda}_t(\mathbf{x})))  & \le\sqrt{\lambda}  \left\{(1-a \lambda)^{nT} \hat{C}^{0}(\lambda,\theta_0) + \hat{C}^1_{t,T}(\bx,\lambda) \right\},
\end{align}
where
\begin{equation}
\label{eq:definition-hat-C-{t,T}}
\hat{C}_{t,T}^1(\bx,\lambda)= \bar{C}_{n+1}^{1}(\bx, \lambda) +  3  + 3D_{n}(\bx,\lambda) + 3D_{t,T}(\bx,\lambda)+ 9\mathrm{v}_4(\overline{M}(4)) .
\end{equation}
Finally, equations \eqref{W_1-estimation}, \eqref{semifinal_w_{1,2}_estimate1} and \eqref{semifinal_w_{1,2}_estimate2} yield that
\begin{align*}
W_1(\mathcal{L}(\tilde{Y}^{\lambda}_t(\mathbf{x})), \mathcal{L}(Y^{\lambda}_t(\mathbf{x})))  \leq &
\sqrt{\lambda} \left(  C_9 \sum_{k=1}^n \rme^{-C_8(n-k)} \left[(1-a \lambda)^{(k-1)T}
\hat{C}^{0}(\lambda,\theta_0) + \hat{C}^1_k(\bx,\lambda)\right]\right) \\ + &
\sqrt{\lambda}  \left\{(1-a \lambda)^{nT} \hat{C}^{0}(\lambda,\theta_0) +
\hat{C}^1_{t,T}(\bx,\lambda) \right\}  \\  \le &
\sqrt{\lambda} \rme^{-\min(C_8,a) n}(n+1) C^{\sharp} \E[ |\theta_0|^2(1+|\theta_0|^2)]  + \sqrt{\lambda} C^{\flat}(\mathbf{x},n,\lambda),
\end{align*}
where
\begin{equation*}
 C^{\sharp}:=  (C_9+1) \left(\lambda_{\mathrm{max}} \beta K_1^4/(4a)+12\right),
\end{equation*}
and
\begin{equation}
\label{eq:definition-C23_1}
 C^{\flat}(\mathbf{x},n,\lambda):=  C_9 \sum_{k=1}^n  e^{-C_8(n-k)}\hat{C}^1_k(\bx,\lambda) +
 \hat{C}^1_{t,T}(\bx,\lambda).
\end{equation}
Notice that $\E[|\theta_{0}|^{2}]\leq{}
\E[|\theta_{0}|^{4}]+1$. Furthermore, for each $y\geq 0$ and $\alpha>0$, $e^{-\alpha y}(y+1)\leq 1+1/\alpha$.
Applying this latter observation with $\alpha=\min(C_8,a)/2$ and $y=n$, it follows that
\begin{eqnarray*}
& & \rme^{-\min(C_8,a) n}(n+1) C^{\sharp} \E[ |\theta_0|^2(1+|\theta_0|^2)]  + C^{\flat}(\mathbf{x},n,\lambda)\\
&\leq& \rme^{-\min(C_8,a) n/2}\left[1+\frac{2}{\min(C_{8},a)}\right]C^{\sharp}(2\E[|\theta_{0}|^{4}]+1)+C^{\flat}(\mathbf{x},n,\lambda)
\end{eqnarray*}
so we can set
\begin{equation}\label{eq:definition-C23_0}
C_{17}:=2\left[1+\frac{2}{\min(C_{8},a)}\right]C^{\sharp},
\end{equation}
and
\begin{equation}\label{eq:definition-C23_1a}
C_{18}(\mathbf{x},n,\lambda):=
C^{\flat}(\mathbf{x},n,\lambda)+\left[1+\frac{2}{\min(C_{8},a)}\right]C^{\sharp}.
\end{equation}
\end{proof}


Recall that $\mathcal{P}(\rset^{q})$ is the set of
	probability measures on $\mathcal{B}(\rset^{q})$ equipped with topology of weak convergence.
	It is known  that $\mathcal{P}(\rset^q)$ can be equipped with the structure
	of a complete separable metric space such that the generated topology coincides with  the topology of weak convergence.
Let us denote by $\Rset:=(\rset^m)^{\nset}$ and by $\Rsigma$ the Borel $\sigma$-algebra associated to the product topology
on $\Rset$.

\begin{lemma}\label{far} Let \Cref{assum:lip} and \Cref{imit} be in force.
The mappings $\tilde{\mu}:\mathbf{x}\to \mathcal{L}(\tilde{Y}_t^{\lambda}(\mathbf{x}))$ and
$\mu:\mathbf{x}\to \mathcal{L}({Y}_t^{\lambda}(\mathbf{x}))$
$\Rsigma/\mathcal{B}(\mathcal{P}(\rset^d))$-measurable for all $0<\lambda$.
\end{lemma}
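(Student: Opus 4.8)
The plan is to prove the stronger statement that, for every fixed $t$ and $\lambda$, the maps $\mathbf{x}\mapsto \tilde{Y}_t^{\lambda}(\mathbf{x})(\omega)$ and $\mathbf{x}\mapsto Y_t^{\lambda}(\mathbf{x})(\omega)$ are \emph{continuous} on $\Rset$ (endowed with the product topology) for $\P$-almost every $\omega$, and then to deduce the asserted measurability of the induced laws by a routine argument. First I would observe that, for fixed $t$, both $\tilde{Y}_t^{\lambda}(\mathbf{x})$ and $Y_t^{\lambda}(\mathbf{x})$ depend on $\mathbf{x}=(x_0,x_1,\dots)$ only through the finitely many coordinates $x_0,\dots,x_{\lfloor t\rfloor}$, since the dynamics \eqref{mah} and \eqref{mahh} on $[0,t]$ involve only $x_{\lfloor s\rfloor}$ for $s\le t$, while $(\tilde{B}^{\lambda}_s)_{s\le t}$ and $\theta_0$ do not depend on $\mathbf{x}$. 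As $\Rset$ is metrizable and a sequence converges in it iff it converges coordinatewise, it therefore suffices to establish pathwise joint continuity in $(x_0,\dots,x_{\lfloor t\rfloor})\in(\rset^m)^{\lfloor t\rfloor+1}$.

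For $\tilde{Y}$, I would fix $\omega$ in the full-measure set on which $s\mapsto \tilde{B}^{\lambda}_s(\omega)$ is continuous and subtract the additive noise: $Z_s(\mathbf{x}):=\tilde{Y}_s^{\lambda}(\mathbf{x})-\{2\lambda\beta^{-1}\}^{1/2}\tilde{B}^{\lambda}_s$ solves the deterministic integral equation
\[
Z_s(\mathbf{x})=\theta_0-\lambda\int_0^s H\bigl(Z_u(\mathbf{x})+\{2\lambda\beta^{-1}\}^{1/2}\tilde{B}^{\lambda}_u,\,x_{\lfloor u\rfloor}\bigr)\,\rmd u ,
\]
whose integrand is globally Lipschitz in the $Z$-variable with constant $\lambda K_1$ by \eqref{liipa}; hence this equation has a unique solution, and using \Cref{assum:lip} together with Gr\"onwall's lemma one obtains, for any $\mathbf{x},\mathbf{x}'$,
\[
\sup_{s\le t}\bigl|\tilde{Y}_s^{\lambda}(\mathbf{x})-\tilde{Y}_s^{\lambda}(\mathbf{x}')\bigr|
\le \lambda K_2\,\rme^{\lambda K_1 t}\sum_{j=0}^{\lfloor t\rfloor}|x_j-x_j'| ,
\]
which gives the desired (indeed Lipschitz) continuity, uniformly on one common $\P$-null-set complement. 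For $Y$ the situation is even simpler: by \eqref{mahh}, $Y^{\lambda}_{n+1}(\mathbf{x})=Y^{\lambda}_n(\mathbf{x})-\lambda H(Y^{\lambda}_n(\mathbf{x}),x_n)+\{2\lambda\beta^{-1}\}^{1/2}(\tilde{B}^{\lambda}_{n+1}-\tilde{B}^{\lambda}_n)$ and, for $t\in(n,n+1]$, $Y^{\lambda}_t(\mathbf{x})=Y^{\lambda}_n(\mathbf{x})-\lambda H(Y^{\lambda}_n(\mathbf{x}),x_n)(t-n)+\{2\lambda\beta^{-1}\}^{1/2}(\tilde{B}^{\lambda}_t-\tilde{B}^{\lambda}_n)$; since $H$ is continuous by \Cref{assum:lip} and $Y^{\lambda}_0(\mathbf{x})=\theta_0$ does not depend on $\mathbf{x}$, an immediate induction shows that $\mathbf{x}\mapsto Y^{\lambda}_t(\mathbf{x})(\omega)$ is continuous for every $\omega$.

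To conclude, let $\mathbf{x}^{(k)}\to\mathbf{x}$ in $\Rset$. By the above, $\tilde{Y}_t^{\lambda}(\mathbf{x}^{(k)})\to\tilde{Y}_t^{\lambda}(\mathbf{x})$ $\P$-almost surely, hence in distribution, i.e.\ $\mathcal{L}(\tilde{Y}_t^{\lambda}(\mathbf{x}^{(k)}))\to\mathcal{L}(\tilde{Y}_t^{\lambda}(\mathbf{x}))$ in $\mathcal{P}(\rset^d)$ with the weak topology; thus $\tilde{\mu}$ is continuous, and the same argument applies to $\mu$. Since both $\Rset$ and $\mathcal{P}(\rset^d)$ are metrizable (the latter being a Polish space whose topology coincides with the topology of weak convergence, as recalled above), continuity implies $\Rsigma/\mathcal{B}(\mathcal{P}(\rset^d))$-measurability, which is the assertion. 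There is no serious obstacle here; the only two points that need care are (i) producing the solution family $(\tilde{Y}^{\lambda}_{\cdot}(\mathbf{x}))_{\mathbf{x}}$ on a single $\P$-null-set complement — handled by the additive-noise reduction to a deterministic Gr\"onwall argument — and (ii) the elementary remark that, for fixed $t$, only the first $\lfloor t\rfloor+1$ data coordinates enter, so that coordinatewise convergence in the product topology is enough.
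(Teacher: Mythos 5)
Your treatment of $\mu$ (the Euler scheme) is essentially the paper's: an induction over the intervals $(j,j+1]$ using the explicit recursion and the continuity of $H$, yielding almost sure, hence weak, convergence along any convergent sequence $\mathbf{x}^{(k)}\to\mathbf{x}$. For $\tilde{\mu}$, however, you take a genuinely different route. The paper does \emph{not} prove continuity of $\tilde{\mu}$: it introduces the Picard iterates $D^k_s(\mathbf{x})$, shows by induction on $k$ that each functional $\Phi_k(\mathbf{x})=\mathcal{L}(D^k_t(\mathbf{x}))$ is continuous (via $\sup_{0\le s\le t}\E|D^k_s(\mathbf{x}^n)-D^k_s(\mathbf{x})|\to 0$), and then concludes that $\tilde{\mu}$ is measurable as a pointwise limit of continuous maps, using the $L^2$-convergence $D^k_t(\mathbf{x})\to\tilde{Y}^{\lambda}_t(\mathbf{x})$. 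You instead exploit the additive structure of the noise: subtracting $\{2\lambda\beta^{-1}\}^{1/2}\tilde{B}^{\lambda}$ reduces \eqref{mah} to a pathwise deterministic integral equation whose right-hand side is globally Lipschitz in the state variable by \eqref{liipa}, and a Gr\"onwall estimate gives the explicit pathwise Lipschitz bound $\sup_{s\le t}|\tilde{Y}^{\lambda}_s(\mathbf{x})-\tilde{Y}^{\lambda}_s(\mathbf{x}')|\le \lambda K_2\rme^{\lambda K_1 t}\sum_{j=0}^{\lfloor t\rfloor}|x_j-x_j'|$ on a single null-set complement. This buys the stronger conclusion that $\tilde{\mu}$ is actually continuous, not merely measurable, and is arguably cleaner; the price is the (correctly flagged and standard, but nontrivial) identification of the pathwise solution of the random integral equation with the strong solution of the SDE on one common full-measure set, which the paper's Picard route sidesteps by working with expectations throughout. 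Both arguments are correct and both suffice for the lemma; your observation that only the coordinates $x_0,\dots,x_{\lfloor t\rfloor}$ enter is sound and consistent with the coordinatewise characterization of convergence in the product topology that the paper also uses.
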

\begin{proof} Recall that if a sequence $\mathbf{x}^n\in\Rset$ converges to $\mathbf{x}\in\Rset$ in the
product topology, $n\to\infty$ then $\mathbf{x}^n_i\to\mathbf{x}_i$ for each coordinate $i\in\nset$.
We show below, by induction on $j\in \nset$ that
\begin{equation}\label{tritial}
{Y}_t^{\lambda}(\mathbf{x}^n)\to {Y}_t^{\lambda}(\mathbf{x})
\end{equation}
for all $t\in (j,j+1]$ almost surely, $n\to\infty$.
Note that \eqref{tritial} is trivial for $t=0$.

Now notice that
$$
{Y}_t^{\lambda}(\mathbf{x}^n)=\lambda(t-j)H({Y}_j^{\lambda}(\mathbf{x}^n),\mathbf{x}^n_j)+\sqrt{2\lambda}[\tilde{B}^{\lambda}_t-\tilde{B}^{\lambda}_j],
$$
so this tends a.s.\ to ${Y}_t^{\lambda}(\mathbf{x})$ as $n\to\infty$, by continuity of $H(\cdot,\cdot)$ and by the induction hypothesis.
Since almost sure convergence entails convergence in law,
this shows that $\mu$ is, in fact, a continuous functional of $\mathbf{x}$.

Now we turn our attention to $\tilde{\mu}$. For each $\mathbf{x}\in \Rset$, we define a recursive (Picard-type) iteration:
$$
D^0_s(\mathbf{x}):=\theta_0,\ 0\leq s\leq t,\ D^{k+1}_s(\mathbf{x}):=\theta_0+\lambda\int_0^s H(D^k_u(\mathbf{x}),\mathbf{x}_{\lfloor u\rfloor})\, du+
\sqrt{2\lambda}\tilde{B}^{\lambda}_s,\ k\in\nset.
$$
Define $\Phi_k(\mathbf{x}):=\mathcal{L}(D_t^k(\mathbf{x}))$, $\mathbf{x}\in \Rset$, $k\in\nset$.

We now establish for each $k\in\nset$ that, when $\mathbf{x}^n\to\mathbf{x}$, $n\to\infty$,
we have $D^{k}_s(\mathbf{x}^n)\to D^{k}_s(\mathbf{x})$ in $L^1$ (hence also in law). We  check by induction on $k$ that
$$
\sup_{0\leq s\leq t}\E|D^{k}_s(\mathbf{x}^n)-D^{k}_s(\mathbf{x})|\to 0,
$$
which is slightly more (but it is needed for the induction to work). The case $k=0$ is trivial.
Otherwise, using Lipschitz-continuity of $H(\cdot,\cdot)$, for any $s \in \ccint{0,T}$,
\begin{align*}
& \E|D^{k+1}_s(\mathbf{x}^n)-D^{k+1}_s(\mathbf{x})|\\
&\leq \lambda \int_0^s \E|H(D^k_u(\mathbf{x}^n),\mathbf{x}^n_{\lfloor u\rfloor})-H(D^k_u(\mathbf{x}),\mathbf{x}_{\lfloor u\rfloor})|\, \rmd u\\
&\leq \lambda \int_0^s \left\{ \E|H(D^k_u(\mathbf{x}^n),\mathbf{x}^n_{\lfloor u\rfloor})-H(D^k_u(\mathbf{x}),\mathbf{x}^n_{\lfloor u\rfloor})|
+\E|H(D^k_u(\mathbf{x}),\mathbf{x}^n_{\lfloor u\rfloor})-H(D^k_u(\mathbf{x}),\mathbf{x}_{\lfloor u\rfloor})| \right\}\, \rmd u\\
&\leq \lambda\int_0^t \left\{ K_1 \E|D^k_u(\mathbf{x}^n)-D^k_u(\mathbf{x})|+ K_2 \max_{0\leq i\leq \lfloor t\rfloor}|\mathbf{x}^n_i-\mathbf{x}^n_i|\right\} \, \rmd u.
\end{align*}
It follows that
\begin{equation*}
\sup_{0\leq s\leq t}\E|D^{k+1}_s(\mathbf{x}^n)-D^{k+1}_s(\mathbf{x})| \leq \lambda t \left\{ K_1 \sup_{0\leq s\leq t}\E|D^k_s(\mathbf{x}^n)-D^k_s(\mathbf{x})|+ K_2 \max_{0\leq i\leq \lfloor t\rfloor}|\mathbf{x}^n_i-\mathbf{x}^n_i|\right\},
\end{equation*}
which tends to $0$ as $n\to\infty$ by the induction hypothesis and the definition of the convergence in $\Rset$.
We deduce that, for each $k$, the functional $\Phi_k:\mathcal{R}\to\mathcal{P}$ is continuous on $\mathcal{R}$.

Noting $\theta_0\in L^2$, it is well-known (see e.g.\  \cite[Theorem~6.2.2]{arnold:1974}) that $D^k_t(\mathbf{x})\to \tilde{Y}^{\lambda}_t(\mathbf{x})$, $k\to\infty$
in $L^2$. This implies $\Phi_k(\mathbf{x})\to \mathcal{L}(\tilde{Y}^{\lambda}_t(\mathbf{x}))$ in law, for each $\mathbf{x}\in\mathcal{R}$, which shows that the functional $\tilde{\mu}$ is
measurable, being a pointwise limit of continuous functionals.
The proof is complete.
\end{proof}

\begin{lemma}\label{lagel}
Let $(\Uset,\Usigma)$ be a measurable space and let the mappings $\mu:\Uset\to\mathcal{P}(\rset^d)$,
$\tilde{\mu}:\Uset\to\mathcal{P}(\rset^d)$ be $\Usigma/\mathcal{B}(\mathcal{P}(\rset^d))$-measurable. Let $\zeta$ be a probability
law on $\Usigma$.
If $W_1(\tilde{\mu}(u),\mu(u))\leq \kappa(u)$ holds for every $u\in\Uset$
where $\kappa:\Uset\to [0,1]$ is a measurable function then
\[
W_1\left(\int_{\Uset}\tilde{\mu}(u)\, \zeta(\rmd u),\int_{\Uset}{\mu}(u)\, \zeta(\rmd u)\right)\leq \int_{\Uset}\kappa(u)\, \zeta(\rmd u).
\]
\end{lemma}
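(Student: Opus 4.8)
The plan is to build an explicit coupling of the two mixture measures $\overline{\tilde\mu}:=\int_{\Uset}\tilde\mu(u)\,\zeta(\rmd u)$ and $\overline\mu:=\int_{\Uset}\mu(u)\,\zeta(\rmd u)$ (well-defined elements of $\mathcal{P}(\rset^d)$, since $u\mapsto\tilde\mu(u)(A)$, $u\mapsto\mu(u)(A)$ are measurable for each Borel $A$ by measurability of $\tilde\mu,\mu$ into $\mathcal{P}(\rset^d)$), obtained by gluing together, over $u\in\Uset$, optimal $W_1$-couplings of $\tilde\mu(u)$ and $\mu(u)$. The central object is the set-valued map
\[
\Gamma(u):=\Bigl\{\pi\in\mathcal{C}(\tilde\mu(u),\mu(u))\ :\ \int_{\rset^d}\int_{\rset^d}|\theta-\theta'|\,\pi(\rmd\theta\,\rmd\theta')\le\kappa(u)\Bigr\}.
\]
First I would observe that for fixed $u$ the set $\mathcal{C}(\tilde\mu(u),\mu(u))$ is weakly compact (its elements are tight, the marginals being prescribed, so Prokhorov applies) and that $\pi\mapsto\int|\theta-\theta'|\,\pi(\rmd\theta\,\rmd\theta')$ is weakly lower semicontinuous (being the supremum over $n$ of the weakly continuous maps $\pi\mapsto\int(n\wedge|\theta-\theta'|)\,\rmd\pi$). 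Since $\inf_{\pi\in\mathcal{C}(\tilde\mu(u),\mu(u))}\int|\theta-\theta'|\,\rmd\pi=W_1(\tilde\mu(u),\mu(u))\le\kappa(u)$, a weak limit point of a minimizing sequence lies in $\Gamma(u)$, so $\Gamma(u)\neq\emptyset$; lower semicontinuity also makes $\Gamma(u)$ weakly closed, hence compact.

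Next I would check that the graph $\{(u,\pi):\pi\in\Gamma(u)\}$ lies in $\Usigma\otimes\mathcal{B}(\mathcal{P}(\rset^{2d}))$. Fixing a countable family $(h_k)_k$ of bounded continuous functions on $\rset^d$ determining weak convergence, the condition $\pi\in\mathcal{C}(\tilde\mu(u),\mu(u))$ is the conjunction over $k$ of $\int h_k(\theta)\,\pi(\rmd\theta\,\rmd\theta')=\int h_k\,\rmd\tilde\mu(u)$ and $\int h_k(\theta')\,\pi(\rmd\theta\,\rmd\theta')=\int h_k\,\rmd\mu(u)$; each side is jointly measurable in $(u,\pi)$ because $\pi\mapsto\int h_k(\cdot)\,\rmd\pi$ is weakly continuous and $u\mapsto\int h_k\,\rmd\tilde\mu(u)$, $u\mapsto\int h_k\,\rmd\mu(u)$ are measurable. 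The constraint $\int|\theta-\theta'|\,\rmd\pi\le\kappa(u)$ is jointly measurable since $\pi\mapsto\int|\theta-\theta'|\,\rmd\pi$ is Borel (lower semicontinuous) and $\kappa$ is measurable. Hence, by a standard measurable selection theorem for measurable closed-valued multifunctions into a Polish space — the Kuratowski--Ryll-Nardzewski theorem, applied after completing $\Usigma$ with respect to $\zeta$ so as to absorb the usual universal-measurability caveat (harmless, since everything below is integrated against $\zeta$) — there is a map $u\mapsto\zeta_u\in\mathcal{P}(\rset^{2d})$, measurable for the $\zeta$-completion of $\Usigma$, with $\zeta_u\in\Gamma(u)$ for $\zeta$-a.e.\ $u$; on the remaining $\zeta$-null set I would set $\zeta_u$ to a fixed measure. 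I then define $\Pi(C):=\int_{\Uset}\zeta_u(C)\,\zeta(\rmd u)$ for $C\in\mathcal{B}(\rset^{2d})$, which is a probability measure (countable additivity by monotone convergence), and because $\zeta_u\in\mathcal{C}(\tilde\mu(u),\mu(u))$ for $\zeta$-a.e.\ $u$ one gets $\Pi(A\times\rset^d)=\int_{\Uset}\tilde\mu(u)(A)\,\zeta(\rmd u)=\overline{\tilde\mu}(A)$ and $\Pi(\rset^d\times B)=\overline\mu(B)$, i.e.\ $\Pi\in\mathcal{C}(\overline{\tilde\mu},\overline\mu)$.

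To conclude, by the monotone class theorem $\int g\,\rmd\Pi=\int_{\Uset}\bigl(\int g\,\rmd\zeta_u\bigr)\zeta(\rmd u)$ for every nonnegative Borel $g$ on $\rset^{2d}$, so from the definition \eqref{eq:definition-W-1} of $W_1$ (case $p=1$) and $\zeta_u\in\Gamma(u)$ $\zeta$-a.e.,
\[
W_1(\overline{\tilde\mu},\overline\mu)\le\int_{\rset^d}\int_{\rset^d}|\theta-\theta'|\,\Pi(\rmd\theta\,\rmd\theta')=\int_{\Uset}\Bigl(\int_{\rset^d}\int_{\rset^d}|\theta-\theta'|\,\zeta_u(\rmd\theta\,\rmd\theta')\Bigr)\zeta(\rmd u)\le\int_{\Uset}\kappa(u)\,\zeta(\rmd u),
\]
which is the assertion. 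I expect the main obstacle to be the measurable selection step: one must verify that $u\mapsto\Gamma(u)$ is a closed-valued multifunction with measurable graph so that a selection theorem applies, the fussy parts being the joint measurability of the marginal constraints (handled via the countable determining class $(h_k)_k$) and the standard universal-measurability subtlety of selections (handled by passing to the $\zeta$-completion of $\Usigma$). Everything else — the Prokhorov compactness, the lower semicontinuity of the transport cost functional, and the interchange of integrals — is routine.
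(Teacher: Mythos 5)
Your argument is correct and follows essentially the same route as the paper: measurably select, for each $u$, a coupling of $\tilde{\mu}(u)$ and $\mu(u)$ with transport cost at most $\kappa(u)$, mix these couplings over $\zeta$, and verify that the mixture is a coupling of the two mixture measures, whence the bound follows from the definition of $W_1$ as an infimum. The only difference is that the paper obtains the measurable selection of optimal plans directly from \cite[Corollary~5.22]{VillaniTransport}, whereas you reprove that selection step by hand via a Kuratowski--Ryll-Nardzewski/graph-measurability argument.
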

\begin{proof}
By \cite[Corollary~5.22]{VillaniTransport}, there is a measurable choice $u \to \pi(u)$ such that for
each $u$, $\pi(u)$ is a $W_1$-optimal transference plan between $\mu(u)$ and $\tilde{\mu}(u)$. For any $A \in \rset^d$,
$\int_{\Uset} \zeta(\rmd u) \pi(u)(A \times \rset^d) = \int_{\rset^d} \zeta(\rmd u) \mu(u)(A) $ and
$\int_{\Uset} \zeta(\rmd u) \pi(u)(\rset^d \times A) = \int_{\rset^d} \zeta(\rmd u) \tilde{\mu}(u)(A) $. Therefore
\begin{align*}
W_1\left(\int_{\Uset}\tilde{\mu}(u)\, \zeta(\rmd u),\int_{\Uset}{\mu}(u)\, \zeta(\rmd u)\right)
\leq  \int_{\Uset} \zeta(\rmd u)  \int_{\rset^{2d}} \pi(u) (\rmd x \rmd y) |x-y|.
\end{align*}
The proof follows since $\int_{\rset^{2d}} \pi(u) (\rmd x \rmd y) |x-y|  = W_1(\mu(u),\tilde{\mu}(u)) \leq \kappa(u)$.
\end{proof}

\begin{corollary}\label{crux}
For each $0<\lambda\leq \lambda_{\max}$ and  $t\in (nT,(n+1)T]$, we get
\[
W_1(\mathcal{L}(\tilde{Y}^{\lambda}_t(\mathbf{X})),
\mathcal{L}(Y^{\lambda}_t(\mathbf{X})))\leq
\lambda^{1/2}[\rme^{-\min(C_8,a) n/2} C_{17}E[|\theta_{0}|^{4}]+C_{19}],
\]
where $C_{19}:=\sup_{\lambda\leq\lambda_{\mathrm{max}}}\sup_{n\in\nset}E[C_{18}(\mathbf{X},n,\lambda)]<\infty$.
\end{corollary}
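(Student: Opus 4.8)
The plan is to reduce the statement, which concerns the random data stream $\mathbf{X}$, to the frozen-data estimate of Lemma~\ref{kl} by disintegrating with respect to $\mathbf{X}$, and then to control the data-dependent constant $C_{18}$ in expectation. First I would observe that $\mathbf{X}=(X_0,X_1,\dots)$ is $\mathcal{G}_{\infty}$-measurable (each $X_n$ being $\mathcal{G}_n$-measurable), while $\theta_0$ and the Brownian motion $B$, hence also the time-changed Brownian motion $(\tilde{B}^{\lambda}_s)_{s\ge 0}$, are independent of $\mathcal{G}_{\infty}$. For each fixed deterministic $\mathbf{x}\in(\rset^m)^{\nset}$, the random variables $\tilde{Y}^{\lambda}_{t}(\mathbf{x})$ and $Y^{\lambda}_{t}(\mathbf{x})$ are, by construction (the strong solution of \eqref{mah} and the explicit Euler recursion \eqref{mahh}, respectively), measurable functionals of $(\theta_0,\tilde{B}^{\lambda})$ alone. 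Since $(\theta_0,\tilde{B}^{\lambda})$ is independent of $\mathbf{X}$, the standard freezing/disintegration argument gives
\[
\mathcal{L}(\tilde{Y}^{\lambda}_{t}(\mathbf{X}))=\int_{\Rset}\mathcal{L}(\tilde{Y}^{\lambda}_{t}(\mathbf{x}))\,\P_{\mathbf{X}}(\rmd\mathbf{x}),
\qquad
\mathcal{L}(Y^{\lambda}_{t}(\mathbf{X}))=\int_{\Rset}\mathcal{L}(Y^{\lambda}_{t}(\mathbf{x}))\,\P_{\mathbf{X}}(\rmd\mathbf{x}),
\]
where $\P_{\mathbf{X}}$ denotes the law of $\mathbf{X}$ on $(\Rset,\Rsigma)$; the two families of laws are $\Rsigma/\mathcal{B}(\mathcal{P}(\rset^d))$-measurable in $\mathbf{x}$ by Lemma~\ref{far}.

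I would then apply Lemma~\ref{lagel} with $\Uset=\Rset$, $\zeta=\P_{\mathbf{X}}$, $\tilde{\mu}(\mathbf{x})=\mathcal{L}(\tilde{Y}^{\lambda}_{t}(\mathbf{x}))$ and $\mu(\mathbf{x})=\mathcal{L}(Y^{\lambda}_{t}(\mathbf{x}))$, together with the bound furnished by Lemma~\ref{kl},
\[
W_1(\mathcal{L}(\tilde{Y}^{\lambda}_{t}(\mathbf{x})),\mathcal{L}(Y^{\lambda}_{t}(\mathbf{x})))\le\kappa(\mathbf{x}):=\lambda^{1/2}\rme^{-\min(C_8,a)n/2}C_{17}\E[|\theta_0|^4]+\lambda^{1/2}C_{18}(\mathbf{x},n,\lambda),
\]
with $\kappa$ measurable in $\mathbf{x}$. (The normalisation $\kappa\le 1$ in the statement of Lemma~\ref{lagel} is immaterial here: its proof merely selects a $W_1$-optimal transference plan $\pi(\mathbf{x})$ measurably and averages it against $\P_{\mathbf{X}}$, which is legitimate for any measurable $\kappa\ge 0$ with $\int\kappa\,\P_{\mathbf{X}}<\infty$ — a finiteness secured in the last step below.) This yields
\[
W_1(\mathcal{L}(\tilde{Y}^{\lambda}_{t}(\mathbf{X})),\mathcal{L}(Y^{\lambda}_{t}(\mathbf{X})))\le\int_{\Rset}\kappa(\mathbf{x})\,\P_{\mathbf{X}}(\rmd\mathbf{x})=\lambda^{1/2}\rme^{-\min(C_8,a)n/2}C_{17}\E[|\theta_0|^4]+\lambda^{1/2}\E[C_{18}(\mathbf{X},n,\lambda)].
\]

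It would then remain to check that $C_{19}:=\sup_{\lambda\le\lambda_{\max}}\sup_{n\in\nset}\E[C_{18}(\mathbf{X},n,\lambda)]<\infty$; this is the only computational point and is routine bookkeeping. Unwinding $C_{18}$ through $C^{\flat}$, $\hat{C}^1_k$, $\hat{C}^1_{t,T}$ and then $\bar{C}^1_k$, $D_k$, $D_{t,T}$ (see \eqref{eq:definition-C23_1a}, \eqref{eq:definition-C23_1}, \eqref{eq:definition-hat-C-k}, \eqref{eq:definition-bar-C-k}, \eqref{D_k}), every $\mathbf{x}$-dependent contribution is, up to fixed multiplicative constants, of one of the forms $\lambda^{2}\sum_{j}|x_j|^{2}$, $\lambda^{3}\sum_{j}\sum_{l\ge 0}(1-a\lambda)^{l}|x_{j-1-l}|^{2}$ or $\lambda\sum_{j}(1-a\lambda)^{j-1}|x_{\cdot}|^{4}$, where the inner $j$-sums run over at most $T$ or $nT$ consecutive indices, possibly further averaged with the weights $\sum_{k=1}^{n}\rme^{-C_8(n-k)}$. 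Taking expectations and using that $(X_n)_{n\in\nset}$, being conditionally $L$-mixing, is $L^{4}$-bounded, so that $\sup_j\E[|X_j|^{4}]<\infty$ (hence also $\sup_j\E[|X_j|^{2}]<\infty$), together with the elementary estimates $\lambda\sum_{l\ge 0}(1-a\lambda)^{l}\le 1/a$, $\lambda T\le 1$ and $\sum_{k=1}^{n}\rme^{-C_8(n-k)}\le(1-\rme^{-C_8})^{-1}$, one bounds each such term by a constant that does not depend on $n$ or on $\lambda\in(0,\lambda_{\max}]$. Hence $C_{19}<\infty$, and substituting $\E[C_{18}(\mathbf{X},n,\lambda)]\le C_{19}$ into the previous display gives the claim. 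I expect no genuine obstacle in this argument: the only delicate ingredient is the passage from the fixed $\mathbf{x}$ of Lemma~\ref{kl} to the random $\mathbf{X}$, which is exactly what the measurability statement of Lemma~\ref{far} and the averaging statement of Lemma~\ref{lagel} were set up to bridge, and everything else is moment bookkeeping via the $L^{4}$-boundedness of the data.
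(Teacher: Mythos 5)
Your proposal is correct and follows essentially the same route as the paper: both reduce to the frozen-data bound of Lemma~\ref{kl} via the measurability of $\mathbf{x}\mapsto\mathcal{L}(\tilde Y^{\lambda}_t(\mathbf{x}))$, $\mathbf{x}\mapsto\mathcal{L}(Y^{\lambda}_t(\mathbf{x}))$ (Lemma~\ref{far}) and the averaging Lemma~\ref{lagel}, and then verify $C_{19}<\infty$ by the same moment bookkeeping using the $L^4$-boundedness of the conditionally $L$-mixing data. Your explicit remark that the $[0,1]$ normalisation of $\kappa$ in Lemma~\ref{lagel} is inessential is a small but welcome clarification of a point the paper leaves implicit.
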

\begin{proof} Recall first that as $X$ is conditionally $L$-mixing,
$A:=1+\sup_{n\in\nset}\E[|X_n|^4]<\infty$. Fix $n$ such that $n <t\leq n+1$.
Denote by $\zeta$  the law of $\mathbf{X}$.
Define
\[
\tilde{\mu}(\mathbf{x}):=\mathcal{L}(\tilde{Y}_t^{\lambda}(\mathbf{x})),\quad
\mu(\mathbf{x}):=\mathcal{L}({Y}_t^{\lambda}(\mathbf{x})).
\]
Lemma \ref{far} implies the measurability of these functionals. Let
\[
\kappa(\mathbf{x},t):=\lambda^{1/2}(\rme^{-\min(C_8,a) n/2} C_{17} \E[ |\theta_0|^4]+  C_{18}(\mathbf{x},n,\lambda)),
\]
for each $\mathbf{x}\in\mathcal{R}$, where $C_{18}(\mathbf{x},n,\lambda)$ is given in \eqref{eq:definition-C23_1a}.
Now the statement follows by Lemma \ref{lagel} provided that we show
$C_{19}< \infty$.
By the definitions of $\hat{C}^1_k(\bx,\lambda)$ and $\hat{C}^{1}_{t,T}(\bx,\lambda)$ this boils down to showing
that $\sup_{\lambda\leq\lambda_{\mathrm{max}}}\sup_{k}\E[S_{1}(\lambda,k)+S_{2}(\lambda,k)]<\infty$, where
\begin{eqnarray*}
S_{1}(\lambda,k)&=&\lambda^3 \sum_{j=(k-1)T}^{kT-1}  \sum_{l=0}^{j}(1-a\lambda)^l E|X_{j-l}|^2 +
\lambda^2\sum_{l=(k-1)T}^{kT-1} E|X_l|^2\\
S_{2}(\lambda,k) &=& \lambda \sum_{j=0}^{kT} \left(1-a\lambda\right)^jE|X_{(k-1)T-j}|^{4}.
\end{eqnarray*}
This is clear since
\begin{equation*}
\E[S_{1}(\lambda,k)]\leq \lambda^{3}\frac{A}{a\lambda}\frac{1}{\lambda}+\lambda^{2}\frac{A}{\lambda}\leq A\lambda_{\mathrm{max}}
\left(1+\frac{1}{a}\right),
\end{equation*}
and
\begin{equation*}
\E[S_{2}(\lambda,k)]\leq \lambda \frac{A}{a\lambda}\leq \frac{A}{a}.
\end{equation*}
\end{proof}

\begin{lemma} \label{contractionconst} The contraction constant in Proposition \ref{prop:contra} is given by
$$
C_{8}=\min\{\bar{\phi}, C_6(p), 4C_7(p)\epsilon C_6(p)\}/2,
$$
where the explicit expressions for $C_6(p)$ and $C_7(p)$ can be found in Lemma~\ref{lyapp} and  $\phi$ is given by
\[
\bar{\phi}= \left(\sqrt{\fraca{4\pi}{K_1}} \bar{b}  \exp\left(\left(\bar{b} \, \fraca{\sqrt{K_1}}{2} +\fraca{2}{\sqrt{K_1}}\right)^2\right) \right)^{-1} \,.
\]
Furthermore, any $\epsilon$ can be chosen which satisfies the following inequality
\[
\epsilon  \leq 1 \wedge \left(8C_7(p) \sqrt{\fraca{\pi}{K_1}}\int_0^{\tilde{b}}\exp\left(\left(s\fraca{\sqrt{K_1}}{2}+\fraca{2}{\sqrt{K_1}}\right)^2\right) \,\rmd s\right)^{-1},
\]
where $\tilde{b}=\sqrt{2C_7(p)/C_6(p)-1}$, $\bar{b} = \sqrt{4C_7(p)(1+C_6(p))/C_6(p)-1}$.
The constant $C_9$ is given as the ratio $C_{11}/C_{10}$, where $C_{11},\,C_{10}$ are given explicitly in the proof below.
\end{lemma}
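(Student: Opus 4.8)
The plan is to specialize the explicit construction of \cite[Section~5]{eberle:guillin:zimmer:Trans:2019} to the drift $-h$ and the diffusion coefficient $\sqrt{2\beta^{-1}}$ of \eqref{kako}, and simply to read off the resulting numerical constants. By the verification already carried out in the proof of Proposition~\ref{prop:contra}, \cite[Assumptions~2.1, 2.2, 2.5]{eberle:guillin:zimmer:Trans:2019} hold with contractivity function $\kappa\equiv K_1$ (coming from \eqref{mulyan}), with Lyapunov function $V=V_2$, and with drift constants $C_6(2)=a/2$ and $C_7(2)$ as in Lemma~\ref{lyapp}; the non-unit diffusion coefficient only rescales the numerical factors in the integrals below. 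First I would recall the auxiliary objects of that reference: a function $\varphi$ which, since $\kappa$ is constant here, is the exponential of a negative quadratic in $s$; its antiderivative $\Phi(s)=\int_0^s\varphi(u)\,\rmd u$; and the concave, bounded, non-decreasing function $f(r)=\int_0^r\varphi(s)g(s)\,\rmd s$, where $g$ is a non-increasing cut-off with $g\equiv 1$ near the origin. With $\kappa\equiv K_1$ the integrals defining $\Phi$, $f$, $\sup f$ and $\int\Phi/\varphi$ are all elementary Gaussian-type integrals and hence available in closed form (or with explicit bounds).

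Second, I would pin down the radii governing the cut-off. Since $V_2(\theta)=1+|\theta|^2$, the level sets $\{V_2\le 2C_7(2)/C_6(2)\}$ and $\{V_2\le 4C_7(2)(1+C_6(2))/C_6(2)\}$ are exactly the balls of radii $\tilde b$ and $\bar b$ from the statement; these are the radii at which the Lyapunov drift condition of \cite{eberle:guillin:zimmer:Trans:2019} becomes active. The contraction rate delivered by \cite[Corollary~2.3]{eberle:guillin:zimmer:Trans:2019} is then a minimum of (i) a ``reflection'' rate proportional to $\left(\int_0^{\bar b}\Phi(s)/\varphi(s)\,\rmd s\right)^{-1}$ and (ii) the Lyapunov-driven rates $C_6(2)$ and $4C_7(2)\epsilon C_6(2)$, the whole minimum being halved. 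Estimating $\int_0^{\bar b}\Phi(s)/\varphi(s)\,\rmd s$ by completing the square in the quadratic exponent produces the quantity $\bar\phi$ and hence the stated $C_8$. The admissible values of $\epsilon$ are precisely those $\epsilon\le 1$ for which the cut-off $g$ remains positive up to radius $\bar b$; writing out that positivity requirement reduces it to an explicit Gaussian integral over $[0,\tilde b]$, which is the displayed inequality for $\epsilon$.

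Third, for $C_{10}$ and $C_{11}$ I would compare the integrands of $\mathcal{W}_{\rho_2}$ and of $w_{1,2}$ pointwise. Concavity of $f$ together with $f(0)=0$ and monotonicity give $f(1)\,(1\wedge r)\le f(r)\le\max\{f'(0^+),\sup f\}\,(1\wedge r)$ for all $r\ge 0$, while $\epsilon\le 1$ gives $\epsilon\{1+V_2(\theta)+V_2(\theta')\}\le 1+\epsilon V_2(\theta)+\epsilon V_2(\theta')\le 1+V_2(\theta)+V_2(\theta')$. Multiplying these and taking the infimum over $\zeta\in\mathcal{C}(\mu,\nu)$ (the feasible set is common to both functionals) yields \eqref{lajja} with $C_{10}=\epsilon\,f(1)$ and $C_{11}=\max\{f'(0^+),\sup f\}=\max\{1,\sup f\}$ (using the normalization $f'(0^+)=1$ of \cite{eberle:guillin:zimmer:Trans:2019} and the closed-form bound $\sup f\le\Phi(\infty)$ obtained above), and then $C_9=C_{11}/C_{10}$, as claimed.

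The step I expect to be the main nuisance is bookkeeping the effect of the diffusion coefficient $\sqrt{2\beta^{-1}}\neq 1$: one must confirm that it only changes the numerical factors appearing inside $\varphi$, $\Phi$, $f$ (and therefore inside $\bar\phi$ and the bound on $\epsilon$) without disturbing the monotonicity and positivity requirements of \cite[Section~5]{eberle:guillin:zimmer:Trans:2019}. Once this is settled, everything reduces to substitution into formulas that are already explicit in that reference.
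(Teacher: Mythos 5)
Your proposal follows essentially the same route as the paper: verify the hypotheses of \cite[Theorem~2.2 and Corollary~2.3]{eberle:guillin:zimmer:Trans:2019} with constant $\kappa\equiv K_1$ and $V=V_2$, identify the radii $\tilde b,\bar b$ from the level sets of the Lyapunov function, evaluate the resulting Gaussian-type integrals to get $\bar\phi$ and the admissibility condition on $\epsilon$, and finally sandwich $\mathcal{W}_{\rho_2}$ between multiples of $w_{1,2}$ to extract $C_9=C_{11}/C_{10}$. Two points of divergence are worth noting. First, for the sandwich you invoke concavity of $f$ to get $f(1)(1\wedge r)\le f(r)\le\max\{f'(0^+),\sup f\}(1\wedge r)$, yielding $C_{10}=\epsilon f(1)$ and $C_{11}=\max\{1,\sup f\}$; the paper instead uses the bounds $\tfrac12 F(r)\le f(r)\le F(r)$ for $r\le R_2$ together with $r\exp(-K_1R_2^2/4-pR_2)\le F(r)\le r$, arriving at $C_{10}=\min\{\bar C_{10},\tilde C_{10}\}$ with $\bar C_{10}=\tfrac{\epsilon}{2}\exp(-K_1\overline{R}_2^2/4-p\overline{R}_2)$ and $C_{11}=1+R_2$. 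Both are valid; yours is arguably cleaner but produces different explicit constants, which is harmless since the lemma only promises that $C_{10},C_{11}$ are made explicit in the proof. Second, you omit the computation of the quantity $Q(\epsilon)=\sup|\nabla V_p|/\max\{V_p,1/\epsilon\}$; the bound $Q(\epsilon)\le p/2$ is what produces the additive shift $p/\sqrt{K_1}$ (i.e.\ $2/\sqrt{K_1}$ for $p=2$) inside the squared exponents of $\bar\phi$ and of the $\epsilon$-condition, so it cannot be skipped if the stated formulas are to come out exactly.
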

\begin{proof}
Consider the Lyapunov function $V_p(\theta) = (|\theta|^2+1)^{p/2}$, for any $\theta \in \rset^d$ and $p \geq 2$. Notice that $\nabla V_p(\theta) = p\theta(|\theta|^2+1)^{p/2-1}$.
As in \cite{eberle:guillin:zimmer:Trans:2019}, define a bounded non-decreasing function: $Q(\epsilon): (0,\infty) \rightarrow \mathbb{R}_{+}$ by
\[
Q(\epsilon) = \sup \frac{|\nabla V_p|}{\max\{V_p, 1/\epsilon\}}.
\]
For calculating the constants we need an estimate for $Q(\epsilon)$.{}
We consider the following three cases:
\begin{enumerate}
\item Consider $\epsilon \in (0,2^{-p/2})$. For $|\theta| < \sqrt{(1/\epsilon)^{2/p}-1}$, we have $V_p(\theta) < 1/\epsilon$, and
\[
Q(\epsilon) =\sup_{|\theta| < \sqrt{(1/\epsilon)^{2/p}-1}} \epsilon  p|\theta|(|\theta|^2+1)^{p/2-1} = \epsilon^{2/p} p \sqrt{(1/\epsilon)^{2/p}-1}.
\]
On the other hand, for $|\theta|  \geq \sqrt{(1/\epsilon)^{2/p}-1}$, $V_p(\theta) \geq 1/\epsilon$, and
\[
Q(\epsilon) =\sup_{} \frac{p|\theta|}{|\theta|^2+1} = \epsilon^{2/p} p \sqrt{(1/\epsilon)^{2/p}-1},
\]
since for $\epsilon \in (0,2^{-p/2})$, $|\theta| >1$. Therefore, $Q(\epsilon)=\epsilon^{2/p} p \sqrt{(1/\epsilon)^{2/p}-1}\leq p/2$
for all $\epsilon \in (0,2^{-p/2})$.

\item  For the second case, consider $\epsilon \in (2^{-p/2}, 1)$. Then,
by using the same arguments as above, one obtains for $|\theta| < \sqrt{(1/\epsilon)^{2/p}-1}$,
$Q(\epsilon)  = \epsilon^{2/p} p \sqrt{(1/\epsilon)^{2/p}-1} $, while for $|\theta|  \geq \sqrt{(1/\epsilon)^{2/p}-1}$, $Q(\epsilon)  = p/2$.
Thus, one obtains $Q(\epsilon)\leq  p/2 $ for all $\epsilon \in (2^{-p/2}, 1)$.

\item Finally, for $\epsilon \geq 1$, we have $Q(\epsilon) =  p/2 $, since $V_p(\theta) \geq 1$ for all $\theta \in \rset^d$.
\end{enumerate}

In the first two cases above, we used the fact that $p/2 \geq \epsilon^{2/p} p \sqrt{(1/\epsilon)^{2/p}-1}$ for all $\epsilon \in (0,1)$.
Indeed, this is true since, squaring both sides, we have
\[
1 \geq 4\epsilon^{4/p}((1/\epsilon)^{2/p}-1) \iff 4\epsilon^{4/p} -4\epsilon^{2/p}+1 \geq 0 \iff (2\epsilon^{2/p}-1)^2 \geq 0.
\]
{}
Combining all the three cases, one obtains $Q(\epsilon)  \leq p/2$ for all $\epsilon >0$.

In \cite{eberle:guillin:zimmer:Trans:2019} a further key qunatity is $R_{2}\geq 0$.
We note that, by its definition in Section 2 of \cite{eberle:guillin:zimmer:Trans:2019},
it satisfies
\begin{eqnarray*}
& & R_2\leq 2 \sup \{|\theta|: V_p(\theta) \leq 4C_7(p)(1+C_6(p))/C_6(p) \} \\
&\implies& R_2 \leq
\overline{R}_{2}:=2\sqrt{(4C_7(p)(1+C_6(p))/C_6(p))^{2/p}-1}
\end{eqnarray*}
as well as
$$
R_2 \geq
\underline{R}_{2}:=\sqrt{(4C_7(p)(1+C_6(p))/C_6(p)-1)^{2/p}-1}.
$$

We now check the requirements of \cite[Theorem~2.2]{eberle:guillin:zimmer:Trans:2019} for $\epsilon$. It is required that
\[
(4C_7(p)\epsilon)^{-1} \geq \int_0^{R_1} \int_0^s \exp \left(\frac{1}{2}\int_r^s u \kappa(u)\, du+ 2Q(\epsilon)(s-r)\right)\,dr\,ds,
\]
where in our case $\kappa(u) = K_1$ and $Q(\epsilon)\leq p/2$. Hence $\epsilon$ is suitable whenever
\begin{align*}
(4C_7(p)\epsilon)^{-1}	& \geq \int_0^{R_1} \int_0^s \exp \left(\frac{1}{2}\int_r^s K_1 u\, du+ p(s-r)\right)\,dr\,ds\\
						& =  \int_0^{R_1} \int_0^s \exp \left(\frac{K_1}{4}(s^2-r^2)+ p(s-r)\right)\,dr\,ds\\
						& =  \int_0^{R_1} \exp\left(\left(\frac{\sqrt{K_1}}{2}s+\frac{p}{\sqrt{K_1}}\right)^2\right) \int_0^s \exp \left(-\left(\frac{\sqrt{K_1}}{2}r+\frac{p}{\sqrt{K_1}}\right)^2\right)\,dr\,ds,
\end{align*}
which implies by setting $v/\sqrt{2}=\sqrt{K_1}r/2 +p/\sqrt{K_1}  $, ($dv = \sqrt{K_1/2} dr$)
\begin{align*}
(4C_7(p)\epsilon)^{-1}	& \geq  \sqrt{\frac{2}{K_1}}\int_0^{R_1} \exp\left(\left(\frac{\sqrt{K_1}}{2}s+\frac{p}{\sqrt{K_1}}\right)^2\right) \int_{p\sqrt{2/K_1}}^{\sqrt{K_1/2}s+p\sqrt{2/K_1}} \exp \left(-\frac{v^2}{2}\right)\,dv\,ds\\
						& = \sqrt{\frac{4\pi}{K_1}}\int_0^{\tilde{b}} \exp\left(\left(\frac{\sqrt{K_1}}{2}s+\frac{p}{\sqrt{K_1}}\right)^2\right) \left(\Phi\left(\sqrt{K_1/2}s+p\sqrt{2/K_1}\right)-\Phi\left(p\sqrt{2/K_1}\right)\right)\,ds,
\end{align*}
where $\tilde{b}=\sqrt{(2C_7(p)/C_6(p))^{2/p}-1} >0$ and $\Phi(\cdot)$ is the cumulative distribution function of
the standard normal distribution.

The inrements of a cumulative distribution function can be at most one.
To ease the calculations of $C_{10}$ and $C_{11}$ below, it is thus enough for $\epsilon$ to satisfy the following inequality:
\[
 \epsilon  \leq  1\wedge \left(8C_7(p) \sqrt{\frac{\pi}{K_1}}\int_0^{\tilde{b}}\exp\left(\left(\frac{\sqrt{K_1}}{2}s+\frac{p}{\sqrt{K_1}}\right)^2\right) \,\rmd s\right)^{-1}.
\]

In \cite{eberle:guillin:zimmer:Trans:2019} a further key quantity is $\beta$, which we denote by $\phi$ in order to avoid
a clash of notation. We calculate $\phi$ using its definition in Theorem 2.2 of \cite{eberle:guillin:zimmer:Trans:2019},
noting that $Q(\epsilon)\leq p/2$,
\begin{align*}
\phi^{-1}				& =\int_0^{R_2}  \int_0^s \exp \left(\frac{1}{2}\int_r^s K_1 u\, \rmd u+ 2Q(\epsilon)(s-r)\right)\, \rmd r\, \rmd s\\
& \leq\int_0^{R_2}  \int_0^s \exp \left(\frac{1}{2}\int_r^s K_1 u\, \rmd u+ p(s-r)\right)\, \rmd r\, \rmd s\\
						& = \sqrt{\frac{4\pi}{K_1}}\int_0^{\bar{b} } \exp\left(\left(\frac{\sqrt{K_1}}{2}s+\frac{p}{\sqrt{K_1}}\right)^2\right) \left(\Phi\left(\sqrt{K_1/2}s+p\sqrt{2/K_1}\right)-\Phi\left(p\sqrt{2/K_1}\right)\right)\,ds,
\end{align*}
where $\bar{b} = \sqrt{(4C_7(p)(1+C_6(p))/C_6(p))^{2/p}-1}>0$. One notices that
\[
\phi \geq \bar{\phi} = \left(\sqrt{\frac{4\pi}{K_1}} \bar{b}  \exp\left(\left(\frac{\sqrt{K_1}}{2}\bar{b} +\frac{2}{\sqrt{K_1}}\right)^2\right) \right)^{-1}
\]
hence Theorem 2.2 of \cite{eberle:guillin:zimmer:Trans:2019} implies that we can choose
\[
C_8 = \bar{C}_8 :=\min\{\bar{\phi}, C_6(p), 4C_7(p)\epsilon C_6(p)\}/2.
\]



As for the calculations of $C_{10}$ and $C_{11}$ in \eqref{lajja}, recall the definition of $\mathcal{W}_{\rho_{2}}$ in
\eqref{lajjja}.
Moreover, recall that $f$, $F$ and $R_2$  are given in  \cite[Section~5]{eberle:guillin:zimmer:Trans:2019} and
satisfy $\frac{1}{2} F(r) \leq f(r) \leq F(r)$ for $r \leq R_2$ and $f(r) = f(R_2)$ for $r \geq R_2$.
In addition, $r\exp{(-K_1R_2^2/4 -pR_2)} \leq F(r) \leq r$ for all $r \leq R_2$ and $f(r) \leq R_2$ for all $r>0$.

With these tools at hand, we take $\theta,\theta'$ such that $|\theta-\theta'|=r \leq R_2$ and estimate
\begin{align*} &
[1 \wedge |\theta-\theta'|](1+V_2(\theta)+ V_2(\theta'))\\
 	& \leq \epsilon^{-1}|\theta-\theta'|(\epsilon+\epsilon V_2(\theta)+\epsilon  V_2(\theta'))\\
											& \leq 2\epsilon^{-1}\exp{(K_1R_2^2/4 +pR_2)} \left(\frac{1}{2}F(|\theta-\theta'|)\right)(1+\epsilon V_2(\theta)+\epsilon  V_2(\theta'))\\
											& \leq \bar{C}_{10}^{-1} f(|\theta-\theta'|)(1+\epsilon V_2(\theta)+\epsilon  V_2(\theta')),
\end{align*}
where $\bar{C}_{10}  = \frac{\epsilon}{2} \exp{(-K_1 \overline{R}_2^2/4 -p\overline{R}_2)}$. For $r > R_2$ we get
\begin{align*}
 f(|\theta-\theta'|)(1+\epsilon V_2(\theta)+\epsilon  V_2(\theta')) 	&=  f(R_2)(1+\epsilon V_2(\theta)+\epsilon  V_2(\theta')) \\
 													&\geq \tilde{C}_{10}[1 \wedge |\theta-\theta'|](1+V_2(\theta)+ V_2(\theta')),
\end{align*}
where $\tilde{C}_{10} =  \frac{\epsilon}{2} \underline{R}_2\exp{(-K_1\overline{R}_2^2/4 -p\overline{R}_2)}$.
We can thus take $C_{10} =  \min\{\bar{C}_{10} , \tilde{C}_{10} \}$.

To calculate $C_{11}$, one considers, for $|\theta-\theta'|=r \leq R_2$
\begin{align*}
 f(|\theta-\theta'|)(1+\epsilon V_2(\theta)+\epsilon  V_2(\theta')) 	& \leq |\theta-\theta'|(1+\epsilon V_2(\theta)+\epsilon  V_2(\theta')) 	 \\
 													& \leq C_{11}[1 \wedge |\theta-\theta'|](1+V_2(\theta)+ V_2(\theta')) ,
\end{align*}
where $C_{11} = 1+R_2$. In the case where $r > R_2$, we also get
\begin{align*}
 f(|\theta-\theta'|)(1+\epsilon V_2(\theta)+\epsilon  V_2(\theta')) 	&  =  f(R_2)(1+\epsilon V_2(\theta)+\epsilon  V_2(\theta')) 	 \\
 													& \leq C_{11}[1 \wedge |\theta-\theta'|](1+V_2(\theta)+ V_2(\theta')),
\end{align*}
hence the choice $C_{11}  = 1+ R_2$ is indeed fine.
\end{proof}

\subsection{Proof of Main Result}

\begin{proof}[Proof of Theorem \ref{main}.]
Trivially, $\E^{3/4}[V_{4}(\theta_{0})]\leq 1+\E[V_{4}(\theta_{0})]$ and $\E[V_{4}(\theta_{0})]\leq 2+2\E[|\theta_{0}|^{4}]$.
We estimate, for $kT\leq t\leq (k+1)T$,
\begin{align*} &
W_1(\mathcal{L}(Y^{\lambda}_t(\mathbf{X}),\pi_{\beta}) \\
&\leq W_1(\mathcal{L}(Y^{\lambda}_t(\mathbf{X}),\mathcal{L}(\tilde{Y}^{\lambda}_t(\mathbf{X}))
+ W_1(\mathcal{L}(\tilde{Y}^{\lambda}_t(\mathbf{X})),\mathcal{L}(L^{\lambda}_t))+
W_1(\mathcal{L}(L^{\lambda}_t),\pi_{\beta}) \\
&\leq
\lambda^{1/2}\left[\rme^{-\min\{C_{8},a/4\}k/2}[C_{19}\E[V_{4}(\theta_{0})]+C_{15}\E^{3/4}[V_{4}(\theta_{0})]]+C_{17}+C_{15}\right]
+{C_9} \rme^{-C_8\lambda t}w_{1,2}(\theta_0,\pi_{\beta})\\
&\leq \rme^{-\min\{C_{8},a/4\}k/2}\lambda^{1/2}(C_{19}+C_{15})[\E[V_{4}(\theta_{0})]+1]\\
&+ (C_{17}+C_{15})\sqrt{\lambda}+C_{9}\rme^{-\min\{C_{8},a/4\}k/2}
[1+E[V_{2}(\theta_{0})]+\int_{\mathbb{R}^{d}}V_{2}(\theta)\pi_{\beta}(\mathrm{d}\theta)]\\
&\leq \rme^{-\min\{C_{8},a/4\}k/2}\lambda^{1/2}(C_{19}+C_{15})[2\E[|\theta_{0}|^{4}]+3]\\
&+ (C_{17}+C_{15})\sqrt{\lambda}+C_{9}\rme^{-\min\{C_{8},a/4\}k/2}
[2+E[V_{4}(\theta_{0})]+\int_{\mathbb{R}^{d}}V_{2}(\theta)\pi_{\beta}(\mathrm{d}\theta)]\\
&\leq \rme^{-\min\{C_{8},a/4\}k/2}\lambda^{1/2}_{\mathrm{max}}(C_{19}+C_{15})[2\E[|\theta_{0}|^{4}]+3]\\
&+ (C_{17}+C_{15})\sqrt{\lambda}+C_{9}\rme^{-\min\{C_{8},a/4\}k/2}
[4+2\E[|\theta_{0}|^{4}]+\int_{\mathbb{R}^{d}}V_{2}(\theta)\pi_{\beta}(\mathrm{d}\theta)]\\
&\leq \rme^{-\min\{C_{8},a/4\}k/2}[2\lambda^{1/2}_{\mathrm{max}}(C_{19}+C_{15})+2C_{9}] E[|\theta_{0}|^{4}]\\
&+ \rme^{-\min\{C_{8},a/4\}k/2}[3(C_{19}+C_{15})\lambda^{1/2}_{\mathrm{max}}+4C_{9}+C_{9}\int_{\mathbb{R}^{d}}V_{2}(\theta)\pi_{\beta}(\mathrm{d}\theta)]\\
&+ (C_{17}+C_{15})\sqrt{\lambda},
\end{align*}
by Corollary \ref{crux},  Corollary \ref{cor_vizier}, Proposition \ref{prop:contra} and by \eqref{eq:lucia}.
Noting \eqref{antoniojobim} and $\lfloor n\lambda\rfloor \lfloor 1/\lambda\rfloor\leq n$, this implies, for all $n\in\mathbb{N}$,
\begin{equation*}
W_1(\mathcal{L}(\theta^{\lambda}_n,\pi_{\beta})\leq \rme^{-C_{0}\lfloor n\lambda\rfloor}\bar{C}_{1}[1+\E[|\theta_{0}|^{4}]]+C_{2}\sqrt{\lambda},	
\end{equation*}
where ${C}_{0}=\min\{C_{8},a/4\}/2$,
$$
\bar{C}_{1}=\lambda_{\mathrm{max}}^{1/2}(C_{19}+C_{15})+2C_{9}+3\lambda^{1/2}_{\mathrm{max}}(C_{19}+C_{15})+4C_{9}+
C_{9}\int_{\mathbb{R}^{d}}V_{2}(\theta)\pi_{\beta}(\mathrm{d}\theta)
$$
and $C_{2}=C_{17}+C_{15}$. We can thus set $C_{1}:=e^{C_{0}}\bar{C}_{1}$ and conclude.
\end{proof}

\begin{remark} {\rm The proof of Lemma \ref{contractionconst} shows that $C_9$ has a rather poor (exponential) dependence on the dimension $d$,
see the definitions of $C_{10}$ and $\overline{R}_{2}$ therein as well as the definition of $C_{7}(p)$ in Lemma \ref{lyapp}.
Improvements on the dimension dependence here would require enhancing the coupling arguments of
\cite[Corollary~2.3]{eberle:guillin:zimmer:Trans:2019} significantly.}
\end{remark}

\section{Applications to non-convex optimization}\label{sec:application}

\begin{example}
{\rm Let
$Z_{n}\in\mathbb{R}^{m}$, $n\in\mathbb{Z}$ be a (strict-sense) stationary sequence.
Let us consider the problem of online nonlinear prediction of $Z_{n}$ as a function of the $p$ previous
observations $Z_{n-1},\ldots,Z_{n-p}$. We use a predictor of the form $\hat{Z}_n(\theta)=
f_{\theta}(Z_{n-1},\dots,Z_{n-p})$, where $f_{\theta}:\mathbb{R}^{p\times m}\to{}
\mathbb{R}^{m}$, $\theta\in\mathbb{R}^{d}$ is a parametric family of (non-linear)
twice continuously
differentiable functions,
such as the output of a neural network. We seek to minimize the regularized mean-square error,
that is,
\begin{equation}\label{nonconv-problem}
U(\theta)= \E[ |
Z_p - f_{\theta}(Z_{p-1},\dots,Z_0)|^2] + c |\theta|^2{}
\end{equation}
for some $c>0$. Here
\[
H^i(\theta,z)=2\left \langle z^p - f_{\theta}(z^{p-1},\dots,z^0),
\frac{\partial f_{\theta}(z^{p-1},\dots,z^0)}{\partial \theta^i}\right\rangle_{\mathbb{R}^{m}} + 2c \theta^i
\]
for each $i=1,\ldots,d$. Let $Z$ be conditionally $L$-mixing.
If we assume that $f_{\theta}$, $\partial_{\theta}f_{\theta}$, $\partial_{\theta\theta}f_{\theta}$
as well as $Z_{n}$
are all bounded and $z\to f_{\theta}(z)$ and $z\to \partial_{\theta}f_{\theta}(z)$ are Lipschitz,
then the assumptions of our paper hold, as easily checked.
The SGLD then provides an algorithm to optimize the prediction procedure.

Let us now apply this framework to online price prediction, a procedure of paramount importance for
econometric analysis and algorithmic trading (see e.g.\ the paper \cite{energy}
which surveys 27 methods, including several neural network-based approaches).

Denote by
$Z_{n}\in\mathbb{R}^{m}$ the return vector on $m$ assets at time $t$.
While stationarity
of the process $Z$ holds on appropriate time scales (see Subsection 3.1 of \cite{cont}), independence
badly fails (see Section 5 of \cite{cont} and the references therein).
Conditional $L$-mixing
holds e.g.\ when $Z$ is a (possibly nonlinear) Lipschitz functional of an infinite moving average processes, see \cite{4}.
Another example for conditional $L$-mixing is rough volatility models, see \cite{rough,4}.
Our SGLD algorithm with dependent data can then
be used to find the optimizer of \eqref{nonconv-problem}.

Financial applications provide a rich source of problems where stochastic approximation needs to
be used in settings with dependent data: optimal posting of orders, optimal split of orders, etc.
Here we do not enter into more details, see \cite{laruelle1,laruelle2,laruelle3}.}

\end{example}

\begin{example}
	
{\rm We sketch a general optimization framework here. It is often the case that a deterministic function
we wish to minimize has some representation as the expectation of a functional of a random variable.
It is also often clear that the optimizer necessarily lies in some (big) compact set $\mathrm{B}_{R''}$
where $R''$ can be estimated.

Let $R>0$ and let $\overline{U}:\mathrm{B_{R}}\to \mathbb{R}_{+}$ be a possibly non-convex function. We assume
that it admits a stochastic representation $\overline{U}(\theta)=E[\overline{u}(\theta,X)]$, $\theta\in\mathrm{B}_{R}$
where $\overline{u}:\mathrm{B}_{R'}\times \mathbb{R}^{m}\to\mathbb{R}_{+}$ is continuous and continuously differentiable
in $\mathrm{int}\,\mathrm{B}_{R'}$ for some $R'>R$ and $\frac{\partial}{\partial{\theta}}\overline{u}(\theta,x)${}
is jointly Lipschitz-continuous in $(\theta,x)\in \mathrm{B}_{R}\times\mathbb{R}^{m}$. $X$ is a $\mathbb{R}^{m}$-valued random variable
which we assume bounded, for simplicity.
We assume that $\overline{U}$ has a unique minimizer
$\theta^{*}\in \mathrm{int}\,\mathrm{B}_{R''}$
with some $R''<R$. We may always assume $u^{*}:=\overline{U}(\theta^{*})\geq 0$ (by adding a suitably large
constant).


We propose an approach to find $\theta^{*}$ using SGLD.
The case of multiple global minimizers can be handled similarly.
Since $\theta^{*}\in \mathrm{int}\, \mathrm{B}_{R''}$, one deduces $\inf_{|\theta|=R''}\overline{U}(\theta)\geq u^{*}+\kappa$
for some $\kappa>0$.
Then, continuity implies that for some $\delta>1$ which is close enough to $1$,
$$
\inf_{\lambda\in [1,\delta^{2}],|\theta|=R''} E\left[\overline{u}(\theta\sqrt{\lambda},X)\left(1-\frac{\lambda}{\delta^{2}}\right)\right]+
|\theta\sqrt{\lambda}|^{2}\frac{\lambda}{\delta^{2}}\geq u^{*}+\kappa/2.
$$
One reasonably assumes that $R''\delta<R$ and proceeds by defining, for $\lambda\in [1,\delta^{2}]$ and for all $\theta$ with $|\theta|=R''$,
$u(\theta\sqrt{\lambda},x):=\overline{u}(\theta\sqrt{\lambda},X)\left(1-\frac{\lambda}{\delta^{2}}\right)+|\theta\sqrt{\lambda}|^{2}\frac{\lambda}{\delta^{2}}$.
An alternative way of writing this is
\begin{equation}\label{connecticut}
u(\theta,x)=\overline{u}(\theta,X)\left(1-\frac{|\theta|^{2}}{\delta^{2}(R'')^{2}}\right)+|\theta|^{2}
\frac{|\theta|^{2}}{\delta^{2}(R'')^{2}}
\end{equation}
for $\theta$ with $R''\leq |\theta|\leq R''\delta$.
Furthermore, for $\theta\in\mathrm{int}\,\mathrm{B}_{R''}$ define $u(\theta,x):=\overline{u}(\theta,x)$, $x\in\mathbb{R}^{m}$ and
for $\theta\notin \mathrm{B}_{R''\delta}$ set $u(\theta,x):=|\theta|^{2}$.
Define also $U(\theta):=E[u(\theta,X)]$, $\theta\in\mathbb{R}^{d}$.

It is claimed that $\theta^{*}$ is also the unique minimizer of $U(\theta)$. To show this, it suffices to demonstrate that
$U(\theta)>u^{*}$ holds for all $\theta$ with $|\theta|\geq R''$. This
holds for $R''\leq \theta\leq R''\delta$ by the choice of $\delta$ and it is trivial for $|\theta|\geq R''\delta$
since $U$ is monotone in $|\theta|$ over that set.

It is not difficult to see, using \eqref{connecticut},
that $u(\theta,x)$ is continuously differentiable and $H(\theta,x):=\frac{\partial}{\partial\theta}u(\theta,x)${}
is jointly Lipschitz-continuous. The dissipativity condition is trivial for $H$ as it is obvious
for $\theta\to |\theta|^{2}$ and $H$ coincides with the latter function outside a compact set.
Let $X_{n}$, $n\geq 1$ be a stationary sequence with common law equal to that of $X$, satisfying
Assumption \ref{assum:lmiu}.
One then implements the SGLD algorithm for $\beta$ large, $\lambda$ small. For $n$ large enough,
$\theta_{n}^{\lambda}$ is a good approximate sample from $\pi_{\beta}$ and hence, by maximality of $\theta^{*}$,
a good estimate for $\theta^{*}$ (where goodness of the estimate is quantified by our main result, Theorem \ref{main}).}

\end{example}

\begin{example} {\rm A deep neural network with weight constraints falls under the scope of the present section.
We denote by $d_{1}$ the number of hidden layers, let $d_{2}$ be the number of nodes in each layer.
The parameter space is $\mathbb{R}^{d}$, where $d:=d_{1}\times d_{2}\times d_{2}$.
Elements $\mathbf{w}\in\mathbb{R}^{d}$ are weight matrices where $\mathbf{w}=[w_{k,j,l}]$
and the indices' ranges are $k=0,\ldots,d_{1}-1$, $j,l=1,\ldots,d_{2}$.	

The (training) data sequence $X_{n}$, $n\geq 1$ is $m$-dimensional bounded and stationary,
satisfying Assumption \ref{assum:lmiu}.
For simplicity we assume $m=d_{2}+1$.

A generic element of $\mathbb{R}^{m}$ is denoted by $\mathbf{x}:=(x_{1},\ldots,x_{m})$.
We fix an activation function $\alpha:\mathbb{R}\to\mathbb{R}$ that is twice continuously differentiable.

A possible specification is $$
\alpha(u):=\frac{1}{1+e^{-u}}+1,\ u\in\mathbb{R},
$$
the well-known sigmoid function.

We recursively define a doubly indexed sequence of functions $f_{k,l}:\mathbb{R}^{d}_{+}\times\mathbb{R}^{m}\to\mathbb{R}$ by
$$
f_{0,l}(\mathbf{w},\mathbf{x}):=x_{l},\ 1\leq l\leq d_{2},\ (\mathbf{w},\mathbf{x})\in\mathbb{R}^{d+m},
$$
and
$$
f_{k+1,l}(\mathbf{w},\mathbf{x}):=\alpha\left(\sum_{j=1}^{d_{2}}w_{k,j,l}f_{k,j}(\mathbf{w},\mathbf{x})\right),\ 1\leq l\leq d_{2},
$$
for $k:=0,\ldots,d_{1}-1$. We set
$$
{\overline{u}}(\mathbf{w},\mathbf{x}):=\left(\frac{1}{d_{2}}\sum_{l=1}^{d_{2}}f_{d_{1},l}-x_{d_{2}+1}\right)^{2}
$$
and we aim at minimizing $E[F(\mathbf{w},X_{0})]$ in the parameter $\mathbf{w}$.

We imagine that the last coordinate of the $X_{n}$ are some (noisy) functionals of their first $d_{2}$
coordinates and we try to use a neural network, characterized by the weights $\mathbf{w}$, that mimics
this functional relationship in the best possible way. This would amount to minimizing $E[\overline{u}(\mathbf{w},X_{0})]$
in $\mathbf{w}$.

It is a standard technique against overfitting to set a maximum for the norm of $\mathbf{w}$ when optimizing.
This means maximizing over $\mathrm{B}_{R}$ for some $R$.
 By induction, it is easy to show that $f_{k,l}$ are twice continuously differentiable in $\mathbf{w},\mathbf{x}$ and
so is $\overline{u}$.  This implies joint Lipschitz-continuity of $\frac{\partial}{\partial\mathbf{w}}\overline{u}$
on the compact set $\mathrm{B}_{R}$, for each $R$. It follows that we can apply the optimization procedure
outlined above and obtain reassuring theoretical guarantees for the convergence of SGLD iterates for deep neural
networks with weight constraints.}

\end{example}

\begin{example}
{\rm For a given input vector $x \in \mathbb{R}^{m_1}$, an autoencoder aims to learn a cost-effective representation of $x$. Consider the following neural network:
\[
\widehat{x} = \sigma_2(g_2(W_2)\sigma_1(g_1(W_1)x+b_1)+b_2),
\]
where $W_1 \in \mathbb{R}^{d_1 \times {m_1}}, W_2 \in \mathbb{R}^{{m_1} \times d_1}$ are the weights, $b_1 \in \mathbb{R}^{d_1}, b_2 \in \mathbb{R}^{m_1}$ are the biases, $\sigma_1: \mathbb{R}^{d_1} \rightarrow \mathbb{R}^{d_1}, \sigma_2: \mathbb{R}^{m_1} \rightarrow \mathbb{R}^{m_1}$ are the elementwise activation functions, and moreover, the $(i_1,j_1)$-th element of $g_1: \mathbb{R}^{d_1\times {m_1}}\rightarrow \mathbb{R}^{d_1\times {m_1}}$ is given by $g_1^{(i_1,j_1)}(W_1) = C_1\tanh(W_1^{(i_1,j_1)}/C_1)$ whereas the $(i_2,j_2)$-th element of $g_2: \mathbb{R}^{{m_1} \times d_1}\rightarrow \mathbb{R}^{{m_1} \times d_1}$ is given by $g_2^{(i_2,j_2)}(W_2) = C_2\tanh(W_2^{(i_2,j_2)}/C_2)$ with $C_1, C_2>0$. Here, we assume that the weight parameters are bounded, which is reasonable from both practical and analytical viewpoints (see \cite{rao1998function} and references therein). To achieve this, we apply $g_1, g_2$ to the weight matrices $W_1, W_2$, then, each element of $g(W_1), g(W_2)$ are bounded by $C_1, C_2$, respectively. We consider $g(z) = c\tanh(z/c)$, $z \in\mathbb{R}$, $c>0$ for the weight transformation as it is bounded, while 
for fixed $\epsilon>0$, $|g(z)-z|<\epsilon$ for $z \in (-I(c),I(c))$ with $0<I(c)<c$. Moreover, it is continuously differentiable with bounded first derivative and monotonically increasing on $\mathbb{R}$. Denote by $[W_1], [W_2]$ the vectors of all elements in $W_1, W_2$ respectively, and denote by $\theta = ([W_1], [W_2], b_1, b_2) \in \mathbb{R}^d$ with $d = 2d_1{m_1} +d_1+{m_1}$. We aim to minimize the regularized objective function:
\[
\min_{\theta}U(\theta) =\min_{\theta}\left(\E[|X-\widehat{X}|^2]+c|\theta|^2\right),
\]
where $c>0$. Autoencoders can be applied to extract the market implied features, see \cite{dixon2020machine, heaton2017deep}. A denoising autoencoder (DAE) with masking noise (see Subsection 3.3 of \cite{vincent2010stacked}) is considered in \cite{kirczenow2018machine}, which is used to learn the features of the missing yield parameters from the bond yields in a chosen surrogate liquid market, and thus to obtain missing bond yields in illiquid market. For the DAE algorithm, a noisy version $\widetilde{x} \in \mathbb{R}^{m_1}$ of the input vector $x$ is used in the neural network, which is obtained by applying the corruption process $q_D(\widetilde{x}|x)$. The objective function of the DAE algorithm becomes
\[
\min_{\theta}\overline{U}(\theta) =\min_{\theta}\left(\E[|X-\overline{X}|^2]+c|\theta|^2\right),
\]
where $\overline{x} = \sigma_2(g_2(W_2)\sigma_1(g_1(W_1)\widetilde{x}+b_1)+b_2)$.

In the online setting, the sequence of the (dependent) input vectors $\{x_n\}_{n \in \mathbb{Z}}$, which are bond yields in the liquid market, can be generated using the Nelson– Siegel model (see \cite{nelson1987parsimonious}, \cite{diebold2006forecasting} and \cite{jrfm13040065}). Moreover, the input datapoints are scaled with the maximum yield equal to one, and the corrupted version of each input $\{\widetilde{x}_n\}_{n \in \mathbb{Z}}$ is generated  according to the distribution $\widetilde{x}_i \sim q_D(\widetilde{x}_i|x_i)$ for each $i \in \mathbb{Z}$ before feeding into the neural network. The activation functions $\sigma_1$, $\sigma_2$ are set to be elementwise sigmoid function. Denote by $H: \mathbb{R}^d \times \mathbb{R}^m \rightarrow \mathbb{R}^d$ the stochastic gradient given by
\begin{align*}
H(\theta, z)
&= (H_{W_1^{(1,1)}}(\theta, z), \dots, H_{W_1^{(d_1,{m_1})}}(\theta, z), H_{W_2^{(1,1)}}(\theta, z), \dots, H_{W_2^{({m_1},d_1)}}(\theta, z), \\
&\qquad H_{b_1^{(1)}}(\theta, z), \dots,H_{b_1^{(d_1)}}(\theta, z),H_{b_2^{(1)}}(\theta, z), \dots,H_{b_2^{({m_1})}}(\theta, z)),
\end{align*}
where $z = (x, \widetilde{x}) \in \mathbb{R}^{m}$ with $m = 2m_1$. Then, one obtains, for any $i_1 = 1, \dots, d_1$, $j_1 = 1, \dots, {m_1}$,
\begin{align*}
&H_{W_1^{(i_1,j_1)}}(\theta, z) \\
& = 2cW_1^{(i_1,j_1)}-2\sum_{k = 1}^m(x^{(k)}-\overline{x}^{(k)})\partial_{W_1^{(i_1,j_1)}}\sigma_2^{(k)}(g_2^{(k, \cdot)}(W_2)\sigma_1(g_1(W_1)\widetilde{x}+b_1)+b_2^{(k)}) \\
&\quad \times g_2^{(k,i_1)}(W_2)\partial_{W_1^{(i_1,j_1)}}\sigma_1^{(i_1)}(g_1(W_1)^{(i_1,\cdot)}\widetilde{x}+b_1^{(i_1)})\partial_{W_1^{(i_1,j_1)}}g_1^{(i_1,j_1)}(W_1)\widetilde{x}^{(j_1)}.
\end{align*}
One can check that all of the assumptions hold for $H_{W_1^{(i_1,j_1)}}(\theta, z)$. Similarly, the assumptions hold for $H_{W_2^{(i_2,j_2)}}(\theta, z), H_{b_1^{(i_3)}}(\theta, z), H_{b_2^{(i_4)}}(\theta, z)$ with $i_2, i_4 = 1, \dots, {m_1}$, $j_2, i_3 = 1, \dots, d_1$.}
\end{example}

\appendix

\section{Auxiliary results}
We present a simpler version of  \cite[Theorem~7.19]{liptser:shiryaev:2001}, which is suitable for the purposes of this article.
\begin{lemma}  \label{lsKL}
Let $(\xi_t)_{t\ge0}$ and $(\eta_t)_{t\ge0}$ be two diffusion type processes with
\begin{equation}\label{ksi}
  \rmd \xi_t = a_t(\xi) \rmd t + \sigma \rmd B_t, \qquad \mbox{for } t>0,
\end{equation}
and
\begin{equation}\label{eta}
  \rmd \eta_t = b_t(\eta) \rmd t + \sigma \rmd B_t \qquad \mbox{for } t>0,
\end{equation}
where $\xi_0=\eta_0$ is an $\mathcal{F}_0$ measurable random variable and $\sigma*$ is a positive constant. Suppose also that the nonanticipative functionals $(a_t)_{t \geq 0}$ and $(b_t)_{t \geq 0}$ are such that a unique (continuous) strong solution exist for \eqref{ksi} and \eqref{eta} respectively. If, for any fixed  $T>0$,
\[
\int_0^T[|a_s(\xi)|^2 + |b_s(\xi)|^2] \rmd s <\infty \mbox{ (a.s.) and } \int_0^T[|a_s(\eta)|^2 + |b_s(\eta)|^2] \rmd s <\infty \mbox{ (a.s.),}
\]
then $\mu_\xi^{T}= \mathcal{L}(\xi_{[0,T]}) \backsim \mu_\eta^{T}= \mathcal{L}(\eta_{[0,T]})$ and the densities are given by
\begin{equation}\label{eq:density_eta_ksi}
  \frac{\rmd \mu^T_{\eta}}{\rmd \mu^T_{\xi}} (\xi) = \exp\left(-\sigma^{-2} \int_0^T \ps{a_s(\xi)-b_s(\xi)}{\rmd\xi_s} + \frac{1}{2\sigma^2}\int_0^T [|a_s(\xi)|^2 - |b_s(\xi)|^2]\rmd s\right)
\end{equation}
and
\begin{equation}\label{eq:density_eta_ksi1}
  \frac{\rmd \mu^T_{\xi}}{\rmd \mu^T_{\eta}} (\eta) = \exp\left(\sigma^{-2} \int_0^T \ps{a_s(\eta)-b_s(\eta)}{\rmd\eta_s} - \frac{1}{2\sigma^2}\int_0^T [|a_s(\eta)|^2 - |b_s(\eta)|^2]\rmd s\right).
\end{equation}
Finally, the Kullback-Leibler divergence is given by
\begin{equation}\label{eq:definition-KL}
  \operatorname{KL}(\mu_\xi^T,\mu_\eta^T ) = \frac{1}{2 \sigma^2}\E\left[\int_0^T|a_s(\xi) - b_s(\xi)|^2 \rmd s\right].
\end{equation}
\end{lemma}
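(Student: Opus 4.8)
The plan is to derive all three claims from the general Girsanov-type theorem for diffusion-type processes, \cite[Theorem~7.19]{liptser:shiryaev:2001}, after a trivial rescaling that removes the constant $\sigma$. First I would set $\bar\xi_t:=\xi_t/\sigma$ and $\bar\eta_t:=\eta_t/\sigma$; since $(a_t)_{t\ge0}$ and $(b_t)_{t\ge0}$ are nonanticipative, so are $\bar a_t(w):=\sigma^{-1}a_t(\sigma w)$ and $\bar b_t(w):=\sigma^{-1}b_t(\sigma w)$, and \eqref{ksi}--\eqref{eta} transform into diffusion-type equations with unit diffusion coefficient, the same driving Brownian motion $B$, and common $\mathcal{F}_0$-measurable initial value $\bar\xi_0=\bar\eta_0$. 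Strong existence and uniqueness are inherited, and the assumed a.s.\ finiteness of $\int_0^T\bigl(|a_s(\xi)|^2+|b_s(\xi)|^2\bigr)\rmd s$ and of $\int_0^T\bigl(|a_s(\eta)|^2+|b_s(\eta)|^2\bigr)\rmd s$ is exactly the hypothesis under which \cite[Theorem~7.19]{liptser:shiryaev:2001} guarantees that the processes are of diffusion type, that $\mu_\xi^T\sim\mu_\eta^T$, and that the density has the stated Girsanov form; undoing the scaling restores the powers of $\sigma$ and yields \eqref{eq:density_eta_ksi}. Formula \eqref{eq:density_eta_ksi1} then follows by applying the same theorem with the roles of $\xi$ and $\eta$ interchanged and rearranging the signs in the exponent.

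For \eqref{eq:definition-KL} I would substitute the dynamics $\rmd\xi_s=a_s(\xi)\rmd s+\sigma\rmd B_s$, equivalently $\sigma\rmd B_s=\rmd\xi_s-a_s(\xi)\rmd s$, into the exponent of \eqref{eq:density_eta_ksi}. Using the elementary identity $2\ps{a-b}{a}-|a|^2+|b|^2=|a-b|^2$, this recasts the density as the stochastic exponential
\[
\frac{\rmd\mu^T_{\eta}}{\rmd\mu^T_{\xi}}(\xi)=\exp\!\left(-\sigma^{-1}\!\int_0^T\ps{a_s(\xi)-b_s(\xi)}{\rmd B_s}-\frac{1}{2\sigma^2}\!\int_0^T|a_s(\xi)-b_s(\xi)|^2\,\rmd s\right),
\]
so that $\operatorname{KL}(\mu_\xi^T,\mu_\eta^T)=\E\bigl[-\log\tfrac{\rmd\mu^T_{\eta}}{\rmd\mu^T_{\xi}}(\xi)\bigr]=\sigma^{-1}\E\bigl[\int_0^T\ps{a_s(\xi)-b_s(\xi)}{\rmd B_s}\bigr]+\tfrac{1}{2\sigma^2}\E\bigl[\int_0^T|a_s(\xi)-b_s(\xi)|^2\,\rmd s\bigr]$. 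It then only remains to discard the stochastic-integral term.

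The one genuinely delicate point — and the step I expect to be the main obstacle — is justifying that $M_t:=\int_0^t\ps{a_s(\xi)-b_s(\xi)}{\rmd B_s}$, which a priori is only a local martingale, contributes zero expectation. When $\E\bigl[\int_0^T|a_s(\xi)-b_s(\xi)|^2\rmd s\bigr]<\infty$ — which is the case in every application of this lemma in the paper, since the relevant drifts are $L^2$-controlled there — $M$ is a genuine $L^2$-martingale, hence $\E[M_T]=0$ and \eqref{eq:definition-KL} follows. In general I would localize along stopping times $\tau_n\uparrow T$, use $\E[M_{\tau_n}]=0$ on $[0,\tau_n]$, and pass to the limit with the help of $\operatorname{KL}\ge0$ and Fatou's lemma, so that the identity persists, both sides being $+\infty$ simultaneously if the right-hand side diverges. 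Everything else in the proof is the cited theorem together with routine bookkeeping.
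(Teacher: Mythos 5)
Your proposal is correct and follows essentially the same route as the paper, which simply invokes \cite[Theorem~7.19]{liptser:shiryaev:2001} (extended to the multidimensional setting) for the equivalence and density formulas and then computes the Kullback--Leibler divergence directly from the definition. Your extra care about the zero-mean stochastic integral and the rescaling to remove $\sigma$ is exactly the "routine bookkeeping" the paper leaves implicit.
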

\begin{proof}
The proof follows from a straightforward extension of \cite[Theorem~7.19]{liptser:shiryaev:2001}
to the multidimensional case. The computation of the Kullback-Leibler distance is a direct application of the definition.
\end{proof}
Let $V: \rset^d \to [1,\infty)$ be a measurable function. For a measurable function $f: \rset^d \to \rset$, the $V$-norm of $f$ is given by $\| f \|_V= \sup_{x \in \rset^d} |f(x)| / V(x)$. For $\xi$ and $\xi'$ two probability measures on $\rset^d$, the $V$-total variation distance of $\xi$ and $\xi'$ is given by
\[
\| \xi - \xi' \|_V= \sup_{\| f \|_V \leq 1}  \int_{\rset^d} f( \theta) \rmd \{ \xi - \xi' \}(\param)  .
\]
If $V \equiv 1$, then $\| \cdot \|_V$ is the total variation distance.
The $V$-total variation distance is also characterized in terms of coupling (see \cite[Theorem~19.1.7]{tome:moulines:2018}):
\[
\|\xi- \xi' \|_V = \inf_{\zeta\in\mathcal{C}(\xi,\xi')} \iint_{\rset^d \times \rset^d}
\{V(\theta) + V(\theta') \} \mathbbm{1}_{\{\theta \ne \theta'\}} \zeta(\rmd \theta, \rmd \theta')
\]
where $\mathcal{C}(\xi,\xi')$ is the set of coupling of $\xi$ and $\xi'$. An optimal coupling is given by
(see \cite[Theorem~19.1.6]{tome:moulines:2018})
\[
\gamma^*(B)= \{1 - \xi \wedge \xi'(\rset^d) \} \beta(B) + \int_B \xi \wedge \xi'(\rmd \theta) \delta_\theta(\rmd \theta')
\]
where $\xi \wedge \xi'$ is the infimum of probability measures $\xi$ and $\xi'$ and $\beta$ is any coupling of $\eta$ and $\eta'$ where
\[
\eta = \frac{\xi - \xi \wedge \xi'}{1 - \xi \wedge \xi'(\rset^d)} \quad \text{and} \quad
\eta' = \frac{\xi - \xi \wedge \xi'}{1 - \xi \wedge \xi'(\rset^d)}
\]
\begin{lemma}
\label{lem:domination-w-1}
For any probability measures $\xi$ and $\xi'$ on $\rset^d$, and $p \geq 1$, we get
\[
w_{1,p}(\xi,\xi')\leq \sqrt{2} \left\{ 1 +  [\xi(V_{2p})]^{1/2} + [\xi'(V_{2p})]^{1/2} \right\} \left\{ \operatorname{KL}(\xi,\xi') \right\}^{1/2} \, .
\]
\end{lemma}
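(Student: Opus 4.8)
The plan is to first replace the functional $w_{1,p}$ by a $V$-total variation distance and then prove a weighted Pinsker-type inequality for the latter. For the first step, I would note that for every coupling $\zeta\in\mathcal{C}(\xi,\xi')$ one has $1\wedge|\theta-\theta'|\le\mathbbm{1}_{\{\theta\ne\theta'\}}$ and $1+V_p(\theta)+V_p(\theta')=(\tfrac12+V_p(\theta))+(\tfrac12+V_p(\theta'))$, so plugging these into the definition \eqref{eq:definition-w-1-p} and taking the infimum over $\zeta$ yields, via the coupling characterization of $\|\cdot\|_V$ recalled above, $w_{1,p}(\xi,\xi')\le\|\xi-\xi'\|_{\frac12+V_p}$. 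Next I would use that $W\mapsto\|\xi-\xi'\|_W$ is subadditive: if $|f|\le W_0+W_1$ then $f=fW_0/(W_0+W_1)+fW_1/(W_0+W_1)$ with each summand bounded in absolute value by the corresponding $W_i$, so the supremum definition $\|\xi-\xi'\|_W=\sup_{|f|\le W}\int f\,\rmd(\xi-\xi')$ gives $\|\xi-\xi'\|_{\frac12+V_p}\le\|\xi-\xi'\|_{V_p}+\|\xi-\xi'\|_{1/2}$. Treating the constant $\tfrac12$ in a separate summand is precisely what will keep the final numerical constant equal to $\sqrt2$.

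The core of the argument is the bound $\|\xi-\xi'\|_W\le\sqrt{2(\xi(W^2)+\xi'(W^2))}\,\sqrt{\operatorname{KL}(\xi,\xi')}$ for a generic weight $W\ge0$. Assuming (as one may) that $\operatorname{KL}(\xi,\xi')<\infty$, so that $\xi\ll\xi'$, I would introduce the dominating measure $\lambda=\tfrac12(\xi+\xi')$ with densities $u=\rmd\xi/\rmd\lambda$, $v=\rmd\xi'/\rmd\lambda$, note that $u+v=2$ and $\|\xi-\xi'\|_W=\int W|u-v|\,\rmd\lambda$, then factor $u-v=(\sqrt u-\sqrt v)(\sqrt u+\sqrt v)$ and apply Cauchy--Schwarz together with $(\sqrt u+\sqrt v)^2\le2(u+v)$. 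This produces the factor $\big(2\int W^2(u+v)\,\rmd\lambda\big)^{1/2}=\sqrt{2(\xi(W^2)+\xi'(W^2))}$ and the factor $\big(\int(\sqrt u-\sqrt v)^2\,\rmd\lambda\big)^{1/2}$; the latter integral equals $2\big(1-\int\sqrt{uv}\,\rmd\lambda\big)$, twice the squared Hellinger distance of $\xi$ and $\xi'$, and I would bound it by $\operatorname{KL}(\xi,\xi')$ using the standard inequality that follows from Jensen: $\operatorname{KL}(\xi,\xi')\ge-2\log\int\sqrt{uv}\,\rmd\lambda\ge2\big(1-\int\sqrt{uv}\,\rmd\lambda\big)$.

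To finish, I would apply this bound with $W=V_p$ (so $W^2=V_{2p}$) and with $W\equiv\tfrac12$ (so $\xi(W^2)=\xi'(W^2)=\tfrac14$ and the bound reads simply $\sqrt{\operatorname{KL}(\xi,\xi')}$), obtaining $w_{1,p}(\xi,\xi')\le\big(1+\sqrt{2(\xi(V_{2p})+\xi'(V_{2p}))}\big)\sqrt{\operatorname{KL}(\xi,\xi')}$; since $\sqrt{2(a+b)}\le\sqrt2(\sqrt a+\sqrt b)$ and $1\le\sqrt2$, this is at most $\sqrt2\big(1+[\xi(V_{2p})]^{1/2}+[\xi'(V_{2p})]^{1/2}\big)\sqrt{\operatorname{KL}(\xi,\xi')}$, as claimed, the cases $\operatorname{KL}(\xi,\xi')=\infty$ or $\xi(V_{2p})+\xi'(V_{2p})=\infty$ being trivial. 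The main obstacle, to my mind, is getting the correct power of $\operatorname{KL}$: naively bounding $\|\xi-\xi'\|_W$ through the ordinary total variation distance and then Pinsker's inequality would produce only a $\operatorname{KL}^{1/4}$ rate, and one really needs the $L^2$-Cauchy--Schwarz against $\sqrt u+\sqrt v$ (equivalently, the Hellinger estimate) to recover the $\operatorname{KL}^{1/2}$ rate with a controlled constant.
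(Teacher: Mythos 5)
Your proof is correct and follows essentially the same route as the paper: both arguments first dominate $w_{1,p}(\xi,\xi')$ by a sum of a total-variation term and a $V_p$-weighted total-variation term (the paper via the explicit optimal coupling $\gamma^*$, you via the dual characterization and subadditivity of $W\mapsto\|\xi-\xi'\|_W$ — an equivalent move), and then conclude by a weighted Pinsker inequality. The only difference is that the paper outsources that last step to \cite[Lemma~24]{durmus:moulines:2017}, whereas you supply a self-contained Cauchy--Schwarz/Hellinger proof of it; this is a welcome addition rather than a deviation.
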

\begin{proof}
\begin{align*}
w_{1,p}(\xi,\xi')
&= \inf_{\zeta \in \mathcal{C}(\xi,\xi')} \iint_{\rset^{2d}} (1 \wedge |\theta - \theta'|) \{1 + V_{p}(\theta) + V_p(\theta') \} \zeta(\rmd \theta \rmd \theta') \\
&\leq \iint_{\rset^{2d}} (1 \wedge |\theta - \theta'|) \{1 + V_p(\theta) + V_p(\theta')\} \gamma^*(\rmd \theta\rmd \theta') \\
&\leq \{1 - \xi \wedge \xi'(\rset^d)\} \iint_{\rset^{2d}} \{ 1 + V_p(\theta) + V_p(\theta') \} \beta(\rmd \theta \rmd \theta') \\
&= \|\xi - \xi'\|_{\operatorname{TV}} + \| \xi - \xi' \|_{V_p} \,.
\end{align*}
The proof then follows from the weighted  Pinsker's inequality; see \cite[Lemma~24]{durmus:moulines:2017}.
\end{proof}
\begin{lemma}\label{trivial}
	Let $x,\, y\in\rset^{d}$, then
$$
	\sum_{\substack{i+j+k=p \\ \{i\neq p-1\}\cap\{j\neq1\}}}\frac{p!}{i!j!k!}\|x\|^{2i}\big(2\ps{x}{y}\big)^j \|y\|^{2k} \le
\sum_{\substack{k=0 \\ k\neq 1}}^{2p}\binom{2p}{k}\|x\|^{2p-k}\|y\|^{k}
$$
\end{lemma}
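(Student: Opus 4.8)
The plan is to strip the inequality down to a term-by-term comparison with a multinomial expansion; the only analytic ingredient is the Cauchy--Schwarz inequality. Write $s:=\|x\|$, $t:=\|y\|$ and $u:=\ps{x}{y}$. Cauchy--Schwarz gives $|u|\le st$, and hence for every integer $j\ge 0$
\[
(2u)^{j}\le |2u|^{j}=(2|u|)^{j}\le (2st)^{j}.
\]
This single chain of inequalities covers both parities of $j$ at once: when $j$ is odd $(2u)^{j}$ may be negative, which only helps. Multiplying by the nonnegative quantity $\frac{p!}{i!\,j!\,k!}\,s^{2i}t^{2k}$ and using the identity $(s^{2})^{i}(2st)^{j}(t^{2})^{k}=2^{j}s^{2i+j}t^{2k+j}$, one obtains, for every triple $(i,j,k)$ with $i+j+k=p$, that $\frac{p!}{i!\,j!\,k!}s^{2i}(2u)^{j}t^{2k}\le \frac{p!}{i!\,j!\,k!}(s^{2})^{i}(2st)^{j}(t^{2})^{k}$.

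Next I would sum this bound over exactly the index set appearing on the left-hand side of the lemma. Every majorising term is nonnegative, and that index set omits at least the triple $(i,j,k)=(p-1,1,0)$ (the term $2p\,u\,s^{2p-2}$ that was removed from the full multinomial). Therefore the left-hand side is at most the full multinomial sum minus the value of the majorising term at that triple. Since
\[
\sum_{i+j+k=p}\frac{p!}{i!\,j!\,k!}(s^{2})^{i}(2st)^{j}(t^{2})^{k}=(s^{2}+2st+t^{2})^{p}=(s+t)^{2p},
\]
and the $(p-1,1,0)$ term of this sum equals $\frac{p!}{(p-1)!\,1!\,0!}(s^{2})^{p-1}(2st)=2p\,s^{2p-1}t$, I conclude $\text{LHS}\le (s+t)^{2p}-2p\,s^{2p-1}t$.

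To close, I would identify the right-hand side of the lemma with this last expression. By the binomial theorem $(s+t)^{2p}=\sum_{k=0}^{2p}\binom{2p}{k}s^{2p-k}t^{k}$, and removing the $k=1$ term $\binom{2p}{1}s^{2p-1}t=2p\,s^{2p-1}t$ leaves precisely $\sum_{k=0,\,k\ne 1}^{2p}\binom{2p}{k}\|x\|^{2p-k}\|y\|^{k}$, which is the claimed bound. There is no genuinely hard step here; the only point worth flagging is the sign of $u$ for odd $j$, and the uniform estimate $(2u)^{j}\le (2st)^{j}$ dispatches that without any case analysis. Everything else is routine bookkeeping with the multinomial and binomial theorems.
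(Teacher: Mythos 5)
Your proof is correct and follows essentially the same route as the paper's: a termwise Cauchy--Schwarz bound $(2\ps{x}{y})^{j}\le(2\|x\|\|y\|)^{j}$ followed by the multinomial identity $(\|x\|^{2}+2\|x\|\|y\|+\|y\|^{2})^{p}=(\|x\|+\|y\|)^{2p}$. If anything, your handling of the index sets is slightly more robust: by bounding the left-hand side by the full multinomial sum minus the nonnegative $(p-1,1,0)$ term, you sidestep the paper's claimed term-by-term equality between the restricted sums, which depends on a delicate reading of the condition $\{i\neq p-1\}\cap\{j\neq 1\}$.
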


\begin{proof}
Note that
\begin{align}\label{inequality}
 &\sum_{\substack{i+j+k=p \\ \{i\neq p-1\}\cap\{j\neq1\}}}\frac{p!}{i!j!k!}\|x\|^{2i}\big(2\langle x, y \rangle\big)^j \|y\|^{2k}
 &\le
  \sum_{\substack{i+j+k=p \\ \{i\neq p-1\}\cap\{j\neq1\}}}\frac{p!}{i!j!k!}\|x\|^{2i}\big(2\|x\| \|y\|\big)^j \|y\|^{2k}.
\end{align}

Moreover,
\begin{align*}
\sum_{k=0}^{2p}\binom{2p}{k}\|x\|^{2p-k}\|y\|^{k} =& (\|x\|+\|y\|)^{2p} = (\|x\|^2 + 2\|x\|\|y\| + \|y\|^2)^p\\ = &  \sum_{i+j+k=p}\frac{p!}{i!j!k!}\|x\|^{2i}\big(2\|x\| \|y\|\big)^j \|y\|^{2k}.
\end{align*}
Consequently,
\begin{align} \label{equality}
\sum_{\substack{k=0 \\ k\neq 1}}^{2p}\binom{2p}{k}\|x\|^{2p-k}\|y\|^{k} =&   \sum_{\substack{i+j+k=p \\ \{i\neq p-1\}\cap\{j\neq1\}}}\frac{p!}{i!j!k!}\|x\|^{2i}\big(2\|x\| \|y\|\big)^j \|y\|^{2k}.
\end{align}
Thus, in view of \eqref{inequality} and \eqref{equality}, the desired result is obtained.
\end{proof}

\end{document}